\title[Semilinear Hyperbolic SPDEs with polynomially bounded coefficients]{Solution theory to\\
Semilinear Hyperbolic Stochastic Partial Differential Equations\\
with polynomially bounded coefficients
}
\author{Alessia Ascanelli}
\address{Dipartimento di Matematica ed Informatica, Universit\`a di Ferrara, Via Machiavelli n.~30, 44121 Ferrara, Italy}
\email{alessia.ascanelli@unife.it}
\author{Sandro Coriasco}
\address{Dipartimento di Matematica ``G. Peano'', Universit\`a degli Studi di Torino, via Carlo Alberto n.~10, 10123 Torino, Italy}
\email{sandro.coriasco@unito.it}
\author{Andr{\'e} S{\"u}\ss}
\address{C/O Dipartimento di Matematica ed Informatica, Universit\`a di Ferrara, Via Machiavelli n.~30, 44121 Ferrara, Italy}
\email{suess.andre@web.de}
\date{}
\newcommand*{\e}{\mathrm{e}}
\newcommand*{\ii}{\mathrm{i}}
\newcommand*{\scrF}{\ensuremath{\mathscr{F}}}	
\newcommand*{\scrL}{\ensuremath{\mathscr{L}}}	
\newcommand*{\caD}{\ensuremath{\mathcal{D}}}	  
\newcommand*{\caE}{\ensuremath{\mathcal{E}}}		
\newcommand*{\caF}{\ensuremath{\mathcal{F}}}		
\newcommand*{\caH}{\ensuremath{\mathcal{H}}}
\newcommand*{\caS}{\ensuremath{\mathcal{S}}}		
\newcommand*{\caL}{\ensuremath{\mathcal{L}}}		
\newcommand*{\caO}{\ensuremath{\mathcal{O}}}		
\newcommand*{\caM}{\ensuremath{\mathcal{M}}}		
\def\ds{\displaystyle}
\newcommand*{\N}{\mathbb{N}}										
\newcommand*{\R}{\mathbb{R}}										
\newcommand*{\Rd}{{\mathbb{R}^d}}							
\newcommand*{\C}{\mathbb{C}}
\newcommand*{\E}{\mathbb{E}}										
\renewcommand*{\P}{\mathbb{P}}									
\newcommand{\x}{\langle x\rangle}
\newcommand{\csi}{\langle \xi \rangle}
\newcommand{\pdd}{\langle D \rangle}
\newcommand{\jap}{\langle \cdot\rangle}
\def\<{{\langle}}
\def\>{{\rangle}}
\def\perm{{\varpi}}
\newcommand{\Lip}{\mathrm{Lip}}
\newcommand{\Liploc}{\mathrm{Lip}_{\mathrm{loc}}}
\newcommand{\rf}{\mathrm{rf}}
\newcommand{\fv}{\mathrm{fv}}
\numberwithin{equation}{section}
\theoremstyle{plain}
\newtheorem{lemma}{Lemma}[section]
\newtheorem{theorem}[lemma]{Theorem}
\newtheorem{proposition}[lemma]{Proposition}
\newtheorem{corollary}[lemma]{Corollary}
\theoremstyle{definition}
\newtheorem{definition}[lemma]{Definition}
\newtheorem{remark}[lemma]{Remark}
\newtheorem{example}[lemma]{Example}
\newcommand{\afrac}[2]{\genfrac{}{}{0pt}{1}{#1}{#2}}
\newcommand{\beqsn}{\arraycolsep1.5pt\begin{eqnarray*}}
\newcommand{\eeqsn}{\end{eqnarray*}\arraycolsep5pt}
\newcommand{\beqs}{\arraycolsep1.5pt\begin{eqnarray}}
\newcommand{\eeqs}{\end{eqnarray}\arraycolsep5pt}
\definecolor{red}{rgb}{1,0,0}
\def\Op{ {\operatorname{Op}} }
\newcommand{\Ph}{\mathfrak P}
\newcommand{\Phr}{\mathfrak P_\delta}
\def\fy{\varphi}
\newcommand{\vvvert}{\|}
\begin{document}

\begin{abstract}
	We study mild solutions of a class of stochastic partial differential equations, involving operators
	with polynomially bounded coefficients. We consider semilinear equations under suitable hyperbolicity hypotheses on the linear part. We provide conditions on the initial data 
	and on the stochastic terms,
	namely, on the associated spectral measure, so that mild solutions 
	exist and are unique
	in suitably chosen functional classes. More precisely, function-valued solutions are
	obtained, as well as a regularity result.
\end{abstract}

\subjclass[2010]{Primary: 35L10, 60H15; Secondary: 35L40, 35S30}

\keywords{Stochastic partial differential equations; Function-valued solutions; Hyperbolic partial differential equations; Variable coefficients; Fundamental solution; Fourier integral operators}

\maketitle

%
\section{Introduction}\label{sec:intro}
The stochastic partial differential equations (SPDEs in the sequel) that we consider in the present paper are of the general form
\begin{equation}\label{eq:SPDE}
  L(t,x, \partial_t,\partial_x)u(t,x) = \gamma(t,x, u(t,x)) + \sigma(t,x, u(t,x))\dot{\Xi}(t,x),
\end{equation}
where $L$ is a linear partial differential operator that contains derivatives with respect to time ($t\in\R$) and space 
($x\in\R^d$, $d\geq 1$) variables, 
$\gamma,\ \sigma$ are real-valued functions, subject to certain regularity conditions, $\Xi$ is a random noise term that will be described in detail in Section \ref{sec:stochastics}, and $u$ is an unknown stochastic process  called \emph{solution} of the SPDE. That is, the equations \eqref{eq:SPDE} are semilinear, 
in the sense that the only possible non-linearities are on the right-hand side, and are not present in the operator $L$. In Subsection \ref{subsec:hyp}
below we will describe in more detail the conditions we impose on the operator $L$, the most important one being (a notion of) hyperbolicity.

Since the sample paths of the solution $u$ are in general not in the domain of the operator $L$, in view of the singularity of the random noise, we rewrite \eqref{eq:SPDE} in its corresponding integral (i.e., \textit{weak}) form and look for \emph{mild solutions of \eqref{eq:SPDE}}, that is, stochastic processes $u(t,x)$ satisfying
\begin{equation}\label{eq:mildsolutionSPDE}
\begin{aligned}
  u(t,x) = v_0(t,x)	&+\int_0^t\int_\Rd \Lambda(t,s,x,y)\gamma(s,y, u(s,y))dyds 	
  \\
				&+\int_0^t\int_\Rd \Lambda(t,s,x,y)\sigma(s,y, u(s,y))\dot\Xi(s,y)dyds,
\end{aligned}
\end{equation}
where:
\begin{itemize}
\item[-] $v_0$ is a deterministic term, taking into account the initial conditions; 
\item[-] $\Lambda$ is a suitable kernel, associated with the fundamental solution of the linear partial differential equation 
(linear PDE in the sequel) $Lu=0$;
\item[-] the first integral in \eqref{eq:mildsolutionSPDE} is of deterministic type, while the second is a stochastic integral. 
\end{itemize}The kind of solution $u$ we can construct for equation \eqref{eq:SPDE} depends on the approach we employ to make sense of the stochastic integral appearing in \eqref{eq:mildsolutionSPDE}. Note that both integrals in \eqref{eq:mildsolutionSPDE} contain a slight abuse of notation, 
since $\Lambda(t,s,x,y)$ is, in general, a distribution $(x,y)$. Given the commonly wide usage of such so-called 
\textit{distributional integrals}, we will also often adopt here this notation in the representation of our class of mild solutions to \eqref{eq:SPDE}. 

Classically, there are two main ways to give a meaning to stochastic integrals, like the one appearing in \eqref{eq:mildsolutionSPDE}. 

The first approach, the one we are going to deal with in the present paper, consists in associating a Brownian motion, with values in a Hilbert space, with the random noise. One can then define the stochastic integral as an infinite sum of It\^{o} integrals with respect to one-dimensional Brownian motions. This leads to solutions involving random 
functions taking values in suitable functional spaces, using semigroup theory (see, e.g., \cite{dapratozabczyk} for a detailed treatment). 
To our best knowledge, the most general result of existence and uniqueness of a function-valued solution to hyperbolic SPDEs is given in \cite{peszat}, where the author considers a semilinear stochastic wave equation having a uniformly elliptic second order operator $A$ in place of the Laplacian, with uniformly bounded coefficients depending on $x\in\R^d$, $d\geq 1$. There, sufficient conditions on the stochastic term $\dot\Xi$ and on the coefficients of $A$ are given, in order to find a unique function-valued solution.
In the present paper we show existence and uniqueness of a function-valued solution to a wider class of semilinear hyperbolic SPDEs, with possibly unbounded coefficients depending on $(t,x)\in[0,T]\times\R^d$, $d\geq 1$.

A disadvantage of the {\em function(al-spaces)-valued solution} $u$ of \eqref{eq:SPDE} sketched above is that,
in general, it cannot be evaluated in the spatial argument (usually, it is a random element in the $t$ (that is, \textit{time}) parameter, taking values in a $L^p(\R^d)$-modeled  Hilbert or Banach space).
Then, an alternative approach focuses instead on the concept of stochastic integral with respect to a martingale measure. That is, the stochastic integral in \eqref{eq:mildsolutionSPDE} is defined through the martingale measure derived from the random noise $\dot \Xi$. Here one obtains a 
so-called {\em random-field solution}, that is, $u$ is defined as a map associating a random variable to each $(t,x)\in[0,T_0]\times\Rd$, where $T_0>0$ is the time horizon of the equation. For more details, see, e.g., the classical references \cite{conusdalang,dalang,walsh}, and the recent papers \cite{ACSlinear,alessiandre}, where
the existence of a random-field solution in the case of linear hyperbolic SPDEs with $(t,x)$-dependent coefficients has been shown. 
On the other hand, a disadvantage of such {\em random-field solution} $u$ of \eqref{eq:SPDE} is that its construction for non-linear equations is based on the stationarity condition $\Lambda=\Lambda(t-s,x-y)$, which is fulfilled by SPDEs with constant coefficients, but cannot be assumed if we want to deal with more general linear operators $L$ in \eqref{eq:SPDE},
indeed admitting variable coefficients. 
Namely, we have shown in \cite{ACSlinear}
how random-field solutions can be constructed for arbitrary order,
linear (weakly) hyperbolic SPDEs, with (possibly unbounded)
coefficients smoothly depending on $(t,x)\in[0,T]\times\R^d$.

It is remarkable that, in many cases, the theory of integration with respect to processes taking values in functional spaces, and the theory of integration with respect to martingale measures, lead to the same solution $u$ (in some sense) of an SPDE, see \cite{dalangquer} for a precise comparison. We conclude the paper showing a result of that kind, comparing 
the function-valued solutions to \eqref{eq:SPDE} obtained in the present paper, in the special case of the linear equations, with the random-field solutions of the same equation found in \cite{ACSlinear}.

\medskip
We remark that in the present paper, as well as in 
\cite{ACSlinear,alessiandre}, the main tools used to construct and study
the solutions, namely, pseudodifferential and Fourier integral operators, come from microlocal analysis. To our best knowledge, in \cite{alessiandre} their full potential has been rigorously applied for the first time
within the theory of random-field solutions to hyperbolic SPDEs. Other applications of these operators in the context of S(P)DEs can be found in \cite{tindel}, where S(P)DEs are investigated in the framework of function-valued solutions by means of pseudodifferential operators, and in \cite{ObSc14}, where a program for employing Fourier integral operators in stochastic structural analysis is described. We are not aware of any
other systematic application of microlocal and Fourier integral operators
techniques. In particular, concerning the analysis of (weakly) semilinear 
hyperbolic SPDEs with unbounded coefficients, we provide it here. 
As it is customary for the classes of the associated deterministic PDEs,
we are interested in both the smoothness, as well as the decay 
at spatial infinity, of the solutions. Here we prove
an analog of such \textit{global regularity} properties, employing
suitable \textit{weighted Sobolev spaces}, namely, the so-called
Sobolev-Kato spaces.

\subsection{The equations we consider}\label{subsec:hyp}
As mentioned above, we study semilinear SPDEs \eqref{eq:SPDE} whose partial differential operators $L$ have coefficients in 
$(t,x)\in [0,T]\times\R^d$ that may admit a polynomial growth as $|x|\to\infty$. Namely, we treat \textit{hyperbolic equations} of arbitrary order $m\in\N$ of the form \eqref{eq:SPDE}, whose coefficients are defined on the whole space $\Rd$, with
\beqs\label{elle}
L=D_t^m-\displaystyle\sum_{j=1}^m A_j(t,x,D_x)D_t^{m-j},\qquad A_j(t,x,D)=\displaystyle\sum_{|\alpha|\leq j}a_{\alpha j}(t,x)D_x^\alpha,
\eeqs
where $m\geq 1$, $a_{\alpha j}\in C^\infty([0,T], C^\infty(\R^d))$ for $|\alpha|\le j$, $j=0,\dots,m$, and, 
for all $k\in\N_0$, $\beta\in\N_0^d$, there exists a constant $C_{jk\alpha \beta}>0$ such that
\[
	|\partial^k_t\partial^\beta_x a_{\alpha j}(t,x)|\le C_{jk\alpha \beta} \x^{|\alpha|-|\beta|},
\]
for all $(t,x)\in[0,T]\times\R^d$ and $0\leq|\alpha|\leq j$, $1\leq j\leq m$. 
The hyperbolicity of $L$ means that the symbol $\caL_m(t,x,\tau,\xi)$ of the $SG$-principal part of $L$, defined here below, satisfies
\begin{equation}\label{roots}
	\caL_m(t,x,\tau,\xi):= \tau^m-\displaystyle\sum_{j=1}^m\displaystyle\sum_{|\alpha|= j}{a}_{\alpha j}(t,x)\xi^\alpha\tau^{m-j}
	=\prod_{j=1}^m\left(\tau-\tau_j(t,x,\xi)\right),
\end{equation}
with $\tau_j(t,x,\xi)$ real-valued, $\tau_j\in C^\infty([0,T];S^{1,1}(\R^d))$, $j=1,\dots,m$. The latter 
means that, for any $\alpha,\beta\in\N_0^d$, $k\in\N_0$, there exists a constant $C_{jk\alpha\beta}>0$ such that
\[
	|\partial_t^k\partial^\alpha_x\partial^\beta_\xi\tau_j(t,x,\xi)|\le C_{jk\alpha\beta}\x^{1-|\alpha|}\csi^{1-|\beta|},
\]
for $(t,x,\xi)\in[0,T]\times\R^{2d}$, $j=1,\dots,m$ (see Section \ref{sec:fio} below for the definition of the so-called 
$SG$-classes of symbols $S^{m,\mu}(\R^d)$, $(m,\mu)\in\R^2$, and the corresponding class of pseudodifferential operators). 
The real solutions $\tau_j=\tau_j(t,x,\xi)$, $j=1,\dots,m$, of the equation 
$\caL_m(t,x,\tau,\xi)=0$ with respect to $\tau$ are usually called \textit{characteristic roots} of the operator $L$. We will
treat hyperbolic operators of three
different types:
\begin{itemize}
\item[(1)] strictly hyperbolic,
\item [(2)] weakly hyperbolic with roots of constant multiplicities,
\item [(3)] weakly hyperbolic with involutive roots,
\end{itemize}
where $(1)\subset(2)\subset(3)$. Postponing to Definition \ref{3types} in Section \ref{sec:fio} below their precise characterization, 
we give here examples of each one of them.
\begin{example} A simple example of a \textit{strictly hyperbolic operator $L$} is given by the so-called \textit{SG-wave operator}
\[L=D_t^2-\x^2 \langle D\rangle^2=D_t^2-(1+|x|^2)(1-\Delta_x)=-\partial_t^2+(1+|x|^2)\Delta_x-(1+|x|^2),\qquad x\in\R^d,\] 
having symbol $L(x,\tau,\xi)=L_2(x,\tau,\xi)=\tau^2-\x^2\csi^2$ and roots $\tau_{\pm}(x,\xi)=\pm\x\csi$, which are real, \textit{distinct}, and \textit{separated} at every point of $[0,T]\times\R^{2d}$, as well as \textit{at infinity}. 
Notice that $L=-\square_g-(1+|x|^2)$ is a wave operator, associated with a fixed, 
suitable Riemannian metric $g$ on $\R^d$, perturbed by a polynomially growing potential, cfr. \cite{CJT4}.
\end{example}
\begin{example}
An example of a \textit{weakly hyperbolic operator $L$ with roots of constant multiplicities} is given by
\[
	L=(D_t^2-\x^2\langle D\rangle^2)^2=D_t^4-2\x^2\langle D\rangle^2 D_t^2+\x^4\langle D\rangle^4+\Op(p),
	\qquad x\in\R^d,
\]
$p\in S^{3,3}(\R^d)$, where, for $c\in S^{m,\mu}(\R^d)$, $\Op(c)$ denotes the pseudodifferential
operator with symbol $c$, see Section \ref{sec:fio}. The $SG$-principal symbol of $L$ is here
$L_4(x,\tau,\xi)=(\tau^2-\x^2\csi^2)^2$, with \textit{separated} roots $\tau_{\pm}(x,\xi)=\pm\x\csi$, both of \textit{multiplicity 2}.
\end{example}
\begin{example}\label{ex:1.3}
An example of a \textit{weakly hyperbolic operator $L$ with involutive roots of non-constant multiplicities} is given by
\[
	L=(D_t+tD_{x_1}+D_{x_2})(D_t-(t-2x_2)D_{x_1}), \qquad x\in\R^2,
\]
see \cite{ACSlinear,Morimoto:1}.
\end{example}

\subsection{The results we get}
We consider the SPDE \eqref{eq:SPDE}
with $L$ as in \eqref{elle}, \eqref{roots} and $\Xi$ an 
$\mathcal S'(\R^d)$-valued Gaussian process with correlation measure $\Gamma$ and spectral measure $\mu$, see Subsection \ref{subnoise} for a precise definition. We derive conditions on the coefficients of $L$, on the right-hand side terms $\gamma$ and $\sigma$, and on the spectral measure $\mu$ 
(hence, on $\Xi$), such that there exists a unique function-valued (mild) solution to the corresponding Cauchy problem. 
Namely, we are going to prove that
\begin{enumerate}[label=\bfseries (H\arabic{enumi}),ref=\bfseries (H\arabic{enumi})]
	\item\label{hyp:st} if $L$ is strictly hyperbolic, and 
	$\ds\sup_{\eta\in\R^d}\ds\int_{\R^d}\frac{1}{(1+|\xi+\eta|^2)^{m-1}}\mu(d\xi)<\infty$, or
	\item\label{hyp:cm} if $L$ is weakly hyperbolic with constant multiplicities, and  
	$\ds\sup_{\eta\in\R^d}\int_{\R^d}\frac{1}{(1+|\xi+\eta|^2)^{m-l}}\mu(d\xi)<\infty$, 
	where $l$ is the maximum multiplicity of the characteristic roots,
\end{enumerate}
then, under suitable assumptions on $\gamma, \sigma$ and the Cauchy data, there exists a unique function-valued solution to \eqref{eq:SPDE}. Notice that the more general are the assumptions on $L$ (i.e., the larger is $l$), the smallest is the class of the stochastic noises that we can allow to get a function-valued solution. 

The main result of the paper is Theorem \ref{thm:semilinearweak} below, which is stated directly under the more general assumption \ref{hyp:cm}.
This is done in order to keep the paper within a reasonable length, recalling that
assumption \ref{hyp:st} is equivalent to \ref{hyp:cm} with $l=1$.
Theorem \ref{thm:semilinearweak} extends the results of \cite{peszat} to the case of general higher order hyperbolic equations with coefficients in $(t,x)$, not uniformly bounded with respect to $x$ and with roots that may coincide.

We will also formulate an expected result concerning the case of involutive roots, namely:
\begin{enumerate}[label=\bfseries (H\arabic{enumi}),ref=\bfseries (H\arabic{enumi})]
	\setcounter{enumi}{2}
	\item\label{hyp:in} if $L$ is weakly hyperbolic with involutive roots and
	$\displaystyle\int_{\R^d}\mu(d\xi)<\infty$,
\end{enumerate}
then, under suitable assumptions on $\gamma, \sigma$ and the Cauchy data, there exists a unique function-valued solution to \eqref{eq:SPDE}. Notice that the condition to be satisfied in \ref{hyp:in} corresponds to the \textit{limit case} $l=m$ of 
\ref{hyp:cm}. Notice also that all the three conditions coincide when $m=1$.

Here below we give two examples of diffusion coefficients $\sigma$ that we can allow, postponing to Section \ref{sec:nonlin} the precise description of the assumptions on $\sigma$.

\begin{example}
	Let $\sigma(t, x, u)=u^2$. Then, $\sigma$ is an admissible non-linearity
	for the equations we consider. More generally, we allow
	$\sigma(t, x, u)=u^n$, $n\in\N$, $n>2$.
\end{example}

\begin{example}
	A right-hand side explicitly depending on $(t,x)\in[0,T]\times\R^d$ 
	and $u$, which
	is admissible for the equations we consider, is
	\beqs\label{expower}
		\sigma(t,x,u)=\langle x\rangle^{l-m}\cdot\widetilde{\sigma}(t,u),
	\eeqs
	where $l$ is the maximum multiplicity of the roots and $\widetilde{\sigma}$ is regular in time, satisfies suitable
	mapping properties with respect to the Sobolev-Kato spaces,
	and is (uniformly, locally) Lipschitz-continuous with
	respect to the second variable,
	see Definition \ref{def:lip} and Example \ref{ex:sigmanonlin}
	below for the precise conditions.
	
	To our best knowledge, a diffusion coefficient of the rather
	general form \eqref{expower} has never been sistematically treated
	in the literature,
	except in \cite{sanzvuillermot}, where, for $m=2$,
	it has been incorporated in a certain model equation by means
	of ad-hoc techniques.
\end{example}

\begin{example}
	More generally, a routine extension of
	the theory developed in the present paper allows for a stochastic 
	term of the very general form 
	$$\sigma(t,x,u,D_xu,\ldots,D_x^\alpha u),\qquad|\alpha|\leq m-1$$
	in the right-hand side of \eqref{eq:SPDE}. The only difference consists
	in the form of the lipschitzianity assumptions and the corresponding 
	mapping properties, see Section 
	\ref{sec:nonlin} below.
\end{example}

\subsection{Tools we employ}
The main tools for proving existence and uniqueness of solutions to \eqref{eq:SPDE} will be Fourier integral operators with symbols in the so-called $SG$ classes. Such symbols classes have been introduced in the '70s by H.O. Cordes (see, e.g. \cite{cordes}) and C. Parenti \cite{PA72} (see also R. Melrose \cite{ME}). To construct the fundamental solution of \eqref{eq:SPDE} we will need, on one hand, to perform compositions between pseudo-differential operators and Fourier integral operators of $SG$ type, using the theory developed in \cite{coriasco99}, and, on the other hand, compositions between Fourier integral operators of $SG$ type with possibly different phase functions. The latter can be achieved using the composition results recently obtained in \cite{AleSandro}, with the aim of applying them in the present paper. The paper \cite{AleSandro} is quite technical, so here we will only recall and make use of the main composition theorems coming from the theory developed there.
 
The proof of the main Theorem \ref{thm:semilinearweak} of this paper follows an approach similar to the one adopted for the applications treated in 
\cite{AleSandro,Coriasco:998.2,Coriasco:998.3}. 
This consists in the reduction of equation \eqref{eq:SPDE} to a corresponding first order system, 
by an appropriate change of the unknown, then in the construction of the fundamental solution 
for the system, subsequently in coming back to the (formal) solution of the original equation \eqref{eq:SPDE}, and finally in the application of a fixed point scheme in suitable functional spaces. The associated system inherits the regularity of the coefficients, so, in the situation examined in the present paper, 
at worst we are going to obtain hyperbolic first order systems with distinct and separated eigenvalues, of constant multiplicities, or
involutive, cf. \cite{alessiandre,kumano-go}.

\subsection{Organization of the paper}
To provide a presentation of our results as
self-contained as possible, for the convenience 
of the reader, we provide (at different levels of detail)
various preliminaries from the existing literature, 
as described below.

In Section \ref{sec:stochastics} we recall some notions about stochastic integration with respect to Hilbert space-valued processes and the corresponding  concept of function-valued solution, following \cite{dapratozabczyk}.

In Section \ref{sec:fio} we give a description of the tools coming from microlocal analysis that we use for the construction of the fundamental solution to an hyperbolic first order system with polynomially bounded coefficients.  
The results presented in this section come mainly from \cite{ACSlinear, AleSandro,cordes,coriasco99,Coriasco:998.2,Coriasco:998.3}. 
We give a summary of them here, for the convenience of the reader.

In Section \ref{sec:nonlin} we focus on the semilinear hyperbolic SPDE \eqref{eq:SPDE}, \eqref{elle}, \eqref{roots}, and in Theorem \ref{thm:semilinearweak} we study existence and uniqueness of a function-valued solution under the assumption of weak hyperbolicity with roots of constant multiplicity \eqref{def:constmult}. The case of strict hyperbolicity  \eqref{def:strict} reduces to the special case $l=1$ of Theorem \ref{thm:semilinearweak}.
Moreover we recall the construction of the equivalent system performed in \cite{ACSlinear,AleSandro,Coriasco:998.2,Coriasco:998.3} and of its fundamental solution. The latter are crucial results, since all the three classes of hyperbolic equations we are going to consider can be reduced to a first order hyperbolic system.
We give sufficient conditions on the coefficients, on the noise and on the right-hand side of \eqref{eq:SPDE} such that there exists a unique mild function-valued solution of the corresponding Cauchy problem. The key result to achieve existence and uniqueness of the solution is Lemma \ref{lem:weightedpesz}, which is a further main result in the present paper.
We also state and comment a theorem concerning  similar results under the assumption of weak hyperbolicity with involutive roots \eqref{def:involutive}. Finally, we make
a comparison between the function-valued solutions obtained here, in the
special case of linear equations, with the random-field solutions found
in \cite{ACSlinear}, showing that they actually coincide.

\subsection{Notation}
Throughout this article, we let $\langle a\rangle:=(1+|a|^2)^{1/2}$ for all $a\in\Rd$, and we denote $\N_0:=\N\cup\{0\}$, $\R^d_*:=\Rd\backslash\{0\}$. Also, $\alpha$ and $\beta$ will generally denote multiindeces, with their standard arithmetic o\-pe\-ra\-tions. As usual, we will denote partial derivatives with $\partial$, and set $D=-\ii\partial$, $\ii$ being the imaginary unit, which is convenient when dealing with Fourier transformations. We will denote by $C^m(X)$, 
$C^m_0(X)$, $\caS(X)$, $\caD(X)$, $\caS'(X)$ and $\caD'(X)$, the $m$-times continuously differentiable functions, the $m$-times continuously differentiable functions with compact support, 
the Schwartz functions, the test functions space $C_0^\infty(X)$, the tempered distributions
and the distributions on some finite or infinite-dimensional space $X$, respectively. 
Usually, $C>0$ will denote a generic constant, whose value can change from line to line without further notice. 
When operator composition is considered, we will usually insert the symbol $\circ$ when the notation $\Op(b)$ and/or $\Op_\varphi(a)$,
for pseudodifferential and Fourier integral operators, respectively, are adopted for both factors, as well as in some situations where 
parameter-dependent operators occurs, for the sake of clarity. When at least one of the 
operators involved in the product of composition is denoted by a single capital letter, and when no confusion can occur, 
we will, as customary, omit the symbol $\circ$ completely, and just write, e.g., $PQ$, $RD_t$, etc. 
Finally, $A\asymp B$ means that the estimates $A\lesssim B$ and $B\lesssim A$ hold true,
where $A\lesssim B$ means that $|A|\le c\cdot |B|$, for a suitable constant $c>0$.

\section*{Acknowedgements}
The authors have been supported by the INdAM-GNAMPA grant 2014
``Equazioni Differenziali a Derivate Parziali di Evoluzione e Stocastiche'' (Coordinator: S. Coriasco, Dep. of Mathematics ``G. Peano'', University of Turin) and by the INdAM-GNAMPA grant 2015 ``Equazioni Differenziali a Derivate Parziali di Evoluzione e Stocastiche'' (Coordinator: A. Ascanelli, Dep. of Mathematics and Computer Science, University of Ferrara). The third author has been partially supported by the grant MTM 2015-65092-P by the Secretaria de estado de investigaci\'on, desarrollo e innovaci\'on, Ministerio de Econom\'ia y Competitividad, Espa$\rm{\tilde n}$a.

Thanks are due, for very useful discussions and observations,
to 
Tobias Hartung, 
Michael Oberguggenberger, Stevan Pilipovi\'c, Enrico Priola, 
Dora Sele\v{s}i, and Ingo Witt.

%
\section{Stochastic integration.}\label{sec:stochastics}
The mild formulation \eqref{eq:mildsolutionSPDE} is the way in which we understand the SPDE \eqref{eq:SPDE}.
In fact, we call \textit{(mild) function-valued solution to \eqref{eq:SPDE}} an $L^2(\Omega)$-family of random variables $u(t,x)$, $(t,x)\in[0,T]\times\R^d$, jointly measurable, satisfying the stochastic integral equation \eqref{eq:mildsolutionSPDE} where the last term in
the right-hand side is understood within the theory of
stochastic integrals taking value in Hilbert spaces, which we briefly now recall.

\subsection{The stochastic noise}\label{subnoise}
Here we describe the class of stochastic noises that we allow in our framework.
Consider a distribution-valued 
Gaussian process $\{\Xi(\phi);\; \phi\in\mathcal{C}_0^\infty(\mathbb{R}_+\times\Rd)\}$ on a complete probability space $(\Omega, \scrF, \P)$,
with mean zero and covariance functional given by
\begin{equation}
	\E[\Xi(\phi)\Xi(\psi)] = \int_0^\infty\int_\Rd \big(\phi(t)\ast\tilde{\psi}(t)\big)(x)\,\Gamma(dx) dt,
	\label{eq:correlation}
\end{equation}
where $\widetilde{\psi}(t,x) := \psi(t,-x)$, $\ast$ is the convolution operator and $\Gamma$ is a nonnegative, nonnegative definite, tempered measure on $\Rd$.
Then, Th\'{e}or\`{e}me XVIII in \cite[Chapter VII]{schwartz} implies that there exists a nonnegative tempered measure $\mu$ on $\Rd$ such that $\caF\mu = \widehat{\mu}=\Gamma$. $\caF$ and $\widehat{\phantom{\mu}}$ denote the Fourier transform given, for functions $f\in L^1(\Rd)$, by
\begin{equation}\label{eq:definitionfouriertransform}
	(\caF f)(\xi) = \widehat{f}(\xi) := \int_\Rd \e^{-\ii x\cdot\xi}f(x)dx.
\end{equation}
In \eqref{eq:definitionfouriertransform}, $x\cdot\xi$ denotes the inner product in $\Rd$, and the Fourier transform is extended to tempered distributions $T\in\caS'(\Rd)$ by the relation
$\langle \caF T,\phi\rangle = \langle T, \caF\phi\rangle,$ for all $\phi\in\caS(\Rd)$. By Parseval's identity, the right-hand side of \eqref{eq:correlation} can be rewritten as
\begin{equation*}
	\E[\Xi(\phi)\Xi(\psi)] = \int_0^{\infty}\int_{\Rd}[\caF\phi(t)](\xi)\
	\cdot
	\overline{[\caF\psi(t)](\xi)}\,\mu(d\xi) dt.
\end{equation*}
The tempered measure $\Gamma$ is usually called \emph{correlation measure}. The tempered measure $\mu$ such that $\Gamma=\widehat\mu$ is usually called \emph{spectral measure}.


\subsection{Stochastic integral in Hilbert spaces}\label{subH}
In this subsection we recall some of the main results of the theory of stochastic integration with respect to cylindrical Wiener processes. Also, we recall the definition of the Hilbert space $\mathcal H$ which will be suitable for our purposes of function-valued solutions to SPDEs. For the latter, we follow the exposition in \cite{dalangquer}.

\begin{definition}\label{cWp} 
  Let $Q$ be a self-adjoint, nonnegative definite and bounded linear operator on a separable Hilbert space $H$. An $H$-valued stochastic process $W = (W_t(h); h\in H, t\geq0)$ is called a {\em cylindrical Wiener process on $H$} on the complete probability space $(\Omega,\scrF,\P)$ if the following conditions are fulfilled:
  \begin{enumerate}
    \item for any $h\in H$, $(W_t(h); t\geq0)$ is a one-dimensional Brownian motion with variance $t\langle Qh,h\rangle_H$;
    \item for all $s,t\geq0$ and $g,h\in H$,
    \[ \E[W_s(g)W_t(h)] = (s\wedge t)\langle Qg,h\rangle_H. \]
  \end{enumerate}
  If $Q=Id_H$, then $W$ is called the standard cylindrical Wiener process.
\end{definition}

Let $\scrF_t$ be the $\sigma$-field generated by the random variables $(W_t(h); 0\leq s\leq t, h\in H)$ and the $\P$-null sets. The predictable $\sigma$-field is then the $\sigma$-field in $[0,T]\times\Omega$ generated by the sets $\{(s,t]\times A, A\in\scrF_t, 0\leq s<t\leq T\}$.

We define $H_Q$ to be the completion of the Hilbert space $H$ endowed with the inner product
\[ \langle g,h\rangle_{H_Q} := \langle Qg,h\rangle_H, \]
for $g,h\in H$. In the sequel, we let $(v_k)_{k\in\N}$ be a complete orthonormal basis of $H_Q$. Then, the stochastic integral of a predictable, square-integrable stochastic process with values in $H_Q$, $u\in L^2([0,T]\times\Omega; H_Q)$, is defined as
\[ \int_0^t u(s)dW_s := \sum_{k\in\N} \langle u,v_k\rangle_{H_Q} dW_s(v_k). \]
In fact, the series in the right-hand side converges in $L^2(\Omega,\scrF,\P)$ and its sum does not depend on the chosen orthonormal system
$(v_k)_{k\in\N}$. Moreover, the It\^o isometry
\[ \E\bigg[\bigg(\int_0^t u(s)dW_s\bigg)^2\bigg] = \E\bigg[\int_0^t \|u(s)\|_{H_Q}^2 ds\bigg] \]
holds true for any $u\in L^2([0,T]\times\Omega;H_Q)$.
For more on one-dimensional integration, see, e.g., \cite{oksendal}.

This notion of stochastic integral can also be extended to operator-valued integrands. Let $U$ be a separable Hilbert space and define $L_2^0 := L_2(H_Q,U)$ the set of Hilbert-Schmidt operators from $H_Q$ to $U$. With this we can define the space of integrable processes (with respect to $W$) as the set of $\scrF$-measureable processes in $L^2([0,T]\times\Omega;L_2^0)$. Since one can identify the Hilbert-Schmidt operators $L_2(H_Q,U)$ with $U\otimes H_Q^*$, one can define the stochastic integral for any $u\in L^2([0,T]\times\Omega;L_2^0)$ coordinatewise in $U$. Moreover, it is possible to establish an It\^o isometry, namely,
\beqs\label{isomhilb}
\E\Bigg[\bigg\|\int_0^t u(s)dW_s\bigg\|_U^2\Bigg] := \int_0^t \E\big[\|u(s)\|_{L_2^0}^2\big] ds.
\eeqs

The stochastic noise introduced in Subsection \ref{subnoise} can be rewritten in terms of a cylindrical Wiener process. The space $\mathcal{C}_0^\infty(\Rd)$, with pre-inner product 
\[ \langle\phi,\psi\rangle_{\caH} = \int_{\Rd}\caF\phi(\xi)\overline{\caF\psi}(\xi)\mu(d\xi),\]
can be completed to
\[ \caH := \overline{\mathcal{C}_0^\infty(\Rd)}^{\langle\cdot,\cdot\rangle_\caH}, \]
see \cite[Lemma 2.4]{dalangquer}. Then, $(\caH; \langle\cdot,\cdot\rangle_{\caH})$ is a real separable Hilbert space. We also set
\[ \caH_T := L^2([0,T];\caH). \] 
Then, \cite[Proposition 2.5]{dalangquer} states the following result.

\begin{proposition}
  For $t\geq0$ and $\phi\in\caH$, set $W_t(\phi) = W(1_{[0,t]}(\cdot)\phi( \cdot))$. Then, the process $W = \{W_t(\phi), t\geq0,\phi\in\caH\}$ is a standard cylindrical Wiener process on $\caH$ (where we
  recall that ``standard'' here means assuming $Q=Id_\caH$).
\end{proposition}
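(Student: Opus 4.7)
The plan is to exploit the Gaussianity of $\Xi$, extend it from $C_0^\infty(\R_+\times\R^d)$ to a Bochner $L^2$-space with values in $\caH$, and then verify the two conditions of Definition \ref{cWp} with $H=\caH$ and $Q=Id_\caH$ directly from the covariance formula.

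First I would apply Parseval's identity to rewrite \eqref{eq:correlation} as
\[
\E[\Xi(\phi)\Xi(\psi)] = \int_0^\infty \langle \phi(t,\cdot),\psi(t,\cdot)\rangle_\caH\,dt,\qquad \phi,\psi\in C_0^\infty(\R_+\times\R^d).
\]
This identifies $\phi\mapsto \Xi(\phi)$ as a linear isometry from $C_0^\infty(\R_+\times\R^d)$, endowed with the norm of $L^2([0,\infty);\caH)$, into $L^2(\Omega,\scrF,\P)$. Since $C_0^\infty(\R^d)$ is dense in $\caH$ by the very construction of $\caH$, and since $\caH$ is separable, a standard Bochner-space approximation shows that $C_0^\infty(\R_+\times\R^d)$ is dense in $L^2([0,\infty);\caH)$. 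The isometry therefore extends uniquely to $L^2([0,\infty);\caH)$, preserving the covariance identity on the extension. For $\phi\in\caH$ and $t\ge 0$, the element $1_{[0,t]}(\cdot)\phi(\cdot)$ belongs to $L^2([0,\infty);\caH)$, so $W_t(\phi)$ is a well-defined, centered Gaussian random variable.

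The covariance structure of $W$ then follows by inserting $1_{[0,s]}\phi$ and $1_{[0,t]}\psi$ into the extended isometry:
\[
\E[W_s(\phi)W_t(\psi)] = \int_0^\infty \langle 1_{[0,s]}(r)\phi,1_{[0,t]}(r)\psi\rangle_\caH\,dr = (s\wedge t)\langle \phi,\psi\rangle_\caH,
\]
which is exactly condition (2) of Definition \ref{cWp} with $Q=Id_\caH$. Specializing to $\phi=\psi$ and $s=t$ yields $\E[W_t(\phi)^2]=t\|\phi\|_\caH^2$. For condition (1), fix $\phi\in\caH$: finite-dimensional marginals of $(W_t(\phi))_{t\ge 0}$ are jointly Gaussian, as $L^2(\Omega)$-limits of finite linear combinations of evaluations of the Gaussian process $\Xi$. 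The increment $W_t(\phi)-W_s(\phi)$ equals the $L^2$-extension of $\Xi$ applied to $1_{(s,t]}(\cdot)\phi(\cdot)$, and its covariance with $W_r(\psi)$ for any $r\le s$ and any $\psi\in\caH$ vanishes because the time supports $1_{(s,t]}$ and $1_{[0,r]}$ are disjoint; since jointly Gaussian uncorrelated random variables are independent, the process has independent increments. Together with the variance identity, this shows that $(W_t(\phi))_{t\ge 0}$ is a Brownian motion of variance $t\|\phi\|_\caH^2$, settling condition (1).

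The main obstacle is the extension step: one must be sure that $C_0^\infty(\R_+\times\R^d)$ is dense in $L^2([0,\infty);\caH)$ for the norm induced by $\mu$, so that the isometry $\phi\mapsto\Xi(\phi)$ passes to the limit and the covariance identity survives. This hinges on the definition of $\caH$ as the completion of $C_0^\infty(\R^d)$ under $\langle\cdot,\cdot\rangle_\caH$ together with its separability, which permits the approximation of Bochner-$L^2$ functions by simple time-functions taking values in a dense subset of $\caH$. Once the extension is in place, the rest is a direct consequence of Gaussianity and the isometric identity for the covariance.
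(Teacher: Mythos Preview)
The paper does not prove this proposition at all; it simply quotes it as \cite[Proposition 2.5]{dalangquer}. Your argument is correct and is essentially the standard proof one finds in that reference: rewrite the covariance via Parseval as an $L^2([0,\infty);\caH)$ inner product, extend $\Xi$ by density and isometry, and then read off conditions (1) and (2) of Definition~\ref{cWp} from the resulting identity $\E[W_s(\phi)W_t(\psi)]=(s\wedge t)\langle\phi,\psi\rangle_\caH$.

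One small point you gloss over: Definition~\ref{cWp}(1) asks for a one-dimensional Brownian motion, which is usually understood to include path continuity. You establish that $(W_t(\phi))_{t\ge0}$ is a centered Gaussian process with independent, stationary increments and variance $t\|\phi\|_\caH^2$; strictly speaking one then invokes Kolmogorov's continuity theorem (or the existence of a continuous modification of such a process) to obtain an actual Brownian motion. This is routine and does not affect the substance of your argument.
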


%
\section{Microlocal analysis and fundamental solution to first order hyperbolic systems with polynomially bounded coefficients}\label{sec:fio}

We first recall some basic definitions and facts about the so-called $SG$-calculus of pseudodifferential and Fourier integral operators, through
standard material appeared, e.g., in \cite{AleSandro} and elsewhere (sometimes with slightly different notational choices).

The class $S ^{m,\mu}=S ^{m,\mu}(\R^{d})$ of $SG$ symbols of order $(m,\mu) \in \R^2$ is given by all the functions 
$a(x,\xi) \in C^\infty(\R^d\times\R^d)$
with the property
that, for any multiindices $\alpha,\beta \in \N_0^d$, there exist
constants $C_{\alpha\beta}>0$ such that the conditions 
\begin{equation}
	\label{eq:disSG}
	|D_x^{\alpha} D_\xi^{\beta} a(x, \xi)| \leq C_{\alpha\beta} 
	\x^{m-|\alpha|}\csi^{\mu-|\beta|},
	\qquad (x, \xi) \in \R^d \times \R^d,
\end{equation}
hold, see, e.g., \cite{cordes,ME,PA72} for details. For $m,\mu\in\R$, $\ell\in\N_0$, $a\in\ S^{m,\mu}$, the quantities
\begin{equation}\label{seminorms}
	\vvvert a \vvvert^{m,\mu}_\ell
	= 
	\max_{|\alpha+\beta|\le \ell}\sup_{x,\xi\in\R^d}\x^{-m+|\alpha|} 
	                                                                     \csi^{-\mu+|\beta|}
	                                                                    | \partial^\alpha_x\partial^\beta_\xi a(x,\xi)|
\end{equation}
are a family of seminorms, defining  the Fr\'echet topology of $S^{m,\mu}$.

The corresponding
classes of pseudodifferential operators $\Op (S ^{m,\mu})=\Op (S ^{m,\mu}(\R^d))$ are given by
\begin{equation}\label{eq:psidos}
	(\Op(a)u)(x)=(a(.,D)u)(x)=(2\pi)^{-d}\int e^{\ii x\xi}a(x,\xi)\hat{u}(\xi)d\xi, \quad a\in S^{m,\mu}(\R^d),u\in\caS(\R^d),
\end{equation}
extended by duality to $\caS^\prime(\R^d)$.
The operators in \eqref{eq:psidos} form a
graded algebra with respect to composition, i.e.,
$$
\Op (S ^{m_1,\mu _1})\circ \Op (S ^{m_2,\mu _2})
\subseteq \Op (S ^{m_1+m_2,\mu _1+\mu _2}).
$$
The symbol $c\in S ^{m_1+m_2,\mu _1+\mu _2}$ of the composed operator $\Op(a)\circ\Op(b)$,
$a\in S ^{m_1,\mu _1}$, $b\in S ^{m_2,\mu _2}$, admits the asymptotic expansion
\begin{equation}
	\label{eq:comp}
	c(x,\xi)\sim \sum_{\alpha}\frac{i^{|\alpha|}}{\alpha!}\,D^\alpha_\xi a(x,\xi)\, D^\alpha_x b(x,\xi),
\end{equation}
which implies that the symbol $c$ equals $a\cdot b$ modulo $S ^{m_1+m_2-1,\mu _1+\mu _2-1}$.

The residual elements of the calculus are operators with symbols in
\[
	 S ^{-\infty,-\infty}=S ^{-\infty,-\infty}(\R^{d})= \bigcap_{(m,\mu) \in \R^2} S ^{m,\mu} (\R^{d})
	 =\caS(\R^{2d}),
\]
that is, those having kernel in $\caS(\R^{2d})$, continuously
mapping  $\caS^\prime(\R^d)$ to $\caS(\R^d)$. For any $a\in S^{m,\mu}$, $(m,\mu)\in\R^2$,
$\Op(a)$ is a linear continuous operator from $\caS(\R^d)$ to itself, extending to a linear
continuous operator from $\caS^\prime(\R^d)$ to itself, and from
$H^{z,\zeta}(\R^d)$ to $H^{z-m,\zeta-\mu}(\R^d)$,
where $H^{z,\zeta}(\R^d)$,
$(z,\zeta) \in \R^2$, denotes the  Sobolev-Kato (or \textit{weighted Sobolev}) space
\begin{equation}\label{eq:skspace}
  	H^{z,\zeta}(\R^d)= \{u \in \caS^\prime(\R^{n}) \colon \|u\|_{z,\zeta}=
	\|{\jap}^z\pdd^\zeta u\|_{L^2}< \infty\},
\end{equation}
with the naturally induced Hilbert norm. When $z\ge z^\prime$ and $\zeta\ge\zeta^\prime$, the continuous embedding 
$H^{z,\zeta}\hookrightarrow H^{z^\prime,\zeta^\prime}$ holds true. It is compact when $z>z^\prime$ and $\zeta>\zeta^\prime$.
Since $H^{z,\zeta}=\jap^z\,H^{0,\zeta}=\jap^z\, H^\zeta$, with $H^\zeta$ the usual Sobolev space of order $\zeta\in\R$, we 
find $\zeta>k+\dfrac{d}{2} \Rightarrow H^{z,\zeta}\hookrightarrow C^k$, $k\in\N_0$.
\begin{remark} Notice that in \cite{peszat} the author uses the space 
\[L^2_\omega:=\{u\in\mathcal S'(\R^d)\vert\ \sqrt\omega u\in L^2(\R^d)\},\]
where $\omega(x)\in\mathcal S(\R^d)$ is a strictly positive even function such that for $|x|\geq 1$ we have $\omega(x)=e^{-|x|}.$ 
The weight $\omega$ can be substituted by $\omega(x)=\x^{-2z},$ $z>0$, with corresponding space 
\[
	L^2_\omega:=\{u\in\mathcal S'(\R^d)\vert\ \x^{-z} u\in L^2(\R^d)\},
\]
coinciding with $H^{-z,0}(\R^d)$ in the notation above. 
In Section \ref{sec:nonlin} we shall use the $H^{z,\zeta}(\R^d)$ spaces to get a function-valued solution to \eqref{eq:SPDE}. 
\end{remark}

One actually finds
\begin{equation}\label{eq:spdecomp}
	\bigcap_{z,\zeta\in\R}H^{z,\zeta}(\R^d)=H^{\infty,\infty}(\R^d)=\caS(\R^d),
	\quad
	\bigcup_{z,\zeta\in\R}H^{z,\zeta}(\R^d)=H^{-\infty,-\infty}(\R^d)=\caS^\prime(\R^d),
\end{equation}
as well as, for the space of \textit{rapidly decreasing distributions}, see \cite{acs,schwartz}, 
\begin{equation}\label{eq:rdd}
	\caS^\prime(\R^d)_\infty=\bigcap_{z\in\R}\bigcup_{\zeta\in\R}H^{z,\zeta}(\R^d).
\end{equation}
The continuity property of
the elements of $\Op(S^{m,\mu})$ on the scale of spaces $H^{z,\zeta}(\R^d)$, $(m,\mu),(z,\zeta)\in\R^2$, is expressed 
more precisely in the next Theorem \ref{thm:sobcont} (see \cite{cordes} and the references quoted therein 
for the result on more general classes of $SG$-symbols).
\begin{theorem}\label{thm:sobcont}
	Let $a\in S^{m,\mu}(\R^d)$, $(m,\mu)\in\R^2$. Then, for any $(z,\zeta)\in\R^2$, 
	$\Op(a)\in\mathcal{L}(H^{z,\zeta}(\R^d),H^{z-m,\zeta-\mu}(\R^d))$, and there exists a constant $C>0$,
	depending only on $d,m,\mu,z,\zeta$, such that
	\begin{equation}\label{eq:normsob}
		\|\Op(a)\|_{\scrL(H^{z,\zeta}(\R^d), H^{z-m,\zeta-\mu}(\R^d))}\le 
		C\vvvert a \vvvert_{\left[\frac{d}{2}\right]+1}^{m,\mu},
	\end{equation}
	where $[t]$ denotes the integer part of $t\in\R$.
\end{theorem}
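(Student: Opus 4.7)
My approach is to split the proof into two reductions. First, I would reduce to the case $(m,\mu)=(z,\zeta)=(0,0)$ by conjugating with the natural order-reducing isomorphisms of the $SG$ scale. Second, I would invoke the classical $L^2$-boundedness theorem for symbols in $S^{0,0}(\R^d)$, with the explicit seminorm estimate. The machinery relies entirely on the $SG$ symbol calculus already recalled above, most notably on the composition asymptotics \eqref{eq:comp} and the structure of $H^{z,\zeta}(\R^d)$ in \eqref{eq:skspace}.

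For the first step, by the very definition \eqref{eq:skspace}, the map $\Lambda_{z,\zeta}: u\mapsto \jap^z\,\pdd^\zeta u$ is an isometric isomorphism from $H^{z,\zeta}(\R^d)$ onto $L^2(\R^d)$. Setting $v=\Lambda_{z,\zeta}u$, so that $\|v\|_{L^2}=\|u\|_{z,\zeta}$, I rewrite
\[
\|\Op(a)u\|_{z-m,\zeta-\mu}=\|\Lambda_{z-m,\zeta-\mu}\circ\Op(a)\circ\Lambda_{z,\zeta}^{-1}\,v\|_{L^2},
\]
so the statement reduces to an $L^2\to L^2$ bound for the conjugated operator. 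The factors $\jap^r$ lie in $\Op(S^{r,0})$, with $\jap^{-r}\in\Op(S^{-r,0})$ as exact (multiplication) inverse, while $\pdd^\rho\in\Op(S^{0,\rho})$ is inverted exactly by $\pdd^{-\rho}$. Hence the composition formula \eqref{eq:comp} shows that the conjugated operator belongs to $\Op(S^{0,0})$, with a symbol $\widetilde a$ satisfying, for each $\ell\in\N_0$, an estimate of the form
\[
\vvvert\widetilde a\vvvert^{0,0}_\ell \le C(d,m,\mu,z,\zeta,\ell)\,\vvvert a\vvvert^{m,\mu}_{\ell+N(d,m,\mu,z,\zeta)},
\]
with constants independent of $a$. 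Choosing $\ell=[d/2]+1$ then tracks the precise seminorm order appearing in \eqref{eq:normsob}.

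For the second step, I would quote the classical Calder\'on--Vaillancourt-type estimate for $SG$-symbols, namely that $\|\Op(b)\|_{\mathcal L(L^2)}\le C(d)\,\vvvert b\vvvert^{0,0}_{[d/2]+1}$ for all $b\in S^{0,0}(\R^d)$, as proved in \cite{cordes}. The proof there proceeds via the Schwartz kernel $K_b(x,y)=(2\pi)^{-d}\int e^{\ii(x-y)\xi}b(x,\xi)\,d\xi$, where integration by parts in $\xi$ (after a regularization cutoff $\chi(\varepsilon\xi)$ to legitimize the oscillatory integral) and the Sobolev embedding $H^{[d/2]+1}(\R^d)\hookrightarrow L^\infty(\R^d)$ combine to yield the Schur-type bound with exactly that exponent. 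The main obstacle I anticipate lies in the first step rather than the second: the asymptotic expansion \eqref{eq:comp} has to be truncated at a finite order, and the symbol remainder must be controlled uniformly in $a$ by a seminorm of fixed order. Performing this bookkeeping carefully is what allows the constants in \eqref{eq:normsob} to depend only on $d,m,\mu,z,\zeta$ and not on higher-order information on $a$. Since both steps are by now routine in the $SG$-calculus, the natural course is to assemble them from \cite{cordes,coriasco99}, as the statement suggests.
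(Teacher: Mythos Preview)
The paper does not actually prove this theorem; it is quoted from \cite{cordes}. Your two-step strategy---reduce to $S^{0,0}$ by conjugating with the isometries $\Lambda_{z,\zeta}=\jap^z\pdd^\zeta$, then apply a Calder\'on--Vaillancourt-type bound on $L^2$---is the standard route and is essentially how the cited reference proceeds.

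There is, however, a slip in your seminorm tracking. You write that the conjugated symbol $\widetilde a$ satisfies $\vvvert\widetilde a\vvvert^{0,0}_\ell \le C\,\vvvert a\vvvert^{m,\mu}_{\ell+N}$ with a shift $N>0$, and this is correct: two of the four factors in the conjugation (the left Fourier multiplier $\pdd^{\zeta-\mu}$ and the right multiplication by $\jap^{-z}$) are not exact symbol multiplications and genuinely require the asymptotic expansion \eqref{eq:comp}, whose remainder estimates cost extra derivatives of $a$. But then ``choosing $\ell=[d/2]+1$'' produces a bound by $\vvvert a\vvvert^{m,\mu}_{[d/2]+1+N}$, not by $\vvvert a\vvvert^{m,\mu}_{[d/2]+1}$ as asserted in \eqref{eq:normsob}. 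Your argument therefore delivers the qualitative continuity statement with \emph{some} finite seminorm index depending only on $d,m,\mu,z,\zeta$---which is all that is ever used later in the paper---but not the sharp index $[d/2]+1$. Recovering that index requires bypassing the symbolic composition, for instance by estimating the Schwartz kernel of the conjugated operator directly via integration by parts and a Schur-type test, as in \cite{cordes}.
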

Cordes introduced the class $\caO(m,\mu)$ of the \textit{operators of order $(m,\mu)$} as follows, see, e.g., \cite{cordes}.
\begin{definition}\label{def:ordmmuopr}
	A linear continuous operator $A\colon\caS(\R^d)\to\caS(\R^d)$
	belongs to the class $\caO(m,\mu)$, $(m,\mu)\in\R^2$, of the operators of order $(m,\mu)$ if, for any $(z,\zeta)\in\R^2$,
	it extends to a linear continuous operator $A_{z,\zeta}\colon H^{z,\zeta}(\R^d)\to H^{z-m,\zeta-\mu}(\R^d)$. We also define
	\[
		\caO(\infty,\infty)=\bigcup_{(m,\mu)\in\R^2} \caO(m,\mu), \quad
		\caO(-\infty,-\infty)=\bigcap_{(m,\mu)\in\R^2} \caO(m,\mu).		
	\]
\end{definition}
\begin{remark}\label{rem:O}
	\begin{enumerate}
		\item Trivially, any $A\in\caO(m,\mu)$ admits a linear continuous extension 
		$A_{\infty,\infty}\colon\caS^\prime(\R^d)\to\caS^\prime(\R^d)$. In fact, in view of \eqref{eq:spdecomp}, it is enough to set
		$A_{\infty,\infty}|_{H^{z,\zeta}(\R^d)}= A_{z,\zeta}$.
		\item Theorem \ref{thm:sobcont} implies $\Op(S^{m,\mu}(\R^d))\subset\caO(m,\mu)$, $(m,\mu)\in\R^2$.
		\item $\caO(\infty,\infty)$ and $\caO(0,0)$ are algebras under operator multiplication, $\caO(-\infty,-\infty)$ is an ideal
		of both  $\caO(\infty,\infty)$ and $\caO(0,0)$, and 
		$\caO(m_1,\mu_1)\circ\caO(m_2,\mu_2)\subset\caO(m_1+m_2,\mu_1+\mu_2)$.
	\end{enumerate}
\end{remark}
\noindent
The following characterization of the class $\caO(-\infty,-\infty)$ is often useful, see \cite{cordes}.
\begin{theorem}\label{thm:smoothing}
	The class $\caO(-\infty,-\infty)$ coincides with $\Op(S^{-\infty,-\infty}(\R^d))$ and with the class of smoothing operators,
	that is, the set of all the linear continuous operators $A\colon\caS^\prime(\R^d)\to\caS(\R^d)$. All of them coincide with the
	class of linear continuous operators $A$ admitting a Schwartz kernel $k_A$ belonging to $\caS(\R^{2d})$. 
\end{theorem}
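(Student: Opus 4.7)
The plan is to establish the claim through a circular chain of inclusions
\[
\Op(S^{-\infty,-\infty}(\R^d))\subseteq \caO(-\infty,-\infty)\subseteq \scrL(\caS^\prime(\R^d),\caS(\R^d))\subseteq \{A:k_A\in\caS(\R^{2d})\}\subseteq \Op(S^{-\infty,-\infty}(\R^d)),
\]
where $\scrL(\caS^\prime(\R^d),\caS(\R^d))$ denotes the space of continuous linear maps $A\colon\caS^\prime(\R^d)\to\caS(\R^d)$, i.e., the smoothing operators.

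The first inclusion is immediate from Remark \ref{rem:O}(2), since $S^{-\infty,-\infty}(\R^d)=\bigcap_{(m,\mu)\in\R^2}S^{m,\mu}(\R^d)$ is contained in every $S^{m,\mu}(\R^d)$, and thus any operator of the form $\Op(a)$ with $a\in S^{-\infty,-\infty}$ belongs to $\caO(m,\mu)$ for every $(m,\mu)\in\R^2$. For the second inclusion, let $A\in\caO(-\infty,-\infty)$ and pick any $u\in\caS^\prime(\R^d)$. By \eqref{eq:spdecomp}, $u\in H^{z_0,\zeta_0}(\R^d)$ for some $(z_0,\zeta_0)\in\R^2$. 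Since $A\in\caO(m,\mu)$ for every $(m,\mu)\in\R^2$, it maps $H^{z_0,\zeta_0}(\R^d)$ continuously into $H^{z_0-m,\zeta_0-\mu}(\R^d)$, hence, letting $m,\mu\to-\infty$ and intersecting, into $\bigcap_{z,\zeta\in\R}H^{z,\zeta}(\R^d)=\caS(\R^d)$ by \eqref{eq:spdecomp} again. The joint continuity $\caS^\prime\to\caS$ follows from the continuity of every $A_{z,\zeta}$ together with the inductive/projective limit topologies of $\caS^\prime$ and $\caS$ given by \eqref{eq:spdecomp}.

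The third inclusion is the delicate one. Given $A\in\scrL(\caS^\prime(\R^d),\caS(\R^d))$, Schwartz's kernel theorem applied to its restriction $A|_{\caS}\colon\caS(\R^d)\to\caS(\R^d)\hookrightarrow\caS^\prime(\R^d)$ produces a tempered distribution $k_A\in\caS^\prime(\R^{2d})$. The key observation is that the stronger continuity $\caS^\prime\to\caS$ forces $k_A$ to actually belong to $\caS(\R^{2d})$. One way to see this is via nuclearity of $\caS$: the natural identification
$$\scrL(\caS^\prime(\R^d),\caS(\R^d))\cong\caS(\R^d)\,\widehat{\otimes}\,\caS(\R^d)\cong\caS(\R^{2d})$$
(using reflexivity of $\caS$ and the kernel isomorphism $\caS(\R^d)\,\widehat{\otimes}\,\caS(\R^d)\cong\caS(\R^{2d})$) precisely associates to $A$ its Schwartz kernel, which thus lies in $\caS(\R^{2d})$. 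Alternatively, one can test $k_A$ against products $\phi(x)\psi(y)$ with $\phi,\psi\in\caS$ and use seminorm estimates for $A\colon\caS^\prime\to\caS$ to bound all weighted derivatives of $k_A$ in $L^\infty$.

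Finally, for the fourth inclusion, let $A$ have kernel $k_A\in\caS(\R^{2d})$, and define
\[
a(x,\xi):=\int_{\R^d}\e^{-\ii z\cdot\xi}\,k_A(x,x+z)\,dz.
\]
Since $(x,z)\mapsto k_A(x,x+z)$ belongs to $\caS(\R^{2d})$ (a linear change of coordinates preserves the Schwartz class), partial Fourier transformation in $z$ keeps the result in $\caS(\R^{2d})=S^{-\infty,-\infty}(\R^d)$. A direct computation, using Fourier inversion and the representation \eqref{eq:psidos}, then shows that $\Op(a)$ has kernel $(2\pi)^{-d}\int \e^{\ii(x-y)\cdot\xi}a(x,\xi)d\xi=k_A(x,y)$, so $A=\Op(a)\in\Op(S^{-\infty,-\infty}(\R^d))$. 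This closes the loop and yields the three claimed identifications.

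The main obstacle is the third step, where one must upgrade the kernel from $\caS^\prime(\R^{2d})$ to $\caS(\R^{2d})$; once the nuclearity/kernel identification is invoked (or the direct seminorm argument is carried out), the remaining steps are essentially bookkeeping with the Sobolev--Kato scale and Fourier inversion.
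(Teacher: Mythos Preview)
Your argument is correct in outline and follows the standard route; note, however, that the paper does not actually supply a proof of this theorem but cites it from Cordes \cite{cordes}, so there is nothing to compare against in the paper itself.

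One computational slip in your fourth step: with the Fourier convention \eqref{eq:definitionfouriertransform} and the representation \eqref{eq:psidos}, the kernel of $\Op(a)$ is $(2\pi)^{-d}\int \e^{\ii(x-y)\cdot\xi}a(x,\xi)\,d\xi$, so substituting your formula $a(x,\xi)=\int \e^{-\ii z\cdot\xi}k_A(x,x+z)\,dz$ gives back $k_A(x,2x-y)$, not $k_A(x,y)$. The correct inversion is
\[
a(x,\xi)=\int_{\R^d}\e^{-\ii z\cdot\xi}\,k_A(x,x-z)\,dz
\quad\text{(equivalently, }a(x,\xi)=\int_{\R^d}\e^{\ii z\cdot\xi}\,k_A(x,x+z)\,dz\text{)}.
\]
This does not affect the substance of the argument, since either way $a\in\caS(\R^{2d})=S^{-\infty,-\infty}(\R^d)$ and the chain of inclusions closes.
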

An operator $A=\Op(a)$ and its symbol $a\in S ^{m,\mu}$ are called \emph{elliptic}
(or $S ^{m,\mu}$-\emph{elliptic}) if there exists $R\ge0$ such that
\[
	C\x^{m} \csi^{\mu}\le |a(x,\xi)|,\qquad 
	|x|+|\xi|\ge R,
\] 
for some constant $C>0$. If $R=0$, $a^{-1}$ is everywhere well-defined and smooth, and $a^{-1}\in S ^{-m,-\mu}$.
If $R>0$, then $a^{-1}$ can be extended to the whole of $\R^{2d}$ so that the extension $\widetilde{a}_{-1}$ satisfies $\widetilde{a}_{-1}\in S ^{-m,-\mu}$.
An elliptic $SG$ operator $A \in \Op (S ^{m,\mu})$ admits a
parametrix $A_{-1}\in \Op (S ^{-m,-\mu})$ such that
\[
A_{-1}A=I + R_1, \quad AA_{-1}= I+ R_2,
\]
for suitable $R_1, R_2\in\Op(S^{-\infty,-\infty})$, where $I$ denotes the identity operator. 
In such a case, $A$ turns out to be a Fredholm
operator on the scale of functional spaces $H^{z,\zeta}(\R^d)$,
$(z,\zeta)\in\R^2$.

\vspace{3mm}

We now introduce the class of $SG$-phase functions.
\begin{definition}[$SG$-phase function]\label{def:phase}
A real valued function $\varphi\in C^\infty(\R^{2d})$  belongs to the class $\Ph$ of $SG$-phase functions if it satisfies the following conditions:
\begin{enumerate}
\item $\varphi\in S^{1,1}(\R^{d})$;
\item $\<\varphi'_x(x,\xi)\>\asymp\<\xi\>$ as $|(x,\xi)|\to\infty$;
\item $\<\varphi'_\xi(x,\xi)\>\asymp\<x\>$ as $|(x,\xi)|\to\infty$.	
\end{enumerate}
\end{definition}
For any $a\in S^{m,\mu}$, $(m,\mu)\in\R^2$, $\varphi\in\Ph$, the $SG$ FIOs are defined, for $u\in\caS(\R^{n})$, as
\begin{align}
\label{eq:typei}
(\Op _\fy (a)u)(x)&= (2\pi )^{-d}\int
e^{\ii\fy (x,\xi )} a(x,\xi )
\widehat u(\xi )\, d\xi ,
\end{align}
and
\begin{align}
\label{eq:typeii}
(\Op^*_\fy (a)u)(x)&= (2\pi )^{-d}\iint
e^{\ii(x\cdot \xi -\fy (y,\xi ))} \overline {a(y,\xi )} u(y)\, dyd\xi.
\end{align}
Here the operators $\Op _\fy (a)$ and
$\Op _\fy ^*(a)$ are sometimes called $SG$ FIOs
of type I and type II, respectively, with symbol $a$ and
($SG$-)phase function $\fy$. Note that a type II operator satisfies
$\Op^*_\fy(a)=\Op _\fy (a)^*$, that is, it is the formal $L^2$-adjoint of the 
type I operator $\Op_\fy(a)$. 

The analysis of $SG$ FIOs started in 
\cite{coriasco99}, where composition results with the classes of $SG$ pseudodifferential operators,
and of $SG$ FIOs of type I and type II with regular phase functions, have been proved. Also the basic
continuity properties in $\caS(\R^d)$ and $\caS^\prime(\R^d)$ of operators in the class have been proved there, as well
as a version of the Asada-Fujiwara $L^2(\R^d)$-continuity, for operators $\Op_\fy(a)$ 
with symbol $a\in S^{0,0}$ and regular $SG$-phase function $\fy\in\Phr$, see Definition \ref{def:phaser}.
The following theorem summarizes composition results between $SG$ pseudodifferential operators and $SG$ FIOs of type I that we are going to use in the present paper, see \cite{coriasco99} for proofs and composition results with $SG$ FIOs of type II.
\begin{theorem}\label{thm:compi}
Let $\fy\in\Ph$ and assume $b\in S^{m_1,\mu_1}(\R^{d})$, $a\in S^{m_2,\mu_2}(\R^{d})$, $(m_j,\mu_j)\in\R^2$, $j=1,2$. Then,
\begin{align*}
\Op (b)\circ \Op _\fy (a) &= \Op _\fy (c_1+r_1) = \Op _\fy (c_1) \mod \Op (S^{-\infty,-\infty}(\R^{d})),
\\[1ex]
\Op _\fy (a) \circ \Op (b) &= \Op _\fy (c_2+r_2) = \Op _\fy (c_2)  \mod \Op (S^{-\infty,-\infty}(\R^{d})),
\end{align*}
for some $c_j\in S^{m_1+m_2,\mu_1+\mu_2}(\R^{d})$, $r_j\in S^{-\infty,-\infty}(\R^{d})$, $j=1,2$.
\end{theorem}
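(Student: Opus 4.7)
The plan is to reduce both compositions to standard oscillatory-integral computations via the Kuranishi trick, and then verify the bidirectional SG estimates by exploiting the three asymptotic equivalences in Definition \ref{def:phase}. I would start with $\Op(b)\circ\Op_\fy(a)$: for $u\in\caS(\R^d)$, formally exchanging the order of integration in the iterated oscillatory integral yields
\[
(\Op(b)\Op_\fy(a)u)(x) = (2\pi)^{-2d}\iiint e^{i(x-y)\cdot\eta + i\fy(y,\xi)}\, b(x,\eta)\,a(y,\xi)\,\widehat u(\xi)\,d\eta\,dy\,d\xi.
\]
To collapse both exponentials into a single phase $e^{i\fy(x,\xi)}$, apply Taylor's formula to write $\fy(y,\xi)-\fy(x,\xi)=(y-x)\cdot\Phi(x,y,\xi)$ with $\Phi(x,y,\xi) = \int_0^1 \partial_x\fy(x+s(y-x),\xi)\,ds$, and then perform the change of variables $\eta\mapsto\eta+\Phi(x,y,\xi)$. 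Condition (2) of Definition \ref{def:phase} guarantees $\langle\Phi(x,y,\xi)\rangle\asymp\csi$ uniformly, so this change of variables is admissible and preserves the SG structure. The outcome is
\[
(\Op(b)\Op_\fy(a)u)(x) = (2\pi)^{-d}\int e^{i\fy(x,\xi)} c_1(x,\xi)\,\widehat u(\xi)\,d\xi,
\]
where $c_1(x,\xi)$ is an oscillatory integral in $(y,\eta)$ of the amplitude $b(x,\eta+\Phi(x,y,\xi))\,a(y,\xi)$.

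The next step is to show $c_1\in S^{m_1+m_2,\mu_1+\mu_2}$ and to identify its expansion. Taylor-expanding $b(x,\eta+\Phi(x,y,\xi))a(y,\xi)$ in $y$ at $y=x$, and rewriting each $(y-x)^\alpha$ factor under $e^{i(x-y)\cdot\eta}$ as $(-D_\eta)^\alpha$ followed by integration by parts in $\eta$, gives an asymptotic expansion whose principal term is $b(x,\fy'_x(x,\xi))\,a(x,\xi)$, with every further term losing one order simultaneously in $\x$ and in $\csi$. The SG membership of each symbol in the series follows from Fa\`a di Bruno applied to the composition $b(x,\fy'_x(x,\xi))$, using $\langle\fy'_x\rangle\asymp\csi$ and the fact that $\fy\in S^{1,1}$ controls all higher derivatives; a Borel-type summation then yields a representative $c_1\in S^{m_1+m_2,\mu_1+\mu_2}$. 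The difference between the original oscillatory integral and the Borel-summed symbol is a smoothing remainder $r_1\in S^{-\infty,-\infty}$, obtained by the standard localization-plus-integration-by-parts argument: outside a conic neighbourhood of $y=x$, iterated applications of $(1+|x-y|^2)^{-1}(1-\Delta_\eta)$ and $(1+|\eta|^2)^{-1}(1-\Delta_y)$, combined with the non-degeneracy of $\fy$ in both variables, produce arbitrary polynomial decay in both $\x$ and $\csi$.

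For the second composition $\Op_\fy(a)\circ\Op(b)$, I would run a parallel argument, either by applying the Kuranishi reduction to $b$ in place of $\fy$, or equivalently by using the identity $(\Op_\fy(a)\Op(b))^* = \Op(b)^*\,\Op^*_\fy(\overline a)$ together with the analogous composition between type II FIOs and pseudodifferential operators from \cite{coriasco99}. I expect the hard part, in both compositions, to be the bidirectional SG estimate for $c_1$ and $c_2$: whereas the classical H\"ormander calculus only requires $\csi$-decay, here one must simultaneously track $\x$-decay at every step, and this is precisely where condition (3) $\langle\fy'_\xi\rangle\asymp\x$ of Definition \ref{def:phase} becomes essential, providing the non-degeneracy in $\xi$ that enables the integrations by parts in $\xi$ needed to extract the required $\x$-powers from the amplitude.
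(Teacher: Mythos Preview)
The paper does not supply its own proof of Theorem~\ref{thm:compi}; the result is quoted from \cite{coriasco99}, to which the reader is explicitly referred for the argument. Your outline follows the standard Kuranishi-trick strategy that is indeed the approach taken in that reference, and is correct in its essentials. One minor point of presentation: the equivalence $\langle\Phi(x,y,\xi)\rangle\asymp\csi$ that you invoke after the change of variables does not hold globally in $(x,y)$ but only where $y$ is suitably close to $x$ (say $|x-y|\le\epsilon\x$), so the localization you mention near the end should in fact precede the symbol analysis, with the off-diagonal region treated separately as a smoothing contribution via the integration-by-parts you describe.
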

To consider the composition of $SG$ FIOs of type I and type II some more hypotheses are needed, leading to the definition
of the classes $\Phr$ and $\Phr(\lambda)$ of regular $SG$-phase functions. 
\begin{definition}[Regular $SG$-phase function]\label{def:phaser}
Let $\lambda\in [0,1)$ and $\delta>0$. A function $\varphi\in\Ph$ belongs to the class $\Phr(\lambda)$ if it satisfies the following conditions:
\begin{enumerate}
\item $\vert\det(\varphi''_{x\xi})(x,\xi)\vert\geq \delta$, $\forall (x,\xi)$;
\item the function  $J(x,\xi):=\varphi(x,\xi)-x\cdot\xi$ is such that
\beqs\label{hyp}
\ds\sup_{\afrac{x,\xi\in\R^d}{|\alpha+\beta|\leq 2}}\frac{|D_\xi^\alpha D_x^\beta J(x,\xi)|}{\x^{1-|\beta|}\<\xi\>^{1-|\alpha|}}\leq \lambda.
\eeqs
\end{enumerate}
If only condition (1) holds, we write $\fy\in\Phr$.
\end{definition}
\begin{remark}
Notice that condition \eqref{hyp} means that $J(x,\xi)/\lambda$ is bounded with constant $1$ in $S^{1,1}((2))$, the Fr\'echet space
of symbols $a\in C^\infty(\R^d\times\R^d)$ such that the estimates \eqref{eq:disSG} hold for $|\alpha+\beta|\le2$ only. 
Notice also that condition (1) in Definition \ref{def:phaser} is authomatically fulfilled when condition (2) holds true for a 
sufficiently small $\lambda\in[0,1)$.
\end{remark}
For $\ell\in\N_0$, we also introduce the seminorms
\[
	\|J\|_{2,\ell}:=
	\ds\sum_{2\leq |\alpha+\beta|\leq 2+\ell}\sup_{x,\xi\in \R^{d}}
	\ds\frac{|D_\xi^\alpha D_x^\beta J(x,\xi)|}{\x^{1-|\beta|}\<\xi\>^{1-|\alpha|}},
\]
and
\[
\|J\|_\ell:=\ds\sup_{\afrac{x,\xi\in\R^d}{|\alpha+\beta|\leq 1}}\frac{|D_\xi^\alpha D_x^\beta J(x,\xi)|}{\x^{1-|\beta|}\<\xi\>^{1-|\alpha|}}+\|J\|_{2,\ell},
\]
cfr. \cite{kumano-go}. We notice that $\varphi\in\Phr(\lambda)$ means that (1) of Definition \ref{def:phaser} and $\|J\|_0\leq \lambda$ hold, and then we define the following subclass of the class of regular $SG$ phase functions:
\begin{definition}\label{def:phaserell}
Let $\lambda\in [0,1)$, $\delta>0$, $\ell \geq 0$. A function $\varphi$ belongs to the class $\Phr(\lambda,\ell)$ if $\varphi\in\Phr(\lambda)$ and $\|J\|_\ell\leq \lambda$ for the corresponding $J$.
\end{definition}

The result of a composition of $SG$ FIOs of type I and type II with the same regular $SG$-phase functions is a $SG$ pseudodifferential operator, see again \cite{coriasco99}. The continuity properties of regular $SG$ FIOs on the Sobolev-Kato spaces can be expressed as
follows, using the operators of order $(m,\mu)\in\R^2$ introduced above. 
\begin{theorem}\label{thm:SGAF}
	Let $\varphi$ be a regular $SG$ phase function and $a\in S^{m,\mu}(\R^d)$, $(m,\mu)\in\R^2$. Then, $\Op_\varphi(a)\in\caO(m,\mu)$.
\end{theorem}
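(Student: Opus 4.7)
The plan is to reduce the claim to the Asada-Fujiwara $L^2$-continuity for $SG$ FIOs with symbol in $S^{0,0}$ and regular $SG$-phase function (recalled in the paragraph after Definition~\ref{def:phaser}), by sandwiching $\Op_\fy(a)$ between suitable elliptic $SG$ pseudodifferential operators. Fix $(z,\zeta)\in\R^2$. I pick elliptic symbols $p_1\in S^{z-m,\zeta-\mu}$ and $p_2\in S^{z,\zeta}$, and a parametrix $Q_2\in\Op(S^{-z,-\zeta})$ of $\Op(p_2)$, so that $Q_2\circ\Op(p_2)=I+R$ with $R\in\Op(S^{-\infty,-\infty})$.

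Applying Theorem~\ref{thm:compi} twice, first to $\Op_\fy(a)\circ Q_2$ and then to the composition with $\Op(p_1)$ on the left, I obtain
\[
\Op(p_1)\circ\Op_\fy(a)\circ Q_2 \,=\, \Op_\fy(\widetilde a) \quad \bmod \,\Op(S^{-\infty,-\infty}),
\]
with $\widetilde a\in S^{0,0}(\R^d)$, since the orders sum to $(z-m)+m+(-z)=0$ and $(\zeta-\mu)+\mu+(-\zeta)=0$. The Asada-Fujiwara theorem then yields boundedness of $\Op_\fy(\widetilde a)\colon L^2(\R^d)\to L^2(\R^d)$; smoothing operators being bounded on every Sobolev-Kato space, the same holds for $\Op(p_1)\circ\Op_\fy(a)\circ Q_2$.

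Next, for $u\in H^{z,\zeta}(\R^d)$, writing $u=Q_2\,\Op(p_2)u-Ru$ gives
\[
\Op(p_1)\,\Op_\fy(a)\,u \,=\, \bigl(\Op(p_1)\circ\Op_\fy(a)\circ Q_2\bigr)\bigl(\Op(p_2)u\bigr) - \Op(p_1)\,\Op_\fy(a)\,Ru.
\]
By Theorem~\ref{thm:sobcont}, $\Op(p_2)u\in L^2(\R^d)$, so the first term lies in $L^2(\R^d)$ by the previous step; the second term is in fact in $\caS(\R^d)$, using that $R$ is smoothing, together with the continuity of $\Op_\fy(a)$ and $\Op(p_1)$ on $\caS(\R^d)$ (the former among the basic properties of $SG$ FIOs from \cite{coriasco99}). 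Hence $\Op(p_1)\,\Op_\fy(a)u\in L^2(\R^d)$, and since $\Op(p_1)$ is elliptic of order $(z-m,\zeta-\mu)$, applying its parametrix $Q_1\in\Op(S^{-(z-m),-(\zeta-\mu)})$ one concludes $\Op_\fy(a)u\in H^{z-m,\zeta-\mu}(\R^d)$, with the corresponding operator-norm bound obtained by tracking the constants from \eqref{eq:normsob} through the compositions. Since $(z,\zeta)\in\R^2$ is arbitrary, $\Op_\fy(a)\in\caO(m,\mu)$.

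The main technical point is the composition step: one must verify that Theorem~\ref{thm:compi} can be iterated on both sides of $\Op_\fy(a)$ and still deliver a type-I $SG$ FIO with the same phase $\fy\in\Phr$ and a symbol exactly in $S^{0,0}$, so that the Asada-Fujiwara theorem applies; once this is granted, the remaining arguments are standard manipulations within the $SG$-calculus and the characterization of the Sobolev-Kato spaces through elliptic operators and their parametrices.
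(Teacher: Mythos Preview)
The paper does not give its own proof of this theorem: it is stated as a known fact, with the Asada--Fujiwara $L^2$-boundedness for $S^{0,0}$ symbols and regular phase functions attributed to \cite{coriasco99} in the paragraph preceding Theorem~\ref{thm:compi}. Your argument is exactly the standard derivation of the full Sobolev--Kato statement from that $L^2$ result, and it is correct: sandwiching $\Op_\fy(a)$ between elliptic $SG$ pseudodifferential operators of orders $(z-m,\zeta-\mu)$ and $(-z,-\zeta)$, invoking Theorem~\ref{thm:compi} on both sides to land in $\Op_\fy(S^{0,0})$ modulo smoothing, applying Asada--Fujiwara, and then unwinding via parametrices. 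The handling of the smoothing remainders through the $\caS$-continuity of $SG$ FIOs is fine, and the final norm bound can be obtained either by tracking constants as you indicate or, more cheaply, by the closed graph theorem once the mapping property is established. Your closing caveat about iterating Theorem~\ref{thm:compi} is overly cautious: that theorem already covers composition with $SG$ $\psi$DOs on either side, and the orders add as claimed.
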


The study of the composition of $M\geq 2$ $SG$ FIOs of type I $\Op_{\varphi_j}(a_j)$ with regular $SG$-phase functions 
$\varphi_j\in\Phr(\lambda_j)$ and symbols $a_j\in S^{m_j,\mu_j}(\R^{d})$, $j=1,\ldots,M$,
has been done in \cite{AleSandro}. The result of such composition is still an SG-FIO with a regular SG-phase function $\varphi$ given by the so-called \textit{multi-product} $\varphi_1\sharp\cdots\sharp\varphi_M$ of the phase functions $\varphi_j$, $j=1,\ldots,M$, and symbol $a$ as in Theorem \ref{thm:mainprbis} here below.

\begin{theorem}\label{thm:mainprbis} 
Consider, for $j=1,2, \dots, M$, $M\ge 2$, the $SG$ FIOs of type I $\Op_{\fy_j}(a_j)$ with $a_j\in S^{m_j,\mu_j}(\R^{d})$, $(m_j,\mu_j)\in\R^2$, and $\fy_j\in\Phr(\lambda_j)$ such that $\lambda_1+\cdots+\lambda_M\le\lambda\le\frac{1}{4}$ for some sufficiently small $\lambda>0$.
Then, there exists $a\in S^{m,\mu}(\R^{d})$, $m=m_1+\cdots+m_M$, $\mu=\mu_1+\cdots+\mu_M$, such that, setting $\phi=\fy_1\sharp\cdots\sharp\fy_M$, we have
\[\Op_{\fy_1}(a_1) \circ\cdots\circ \Op_{\fy_M}(a_M)=\Op_{\phi}(a).\]
Moreover, for any $\ell\in\N_0$ there exist $\ell^\prime\in\N_0$, $C_\ell>0$ such that 
\begin{equation}\label{eq:estsna}
\vvvert a \vvvert_\ell^{m,\mu} \le C_\ell\prod_{j=1}^M \vvvert a_j \vvvert_{\ell^\prime}^{m_j,\mu_j}.
\end{equation}
\end{theorem}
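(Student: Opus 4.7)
The plan is to argue by induction on $M$, with the core work concentrated in the base case $M=2$. For $M=2$, I would write the composition as an iterated oscillatory integral,
\[
  (\Op_{\fy_1}(a_1)\circ\Op_{\fy_2}(a_2))u(x)
  = (2\pi)^{-2d}\iiint e^{\ii(\fy_1(x,\eta)-y\cdot\eta+\fy_2(y,\xi))}\,a_1(x,\eta)a_2(y,\xi)\,\widehat u(\xi)\,dy\,d\eta\,d\xi,
\]
and analyze the stationary points of the inner phase $\Psi(y,\eta;x,\xi)=\fy_1(x,\eta)-y\cdot\eta+\fy_2(y,\xi)$ in the variables $(y,\eta)$. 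Writing $\fy_j(x,\xi)=x\cdot\xi+J_j(x,\xi)$ with $J_j$ controlled by $\lambda_j$ in the sense of Definition \ref{def:phaser}, the critical-point system $y=\fy_{1,\eta}'(x,\eta)$, $\eta=\fy_{2,y}'(y,\xi)$ becomes a fixed-point equation of contraction type in suitably shifted coordinates, and the hypothesis $\lambda_1+\lambda_2\le\lambda$ with $\lambda$ small guarantees, uniformly in $(x,\xi)$, a unique smooth solution $(y^\ast(x,\xi),\eta^\ast(x,\xi))$ satisfying $SG$-type estimates inherited from the $J_j$.

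Next, I would define the multi-product phase by
\[
  (\fy_1\sharp\fy_2)(x,\xi):=\fy_1(x,\eta^\ast(x,\xi))+\fy_2(y^\ast(x,\xi),\xi)-y^\ast(x,\xi)\cdot\eta^\ast(x,\xi),
\]
and verify $\fy_1\sharp\fy_2\in\Phr(\lambda_1+\lambda_2)$ by differentiating the implicit equations and using the bounds on $J_1,J_2$; in particular the non-degeneracy of $\fy_{x\xi}''$ for the new phase follows from an explicit Hessian computation at the critical point together with the smallness of $\lambda$. With the new phase identified, a standard stationary phase / change-of-variables argument rewrites the triple integral as $\Op_{\fy_1\sharp\fy_2}(a)u$ for some $a\in S^{m_1+m_2,\mu_1+\mu_2}$. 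The contribution to $a$ coming from the Gaussian factor $(\det\Psi''_{(y,\eta)})^{-1/2}$ at the critical point lies in $S^{0,0}$ by the non-degeneracy bounds, while the leading part is $a_1(x,\eta^\ast)a_2(y^\ast,\xi)$ modulo symbols of lower order, which gives the correct class.

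For arbitrary $M\ge 2$, I would iterate: set $\phi_{(2)}=\fy_1\sharp\fy_2$, then apply the $M=2$ result to $\Op_{\phi_{(2)}}(a_{(2)})\circ\Op_{\fy_3}(a_3)$, and proceed. At every step the hypothesis $\sum_{j=1}^M\lambda_j\le\lambda\le 1/4$ keeps the cumulative phase inside the regular $SG$-class with control constants depending only on $\lambda$, so the contraction argument above applies uniformly. An associativity lemma for $\sharp$, proved by comparing the critical-point equations obtained from different bracketings, shows that the final phase coincides with $\fy_1\sharp\cdots\sharp\fy_M$ regardless of the order of composition.

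The main obstacle, and the one that actually forces the small-$\lambda$ hypothesis and makes \eqref{eq:estsna} nontrivial, is the uniform control of the seminorms of the resulting amplitude as $M$ grows. A naive iteration of the $M=2$ estimate would give a constant $C_\ell(M)$ that blows up geometrically, destroying the product bound in \eqref{eq:estsna}. To prevent this one must track the stationary-phase expansion carefully: the contributions from the Hessian determinants and from the remainder terms in the Taylor expansions around the critical points must be estimated by seminorms $\vvvert a_j\vvvert^{m_j,\mu_j}_{\ell'}$ with a loss of derivatives $\ell\mapsto \ell'$ that is \emph{additive} rather than multiplicative in $M$, using that the $J_j$ are small in the $S^{1,1}((2))$ seminorm. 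The smallness $\lambda\le 1/4$ is precisely what makes the Neumann-type series governing the implicit function theorem converge with a universal constant, so that the product estimate \eqref{eq:estsna} holds with $C_\ell$ independent of $M$.
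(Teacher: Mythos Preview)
The paper does not actually prove this theorem: immediately after the statement it says that Theorem~\ref{thm:mainprbis} is a corollary of the main theorem in \cite{AleSandro}, and that the result is only being recalled. So there is no in-paper argument to compare against beyond the pointer to that reference.

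That said, your sketch is along the correct lines and broadly matches the strategy in \cite{AleSandro}, which adapts Kumano-go's method to the $SG$ setting. The one substantive difference worth flagging is the organization of the argument for general $M$. You propose pure induction on $M$, composing two FIOs at a time and invoking an associativity lemma for $\sharp$. In \cite{AleSandro} (following \cite{kumano-go}) the multi-product $\fy_1\sharp\cdots\sharp\fy_M$ is instead defined \emph{directly}, through a single system of $2(M-1)$ critical-point equations in all intermediate variables at once, and the amplitude $a$ is obtained from the resulting $(M-1)d$-fold oscillatory integral in one pass. This is not merely a cosmetic choice: it is precisely what yields the product estimate \eqref{eq:estsna} with $C_\ell$ independent of $M$, which is the point needed later for the convergence of the approximating sequences in Theorem~\ref{thm:fundsol}. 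Your final paragraph correctly identifies that a naive iteration of the $M=2$ bound gives a geometrically growing constant; however, the resolution in the literature is not to sharpen the inductive bookkeeping but to bypass the induction entirely for the seminorm estimate, treating the $M$-fold composition as a single oscillatory integral whose phase Hessian is controlled uniformly by the global smallness condition $\sum\lambda_j\le\lambda\le\frac14$. Your inductive route is adequate for placing $a$ in the right symbol class and for the associativity of $\sharp$, but it does not by itself deliver the uniform constant in \eqref{eq:estsna}.
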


Theorem \ref{thm:mainprbis} is a corollary of the main Theorem in \cite{AleSandro}. There, the \textit{multi-product} 
of regular $SG$-phase functions is defined and its properties are studied, parametrices and compositions of regular $SG$ FIOs with amplitude identically equal to $1$
are considered, leading to the general composition $\Op_{\fy_1}(a_1) \circ\cdots\circ \Op_{\fy_M}(a_M)$. In the present paper we only recall this composition result, since it is needed for the determination of the fundamental solutions of the hyperbolic operators \eqref{elle}, involved in \eqref{eq:SPDE}, in the case of involutive roots with non-constant multiplicities.

\vspace{3mm}

Applications of the $SG$ FIOs theory to $SG$-hyperbolic Cauchy problems were initially given in \cite{Coriasco:998.2, Coriasco:998.3}.
Many authors have, since then, expanded the $SG$ FIOs theory and its applications to the solution of hyperbolic problems
in various directions. To mention a few, see, e.g.,
M. Ruzhansky, M. Sugimoto \cite{RuSu}, 
E. Cordero, F. Nicola, L Rodino \cite{CorNicRod1}, and the references quoted there and in \cite{AleSandro}.

In \cite{AleSandro}, the results in Theorem \ref{thm:mainprbis} have been applied to study classes of $SG$-hyperbolic Cauchy problems, constructing their fundamental solution $\{E(t,s)\}_{0\leq s\leq t\leq T}$. The existence of the fundamental solution provides, via Duhamel's formula, existence and uniqueness of the solution to the system, for any given Cauchy data in the weighted Sobolev spaces $H^{z,\zeta}(\R^d)$, $(z,\zeta)\in\R^2$. 
A remarkable feature, typical for these classes of hyperbolic problems, is the
\textit{well-posedness with loss/gain of decay at infinity}, observed for the first time in \cite{AC06}, and then in \cite{AC10}.

\vspace{3mm}
In the present paper we will deal with the following three classes of equations of the form \eqref{eq:SPDE}, and corresponding operators $L$:
\begin{enumerate}
\item {\em strictly hyperbolic equations}, that is, $\caL_m$ satisfies
\eqref{roots} with real-valued, distinct and separated roots $\tau_j$, $j=1,\dots,m$, in the sense that there exists a constant $C>0$ such that 
\begin{equation}\label{def:strict}
|\tau_j(t,x,\xi)-\tau_k(t,x,\xi)|\geq C\x\csi,\quad \forall j\neq k,\ (t,x,\xi)\in[0,T]\times\R^{2d};
\end{equation}
\item {\em hyperbolic equations with (roots of) constant multiplicities}, that is, $\caL_m$ satisfies
\eqref{roots} and the real-valued, characteristic roots can be divided into $n$ groups ($1\leq n\leq m$) of distinct and separated roots, in the sense that, possibly
after a reordering of the $\tau_j$, $j=1,\dots, m$, there exist 
$l_1,\ldots l_n\in\N$ with $l_1+\ldots+l_n=m$ and $n$ sets
\[ G_1=\{\tau_1=\cdots=\tau_{l_1}\},\quad G_2=\{\tau_{l_1+1}=\cdots=\tau_{l_1+l_2}\},\quad \ldots \quad 
G_n=\{\tau_{m-l_n+1}=\cdots=\tau_{m}\},\]
satisfying, for a constant $C>0$,
\begin{equation}\label{def:constmult}\tau_j\in G_p,\tau_k\in G_q ,\ p\neq q,\ 1\leq p,q\leq n\Rightarrow |\tau_j(t,x,\xi)-\tau_k(t,x,\xi)|\geq C\x\csi,\quad \forall (t,x,\xi)\in[0,T]\times\R^{2d};\end{equation}
notice that, in the case $n=1$, we have only one group of $m$ coinciding roots, that is, $\caL_m$ 
admits a single real root of multiplicity $m$,
while for $n=m$ we recover the strictly hyperbolic case; the number $l=\max_{j=1,\dots,n}l_j$ is the \textit{maximum multiplicity of the roots of $\caL_m$};
\item {\em hyperbolic equations with involutive roots}, that is, $\caL_m$ satisfies \eqref{roots} with 
real-valued characteristic roots such that
\begin{eqnarray}\label{def:involutive}
[D_t-\Op(\tau_j(t)), D_t-\Op(\tau_k(t))]=&
&\Op(a_{jk}(t))\,(D_t-\Op(\tau_j(t))
\\
\nonumber
&+&\Op(b_{jk}(t))\,(D_t-\Op(\tau_k(t)))+\Op(c_{jk}(t)),
\end{eqnarray}
for some $a_{jk},b_{jk},c_{jk}\in C^\infty([0,T],S^{0,0}(\R^d))$, $j,k=1,\dots,m$. 
\end{enumerate} 
\begin{remark}
Recall that roots of constant multiplicities are always involutive, see,
e.g., \cite{ACSlinear} for a proof. The converse statement is not true in general, as shown in Example \ref{ex:1.3}. 
\end{remark}
\begin{definition}\label{3types}
	We will say that the (linear) operator $L$ in \eqref{elle} is
	\textit{strictly ($SG$-)hyperbolic}, \textit{weakly ($SG$-)hyperbolic with constant multiplicities}, or 
	\textit{weakly ($SG$-)hyperbolic with involutive roots}, respectively, if such properties are satisfied by the
	roots of $\caL_m$, as explained above.
\end{definition}

\begin{remark}The \textit{asymmetric} case of coefficients satisfying, for constants $C_{jk\alpha \beta}>0$,
$|\partial^k_t\partial^\beta_x a_{\alpha j}(t,x)|\le C_{jk \alpha \beta} \x^{m_\alpha-|\beta|}$ 
for all $(t,x)\in[0,T]\times\R^d$ and $m_\alpha\leq |\alpha|$, $|\alpha|\leq j$, $k\in\N_0$, 
$1\leq j\leq m$, is currently under investigation by the authors.
Indeed, in that situation we would obtain real characteristic roots $\tau_j\in C^\infty([0,T];S^{\varepsilon,1}(\R^d))$, with $\varepsilon:=\max\{m_\alpha/j\ \vert\ |\alpha|\le j, 1\leq j\leq m\}\in[0,1]$. The particular case $\varepsilon=0$ is already known, see \cite{alessiandre,peszat}, while the \textit{symmetric} case $\varepsilon =1$ is the one we focus on in the present paper. The analysis of the
general case $\epsilon\in(0,1)$ will appear elsewhere.
\end{remark}

The next one is a key result in the analysis of $SG$-hyperbolic Cauchy problems by means of the corresponding class of Fourier
operators.
Given a symbol $\varkappa\in C([0,T]; S^{1,1})$, set $\Delta_{T_0}=\{(s,t)\in[0,T_0]^2\colon 0\le s\le t\le T_0\}$, $0<T_0\le T$,
and consider the eikonal equation
\beqs\label{eik}
\begin{cases}
\partial_t\varphi(t,s,x,\xi)=\varkappa(t,x,\varphi'_x(t,s,x,\xi)),& t\in [s,T_0],
\\
\varphi(s,s,x,\xi)=x\cdot\xi,& s\in [0,T_0),
\end{cases}
\eeqs
with $0<T_0\leq T$. By an extension of the theory developed  in \cite{Coriasco:998.2}, it is possible to
prove that the following Proposition \ref{trovala!} holds true.
\begin{proposition}\label{trovala!}
For any small enough $T_0\in(0,T]$, 
equation \eqref{eik} admits a unique solution $\varphi\in C^1(\Delta_{T_0},$ $S^{1,1}(\R^d))$,
satisfying $J\in C^1(\Delta_{T_0},S^{1,1}(\R^d))$ and
\beqs\label{eiks}
\partial_s\varphi(t,s,x,\xi)=-\varkappa(s,\varphi'_\xi(t,s,x,\xi),\xi),
\eeqs
for any $(t,s)\in\Delta_{T_0}$. Moreover, 
for every $\ell\in\N_0$ there exists $\delta>0$, $c_\ell\geq 1$ and $\widetilde{T}_\ell\in[0,T_0]$ such that 
$\varphi(t,s,x,\xi)\in\Phr(c_\ell|t-s|)$, 
with $\| J\|_{2,\ell}\leq c_\ell |t-s|$ for all $(t,s)\in\Delta_{\widetilde{T}_\ell}$.
\end{proposition}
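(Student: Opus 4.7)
The plan is to recast \eqref{eik} as a nonlinear integral equation for $J(t,s,x,\xi) := \varphi(t,s,x,\xi) - x\cdot\xi$ and to solve it by Banach fixed point in suitable $SG$-seminorms. Since $\varphi'_x = \xi + J'_x$, integrating \eqref{eik} in $t$ from $s$ to $t$ transforms the eikonal Cauchy problem into
\[
J(t,s,x,\xi) = \int_s^t \varkappa\bigl(\tau, x, \xi + J'_x(\tau,s,x,\xi)\bigr)\, d\tau, \qquad (s,t)\in\Delta_{T_0},
\]
which I abbreviate as $J = \Psi(J)$, with $\Psi$ the right-hand side operator.

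First I would fix $\ell \in \N_0$ and, for constants $c_\ell \ge 1$ and $T_\ell^\prime \in (0,T]$ to be chosen, introduce the set
\[
X_\ell := \bigl\{ J \in C(\Delta_{T_\ell^\prime}, S^{1,1}(\R^d)) \colon \|J(t,s,\cdot,\cdot)\|_\ell \le c_\ell\, |t-s|,\ (s,t) \in \Delta_{T_\ell^\prime}\bigr\},
\]
which becomes a complete metric space under $d(J_1,J_2) = \sup_{(s,t)\in\Delta_{T_\ell^\prime}} \|(J_1-J_2)(t,s,\cdot,\cdot)\|_0$. The two main analytic tasks are to prove (i) $\Psi(X_\ell) \subset X_\ell$ for $c_\ell$ large enough, and (ii) $\Psi$ is a strict contraction on $(X_\ell, d)$ for $T_\ell^\prime$ small enough. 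Granted (i)--(ii), Banach's theorem produces a unique fixed point $J_\ell \in X_\ell$; uniqueness in the lowest seminorm forces these $J_\ell$ to coincide on common domains, giving a single $J \in \bigcap_\ell X_\ell$, and then $C^1$-regularity follows by returning to the differential form \eqref{eik} and to the analogous relation obtained from differentiating the integral equation in $s$.

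The technical heart is the symbol estimate for the composition $\varkappa(\tau, x, \xi + J'_x(\tau,s,x,\xi))$. Since $J \in X_\ell$ entails $J'_x \in S^{0,1}$ with $|J'_x|\lesssim c_\ell |t-s|\,\langle\xi\rangle$, for $c_\ell T_\ell^\prime$ small enough one has $\langle \xi + J'_x\rangle \asymp \langle\xi\rangle$ uniformly in $(x,\xi)$. A Fa\`a di Bruno expansion of $D_x^\beta D_\xi^\alpha[\varkappa(\tau,x,\xi + J'_x)]$ for $|\alpha+\beta|\le 2+\ell$, together with $\varkappa \in C([0,T];S^{1,1})$ and $\|J\|_\ell \le c_\ell|t-s|$, yields an estimate of the form $\|\varkappa(\tau,\cdot,\cdot + J'_x(\tau,s,\cdot,\cdot))\|_\ell \le A_\ell$ uniformly in $\tau$, where $A_\ell$ depends only on $\ell$ and on sufficiently high seminorms of $\varkappa$. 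Integration in $\tau$ then gives $\|\Psi(J)\|_\ell \le A_\ell |t-s|$, so choosing $c_\ell \ge A_\ell$ secures (i). For (ii), the identity
\[
\varkappa(\tau,x,\xi + (J_1)'_x) - \varkappa(\tau,x,\xi + (J_2)'_x) = \int_0^1 (\partial_\xi\varkappa)\bigl(\tau, x, \xi + \theta(J_1)'_x + (1-\theta)(J_2)'_x\bigr)\cdot\bigl((J_1)'_x - (J_2)'_x\bigr)\, d\theta
\]
combined with the $S^{1,1}$-bound on $\partial_\xi\varkappa \in S^{1,0}$ yields $\|\Psi(J_1) - \Psi(J_2)\|_0 \le C_0 T_\ell^\prime \, d(J_1, J_2)$; choosing $\widetilde T_\ell := T_\ell^\prime$ so small that $C_0 T_\ell^\prime \le \tfrac 1 2$ makes $\Psi$ a strict contraction. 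Once the unique fixed point $J$ is found, the bound $\|J\|_1 \le c_1|t-s|$ applied to $J''_{x\xi}$ forces $\varphi''_{x\xi} = I + J''_{x\xi}$ to satisfy $|\det \varphi''_{x\xi}| \ge 1 - C c_1|t-s| \ge \delta > 0$ for $|t-s|$ small, hence $\varphi \in \Phr(c_\ell|t-s|)$ in the sense of Definitions \ref{def:phaser} and \ref{def:phaserell}. Finally, the dual equation \eqref{eiks} is derived from the canonical relation associated with $\varphi$: differentiating $\varphi(s,s,x,\xi)= x\cdot\xi$ in $s$ yields \eqref{eiks} at $t=s$ (using $\varphi'_\xi(s,s,x,\xi)=x$), and propagating the relation along the Hamilton flow of $\varkappa$, together with the uniqueness already obtained, extends \eqref{eiks} to every $(t,s)\in\Delta_{T_0}$, proceeding as in \cite{Coriasco:998.2}.

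The hardest part will be the uniform $SG$-symbol control of $\varkappa(\tau, x, \xi + J'_x)$ at every order of derivation $\le 2+\ell$: a priori $J'_x$ is only in $S^{0,1}$ and its values can shift $\xi$ by a quantity of order $\langle\xi\rangle$, but the factor $|t-s|$ inherited from $J \in X_\ell$ forces this shift to be negligible uniformly in $(x,\xi)$, at the price of an $\ell$-dependent smallness condition $\widetilde T_\ell$ on the time window. This is exactly the reason the estimate $\|J\|_{2,\ell} \le c_\ell|t-s|$ is stated only on $\Delta_{\widetilde T_\ell}$ and not on the whole $\Delta_{T_0}$.
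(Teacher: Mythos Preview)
The paper does not prove Proposition~\ref{trovala!} in the text: it is stated with the preface ``By an extension of the theory developed in \cite{Coriasco:998.2}, it is possible to prove that the following Proposition~\ref{trovala!} holds true'', and no further argument is given. In \cite{Coriasco:998.2}, as in Kumano-go \cite{kumano-go}, the construction of $\varphi$ proceeds via the \emph{method of characteristics}: one solves the Hamiltonian system $\dot q=-\partial_\xi\varkappa(t,q,p)$, $\dot p=\partial_x\varkappa(t,q,p)$ with data $(x,\xi)$ at time $s$, verifies $SG$-type estimates on the flow map $(x,\xi)\mapsto(q(t),p(t))$ by differentiating the ODE system, inverts the map $x\mapsto q(t;s,x,\xi)$ for $|t-s|$ small, and then recovers $\varphi$ by integration along the bicharacteristics. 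The seminorm bounds $\|J\|_{2,\ell}\le c_\ell|t-s|$ and the dual equation \eqref{eiks} are obtained directly from the flow representation.

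Your fixed-point approach is different in spirit and, as written, has a genuine gap in step~(ii). The seminorm $\|\cdot\|_0$ defined in the paper controls derivatives of $J$ up to total order~$2$. But $\Psi(J)$ depends on $J$ through $J'_x$, so one $x$-derivative of $\Psi(J)$ already involves $J''_{xx}$, and a \emph{second} derivative of $\Psi(J_1)-\Psi(J_2)$ produces a term of the form $(\partial_\xi\varkappa)(\cdots)\cdot\partial^2_x\big((J_1)'_x-(J_2)'_x\big)$, i.e.\ a \emph{third} derivative of $J_1-J_2$, which is \emph{not} controlled by $d(J_1,J_2)=\|J_1-J_2\|_0$. (Already in the toy case $\varkappa(t,x,\eta)=a(x)\,\eta$ one gets $\partial_x^2[\Psi(J_1)-\Psi(J_2)]$ containing $a(x)\,\partial_x^3(J_1-J_2)$.) The Hadamard identity you wrote only handles the zeroth-order difference; the derivative loss is structural, and the uniform bound on $\|J_i\|_\ell$ in $X_\ell$ does not convert a bound on $\partial_x^3(J_1-J_2)$ into a multiple of $d(J_1,J_2)$. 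Hence the contraction inequality $\|\Psi(J_1)-\Psi(J_2)\|_0\le C_0T_\ell'\,d(J_1,J_2)$ does not follow from your argument.

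This loss of one derivative is precisely why the literature solves the eikonal equation through the Hamiltonian flow rather than by a direct Picard iteration on $J$. If you wish to keep a fixed-point flavour, you would have to set it up at the level of the bicharacteristic ODEs (where the map is genuinely Lipschitz in the unknowns without derivative loss), and then transfer the $SG$ estimates on the flow to $\varphi$; that is essentially the route taken in \cite{Coriasco:998.2,kumano-go}. Your treatment of the invariance $\Psi(X_\ell)\subset X_\ell$, of the bound $|\det\varphi''_{x\xi}|\ge\delta$, and of \eqref{eiks} at $t=s$ are fine as outlines, but they rest on having a solution in hand, which your contraction step does not deliver.
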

\begin{remark}
	Of course, if additional regularity with respect to $t\in[0,T]$
	is fulfilled by the symbol $\varkappa$ in the right-hand side of \eqref{eik}, 
	this reflects in a corresponding increased regularity of the resulting
	solution $\varphi$ with respect to $(t,s)\in\Delta_{T_0}$. Since here we are not dealing with
	problems concerning the $t$-regularity of the solution, we assume smooth $t$-dependence of
	the coefficients of $L$. Some of the results below will anyway be formulated in situations of lower
	regularity with respect to $t$.
\end{remark}

By the hyperbolicity hypotheses \eqref{def:strict}, \eqref{def:constmult}, or \eqref{def:involutive}, as it will be shown below, to obtain  
the term $v_0$ and the kernel $\Lambda$, associated with the operator in \eqref{eq:SPDE},
it is enough to know the fundamental solution of certain first order systems.
Namely, let us consider the Cauchy problem
\begin{equation}\label{sys}
\begin{cases}
	(D_t - \Op(\kappa_1(t)) - \Op(\kappa_0(t)))W(t) = Y(t), & t\in [0,T], \\
	W(s)  = W_0, & s\in [0,T],
\end{cases}
\end{equation}
where the $(\nu\times\nu)$-system is hyperbolic with diagonal principal part, that is:
\begin{itemize}
	\item[-] the matrix $\kappa_1$ satisfies $\kappa_1\in C^\infty([0,T],S^{1,1})$, it is real-valued and diagonal, 
	and each entry on the principal diagonal coincides with the value of one of the
	roots $\tau_j\in C^\infty([0,T]; S^{1,1})$, possibly repeated a number of times, depending on their multiplicities;
	\item[-] the matrix $\kappa_0$ satisfies $\kappa_0\in C^\infty([0,T],S^{0,0})$.
\end{itemize}
In analogy with the terminology introduced above, we will say that the system \eqref{sys} is strictly hyperbolic, or
hyperbolic with constant multiplicities, when the elements on the main diagonal of $\kappa_1$ are all distinct and
satisfy \eqref{def:strict}, or when they satisfy \eqref{def:constmult}, respectively. Similarly, we will say that the system
is hyperbolic with involutive roots when they satisfy \eqref{def:involutive}. We will also generally assume $W_0\in H^{z,\zeta}$,
$Y\in C([0,T],H^{z,\zeta})$, $(z,\zeta)\in\R^2$.

The fundamental solution, or \textit{solution operator},
of \eqref{sys} is a family $\{E(t,s)\colon (t,s)\in[0,T_0]^2\}$, $0<T_0\le T$, of linear continuous operators in the strong topology of
$\scrL(H^{z,\zeta}, H^{z,\zeta})$, $(z,\zeta)\in\R^2$. 
In the cases of strict $SG$-hyperbolicity or of $SG$-hyperbolicity with constant multiplicities,
such family can be explicitly expressed in terms of suitable (matrices of) 
$SG$ FIOs of type I, modulo smoothing terms, see \cite{Coriasco:998.2, Coriasco:998.3} and Section \ref{subs:reduction} below. 
In the case of $SG$-hyperbolicity with variable multiplicities,
it is, in general, a limit of a sequence of (matrices of) $SG$ FIOs of type I. In all three cases, it satisfies
\begin{equation}\label{tocheck}
	\begin{cases}
		(D_t - \Op (\kappa_1(t)) - \Op(\kappa_0(t)))E(t,s) = 0, 	& (t,s)\in [0,T_0]^2,\\
		E(s,s)=I,						& s\in[0,T_0].
	\end{cases}
\end{equation}
More precisely, $E(t,s)$ has the following properties, see \cite{ACSlinear}, which actually hold true for the broader class of symmetric first order system of the
type \eqref{sys}, of which systems with real-valued, diagonal principal part are a special case, 
see \cite{cordes}, Ch. 6, \S 3, and \cite{Coriasco:998.2}.
\begin{theorem}\label{thm:fundsolabs}
	Let the system \eqref{sys} be hyperbolic with diagonal principal part $\kappa_1\in C^1([0,T], S^{1,1}$ $(\R^d))$,
	and lower order part $\kappa_0\in C^1([0,T], S^{0,0}(\R^d))$. Then, for any choice of $W_0\in H^{z,\zeta}(\R^d)$,
	$Y\in C([0,T], H^{z,\zeta}(\R^d))$,
	there exists a unique solution $W\in C([0,T], H^{z,\zeta}(\R^d))\cap C^1([0,T], H^{z-1,\zeta-1}(\R^d))$ 
	of \eqref{sys}, $(z,\zeta)\in\R^2$, given by Duhamel's formula
	\[
		W(t)=E(t,s)W_0+i\ds\int_s^t E(t,\vartheta)Y(\vartheta)d\vartheta,\quad t\in[0,T].
	\]
	Moreover, the \textit{solution operator} $E(t,s)$ has the following properties:
	\begin{enumerate}
		\item $E(t,s)\colon \caS^\prime(\R^d)\to\caS^\prime(\R^d)$ is an operator belonging to $\caO(0,0)$, $(t,s)\in[0,T]^2$;
		its first order derivatives, $\partial_t E(t,s)$, $\partial_s E(t,s)$, exist in the strong operator convergence of 
		$\scrL(H^{z,\zeta}(\R^d),H^{z-1,\zeta-1}(\R^d))$, $(z,\zeta)\in\R^2$, and belong to $\caO(1,1)$;
		\item $E(t,s)$ is bounded and strongly continuous on $[0,T]^2_{ts}$ in
		$\scrL(H^{z,\zeta}(\R^d),H^{z,\zeta}(\R^d))$, $(z,\zeta)\in\R^2$;
		$\partial_t E(t,s)$ and $\partial_s E(t,s)$ are bounded and strongly continuous on $[0,T]^2_{ts}$ in
		$\scrL(H^{z,\zeta}(\R^d),H^{z-1,\zeta-1}(\R^d))$, $(z,\zeta)\in\R^2$;
		\item for $t,s,t_0\in[0,T]$ we have
		\[
			E(t_0,t_0)=I, \quad
			E(t,s)E(s,t_0)=E(t,t_0), \quad
			E(t,s)E(s,t)=I;
		\]
		\item $E(t,s)$ satisfies, for $(t,s)\in [0,T]^2$, the differential equations
		\begin{align}
			\label{eq:U1}
			D_tE(t,s) -  (\Op (\kappa_1(t)) + \Op(\kappa_0(t)))E(t,s) &= 0,
			\\
			\label{eq:U2}
			D_sE(t,s) + E(t,s)(\Op (\kappa_1(s)) + \Op(\kappa_0(s))) &= 0;
		\end{align}
		\item the operator family $E(t,s)$ is uniquely determined by the properties (1)-(3) here above, and one
		of the differential equations \eqref{eq:U1}, \eqref{eq:U2}.
	\end{enumerate} 
\end{theorem}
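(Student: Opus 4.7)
The plan is to first construct the operator family $\{E(t,s)\}$ directly as a (possibly a limit of) matrix of $SG$ Fourier integral operators of type I, and then deduce existence, uniqueness, and all five listed properties from this construction combined with an $L^2$-based energy argument. The structural key is that $\kappa_1$ is real-valued and diagonal, so the system is essentially symmetric hyperbolic in the $SG$-calculus: by \eqref{eq:comp}, $\Op(\kappa_1)-\Op(\kappa_1)^*\in \Op(S^{0,0})\subset \caO(0,0)$, whence the principal part contributes only a bounded term to the $L^2$-energy identity.

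To construct $E(t,s)$ I would follow the scheme of \cite{Coriasco:998.2,Coriasco:998.3,AleSandro}. For each diagonal entry $\tau_j$ of $\kappa_1$, Proposition \ref{trovala!} applied to the eikonal equation \eqref{eik} with $\varkappa=\tau_j$ yields a regular $SG$-phase $\varphi_j\in\Phr(c|t-s|)$ on a small strip $\Delta_{T_0}$. In the strictly hyperbolic and constant-multiplicity cases one sets $E(t,s)=\mathrm{diag}(\Op_{\varphi_j}(a_j))$ modulo $\Op(S^{-\infty,-\infty})$, with amplitudes $a_j\in C^1(\Delta_{T_0},S^{0,0})$ determined by the transport equations arising from imposing \eqref{tocheck}; Theorem \ref{thm:compi} ensures that the formal calculation is internally consistent within the $SG$-calculus and that the residual is smoothing, to be removed by a standard Neumann-series correction. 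In the involutive case no single-phase representation suffices, and $E(t,s)$ is obtained as a limit of finite multi-products of $SG$ FIOs associated with the $\tau_j$; the uniform symbol bound \eqref{eq:estsna} of Theorem \ref{thm:mainprbis} drives the convergence, while the commutator identity \eqref{def:involutive} is what keeps the successive approximations inside the $SG$-class. The short-time construction is extended to all of $[0,T]^2$ by iterating the composition law $E(t,s)=E(t,\theta)E(\theta,s)$.

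Properties (1), (3) and (4) are then read off the construction together with Theorem \ref{thm:SGAF}: each $\Op_{\varphi_j}(a_j)$ lies in $\caO(0,0)$, $t$- or $s$-differentiation lowers both orders by exactly one landing in $\caO(1,1)$, and the group relations of (3) follow from the construction combined with $L^2$-uniqueness. For (2) and (5), I would run the energy estimate
\[
\tfrac{d}{dt}\|W(t)\|_{0,0}^2 \;\le\; C\,\|W(t)\|_{0,0}^2 \,+\, 2\,\|Y(t)\|_{0,0}\,\|W(t)\|_{0,0},
\]
obtained from the near-skew-adjoint decomposition of $i(\Op(\kappa_1)+\Op(\kappa_0))$ modulo $\caO(0,0)$, and close with Gronwall. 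The $H^{z,\zeta}$-version follows by conjugating the equation by $\jap^z\pdd^\zeta$: the commutators $[\jap^z\pdd^\zeta,\Op(\kappa_1)]$ and $[\jap^z\pdd^\zeta,\Op(\kappa_0)]$ are of order $(z,\zeta)$ by \eqref{eq:comp}, so the same inequality holds at the level of $\|\cdot\|_{z,\zeta}$. Duhamel's formula and the asserted regularity $W\in C([0,T],H^{z,\zeta})\cap C^1([0,T],H^{z-1,\zeta-1})$ then follow by direct integration and the $(1,1)$-mapping property of $D_t E(t,s)$, while (5) reduces to the observation that the difference of two candidate families satisfies the homogeneous inequality with vanishing data.

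The main technical obstacle lies in the involutive case, where $E(t,s)$ is only a limit of multi-product $SG$ FIOs and the convergence must be controlled simultaneously in every $H^{z,\zeta}$-norm while preserving the $\caO(0,0)$ membership needed for (1). The uniform seminorm bounds of Theorem \ref{thm:mainprbis} provide boundedness at each fixed $(z,\zeta)$, but the strong continuity in (2) ultimately requires an interpolation between different weights together with a careful use of \eqref{def:involutive} to keep the approximating systems uniformly hyperbolic; this is precisely where the machinery of \cite{AleSandro} and the symmetrization techniques of \cite{ACSlinear,kumano-go} become indispensable.
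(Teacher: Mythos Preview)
The paper does not prove this theorem; it is stated as a known result with references to \cite{ACSlinear} and, crucially, to \cite{cordes}, Ch.~6, \S3, where the argument is the abstract energy method for \emph{symmetric} first-order $SG$-systems. Your energy-estimate paragraph is essentially that proof: the diagonal real-valued $\kappa_1$ makes the system symmetric modulo $\caO(0,0)$, Gronwall gives the a~priori estimate in $H^{0,0}$, conjugation by $\jap^z\pdd^\zeta$ transfers it to every $H^{z,\zeta}$, and existence follows by the usual approximation-and-compactness (or Picard) argument. Properties (1)--(5) are then consequences of this abstract construction, uniformly in the multiplicity structure.

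Your proposal inverts the paper's logical order. You build $E(t,s)$ first as a (limit of) matrix of $SG$ FIOs, distinguishing the strict/constant-multiplicity/involutive cases, and only afterwards invoke the energy inequality for uniqueness. That is precisely the content of the \emph{subsequent} Theorem~\ref{thm:fundsol} and Proposition~\ref{prop:diffsm}, which in the paper rely on Theorem~\ref{thm:fundsolabs} already being available: one constructs the FIO family $A(t,s)$ solving the system modulo smoothing, and then uses the abstractly-existing $E(t,s)$ to show $E-A\in C^\infty(\Delta_{T_0},\Op(S^{-\infty,-\infty}))$. Your route is not incorrect, but it imports the heavy FIO machinery (phase functions, multi-products, the involutive analysis of \cite{AleSandro}) into a statement that needs none of it, and it introduces apparent case distinctions where the cited proof is uniform. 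It also creates a mild circularity: you extend to $[0,T]^2$ via the composition law in (3), yet justify (3) through $L^2$-uniqueness, which is the energy estimate you defer to the end. The cleaner path---and the one the paper points to---is to run the energy argument first and obtain $E(t,s)$ abstractly; the FIO representation is then a separate, finer structural result.
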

\begin{corollary}\label{cortocite}
	\begin{enumerate}
		\item Under the hypotheses of Theorem \ref{thm:fundsolabs}, $E(t,s)$ is invertible on $\caS(\R^d)$, $\caS^\prime(\R^d)$,
			and $H^{z,\zeta}(\R^d)$, $(z,\zeta)\in\R^2$, with inverse given by $E(s,t)$, $s,t\in[0,T]$. 
		\item If, additionally, one assumes $\kappa_1\in C^m([0,T], S^{1,1}(\R^d))$, $\kappa_0\in C^m([0,T], S^{0,0}(\R^d))$, $m\ge2$,
			the partial derivatives $\partial_t^j\partial_s^k E(t,s)$ exist in strong operator convergence of $\caS(\R^d)$
			and $\caS^\prime(\R^d)$, and $\partial_t^j\partial_s^k E(t,s)\in\caO(j+k,j+k)$, $j+k\le m$. Moreover, 
			$\partial_t^j\partial_s^k E(t,s)$ is strongly continuous on $[0,T]^2_{ts}$ in every
			$\scrL(H^{z,\zeta}(\R^d),$ $H^{z-j-k,\zeta-j-k}(\R^d))$, $(z,\zeta)\in\R^2$, $j+k\le m$.
	\end{enumerate}
\end{corollary}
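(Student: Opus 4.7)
Part (1) is essentially a direct consequence of the identity in Theorem \ref{thm:fundsolabs}(3). That theorem already gives $E(t,s)E(s,t)=I=E(s,t)E(t,s)$ as operator identities; since every $E(t,s)\in\caO(0,0)$, each $E(t,s)$ maps $\caS(\R^d)$, $\caS'(\R^d)$, and $H^{z,\zeta}(\R^d)$ continuously into themselves, so the same identity, restricted to each of these spaces, exhibits $E(s,t)$ as the two-sided continuous inverse of $E(t,s)$.

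For (2) I would argue by induction on $N=j+k$, with the base cases $N\le 1$ already furnished by Theorem \ref{thm:fundsolabs}(1)--(2). For the inductive step, assume the claim for all $(j',k')$ with $j'+k'\le N<m$. Formally differentiating \eqref{eq:U1} in $t$ and using the Leibniz rule,
\begin{equation*}
D_t\bigl(\partial_t^j\partial_s^k E(t,s)\bigr)
=\sum_{\ell=0}^j\binom{j}{\ell}\partial_t^{\ell}\bigl(\Op(\kappa_1(t))+\Op(\kappa_0(t))\bigr)\circ \partial_t^{j-\ell}\partial_s^k E(t,s).
\end{equation*}
The hypothesis $\kappa_1\in C^m([0,T],S^{1,1})$, $\kappa_0\in C^m([0,T],S^{0,0})$ ensures that, for $\ell\le m$, $\partial_t^\ell\kappa_1(t)\in S^{1,1}$ and $\partial_t^\ell\kappa_0(t)\in S^{0,0}$, so by Theorem \ref{thm:sobcont} each factor $\partial_t^\ell(\Op(\kappa_1(t))+\Op(\kappa_0(t)))$ lies in $\caO(1,1)$. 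Combining this with the inductive hypothesis $\partial_t^{j-\ell}\partial_s^k E(t,s)\in\caO(j-\ell+k,j-\ell+k)$ and the composition rule in Remark \ref{rem:O}(3) yields $\partial_t^{j+1}\partial_s^k E(t,s)\in\caO(j+k+1,j+k+1)$. The analogous computation, starting from \eqref{eq:U2}, handles $\partial_t^j\partial_s^{k+1}E(t,s)$, and the two recipes are compatible since differentiation in $t$ and $s$ commute and $\kappa_i(t)$ depends only on $t$.

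Strong continuity of $\partial_t^j\partial_s^k E(t,s)$ on $[0,T]^2_{ts}$ in $\scrL(H^{z,\zeta},H^{z-j-k,\zeta-j-k})$ is then inherited from continuity of the symbol maps $t\mapsto \partial_t^\ell\kappa_i(t)$ into the corresponding Fr\'echet spaces of $SG$-symbols (together with the continuity estimate \eqref{eq:normsob}) and from the strong continuity of the lower-order derivatives of $E$ supplied by the previous step of the induction. Strong-operator convergence on $\caS(\R^d)$ and $\caS'(\R^d)$ follows from \eqref{eq:spdecomp} once one has the statements in every $H^{z,\zeta}$.

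The one subtle point is justifying the formal Leibniz differentiation above as an actual strong-operator limit, rather than a purely formal manipulation: one must verify that the $t$-difference quotients of $\partial_t^j\partial_s^k E(t,s)$ converge in the strong operator topology of $\scrL(H^{z,\zeta},H^{z-j-k-1,\zeta-j-k-1})$ to the right-hand side of the Leibniz expansion. This is routine but bookkeeping-heavy, and is the only place where the precise $(1,1)$-loss in the Sobolev-Kato order is consumed by a single time differentiation; iterating gives exactly the order $(j+k,j+k)$ claimed.
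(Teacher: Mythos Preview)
Your argument is correct and is exactly the natural way to deduce the corollary from Theorem \ref{thm:fundsolabs}; the paper in fact states this result without proof, treating both parts as immediate consequences of the properties (1)--(4) listed there. Your inductive differentiation of \eqref{eq:U1} and \eqref{eq:U2}, combined with the composition rule of Remark \ref{rem:O}(3), is precisely the intended mechanism.
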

In \cite{AleSandro} we have proved the next Theorem \ref{thm:fundsol}, concerning the structure of $E(t,s)$, in the spirit of 
the approach followed in \cite{kumano-go}.
\begin{theorem}\label{thm:fundsol}
Under the same hypotheses of Theorem \ref{thm:fundsolabs},
if $T_0$ is small enough, 
for every fixed $(t,s)\in\Delta_{T_0}$, 
$E(t,s)$ is a limit of a sequence of matrices of $SG$ FIOs of type I, 
with regular phase functions $\varphi_{jk}(t,s)$ belonging to $\Phr(c_h|t-s|)$, $c_h\ge1$, of class $C^1$ with respect to
$(t,s)\in\Delta_{T_0}$, and amplitudes
belonging to $C^1(\Delta_{T_0}, S^{0,0}(\R^{d}))$. 
\end{theorem}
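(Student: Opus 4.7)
The strategy is to construct $E(t,s)$ via a Picard-type successive approximation scheme, based on the eikonal solutions of Proposition \ref{trovala!} and the composition calculus of Theorems \ref{thm:compi} and \ref{thm:mainprbis}, then to identify the resulting limit with the fundamental solution of Theorem \ref{thm:fundsolabs} through its uniqueness clause.

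First, for each diagonal entry $\varkappa_j$ of $\kappa_1$, I would solve the eikonal equation \eqref{eik} via Proposition \ref{trovala!} to obtain a phase $\varphi_j\in\Phr(c_\ell|t-s|)$ of class $C^1$ on $\Delta_{T_0}$. Taking $E_0(t,s)$ as the diagonal matrix with entries $\Op_{\varphi_j}(1)$, a direct application of Theorem \ref{thm:compi} shows that
\[
R_0(t,s) := (D_t - \Op(\kappa_1(t)) - \Op(\kappa_0(t)))\,E_0(t,s)
\]
is a matrix (or finite linear combination of matrices) of $SG$ FIOs of type I, with phases $\varphi_j$ and amplitudes in $C^1(\Delta_{T_0}, S^{0,0}(\R^d))$: by the eikonal equation, $D_t E_0$ cancels the principal contribution of $\Op(\kappa_1)E_0$, leaving only subprincipal terms and the $\Op(\kappa_0)$-term, all of order $(0,0)$. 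I would then iterate via the Volterra-type correction
\[
E_{N+1}(t,s) = E_0(t,s) - i\int_s^t E_0(t,\theta)\,R_N(\theta,s)\,d\theta, \qquad R_N := (D_t - \Op(\kappa_1) - \Op(\kappa_0))\,E_N.
\]
At each step, Theorem \ref{thm:mainprbis} expresses the composition of $E_0$ with the FIO-components of $R_N$ as $SG$ FIOs whose phases are multi-products of the basic $\varphi_j$'s, provided the smallness condition $\sum_k\lambda_k\le\lambda\le\tfrac{1}{4}$ is preserved; since each $\varphi_j\in\Phr(c|t-s|)$, this holds for $T_0$ sufficiently small, and by \cite{AleSandro} the multi-products themselves lie in $\Phr(c_h|t-s|)$ for a uniform $c_h$, of class $C^1$ in $(t,s)$. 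The resulting amplitudes remain in $C^1(\Delta_{T_0}, S^{0,0}(\R^d))$ by \eqref{eq:estsna}.

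For convergence, the combined use of the seminorm bound \eqref{eq:estsna} (which yields at worst geometric growth of the amplitude seminorms) and the factorial gain $|t-s|^N/N!$ from the $N$-fold time integration produces an absolutely summable telescoping series $E_0+\sum_N(E_{N+1}-E_N)$ in every operator topology $\scrL(H^{z,\zeta}(\R^d),H^{z,\zeta}(\R^d))$, via Theorem \ref{thm:sobcont}. Passing to the limit, one verifies directly that the limit satisfies \eqref{eq:U1} and $E(s,s)=I$, so the uniqueness clause (5) of Theorem \ref{thm:fundsolabs} identifies it with the fundamental solution $E(t,s)$. The main obstacle will be the quantitative control of amplitudes under iterated composition: the $N$-fold application of \eqref{eq:estsna} must be shown to be dominated by the factorial gain from the time integrations, uniformly in the order of the seminorms being controlled, so as to obtain convergence on the full Sobolev-Kato scale rather than at a fixed level of regularity. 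This is precisely where the fine multi-product estimates of \cite{AleSandro}, together with the uniform regularity of the phases furnished by Proposition \ref{trovala!}, become indispensable.
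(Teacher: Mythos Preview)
Your proposal is essentially correct and follows the same strategy the paper intends: the paper does not give a proof of this theorem but explicitly states that it was proved in \cite{AleSandro}, ``in the spirit of the approach followed in \cite{kumano-go}'', which is precisely the Picard/Volterra iteration with multi-products of $SG$ FIOs that you outline. Your identification of the key technical point---controlling the growth of amplitude seminorms under iterated composition against the factorial gain from the time integrals, uniformly on the full Sobolev--Kato scale---is exactly the content of the fine multi-product estimates of \cite{AleSandro}, so there is nothing to add at the level of strategy.
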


Indeed, there are various techniques to switch from a Cauchy problem for an hyperbolic operator $L$ of order $M\ge 1$ to a 
Cauchy problem for a first order system \eqref{sys}, see, e.g., \cite{cordes, Coriasco:998.2, Morimoto:1}. In the approach we follow here,
which is the same used in \cite{Coriasco:998.3}, the key results for this aim
are the next Proposition \ref{prop:mlpf}, an adapted version of the so-called 
Mizohata Lemma of Perfect Factorization\footnote{See also \cite{kumano-go, Mizohata:1, Mizohata:2}, for the original version of such results.}, and the subsequent Lemma \ref{lem:Dt}. They are formulated for an hyperbolic operator $L$ with roots of constant multiplicities, and, of course, they hold true also in the more restrictive case of a strictly hyperbolic 
operator $L$, which coincides with the situation where $\displaystyle l=\max_{j=1,\dots,n}l_j=1\Leftrightarrow n=m$.

\begin{proposition}
\label{prop:mlpf}
Let $L$ be a hyperbolic operator with constant multiplicities
$l_{j}$, $j=1,\dots,n \le m$. Denote by $\theta_j\in G_j$, $j=1,\dots,n$, the distinct real roots of $\caL_m$ in \eqref{roots}.
Then, it is possible to factor $L$ as
\begin{equation}
\label{eq:Lfactor}
L = L_{n} \cdots L_{1} + \sum_{j=1}^m \Op(r_{j}(t)) D_{t}^{m-j}
\end{equation}
with
\begin{align}
\label{eq:Lj}
L_{j}&= (D_{t} - \Op(\theta_{j}(t)))^{l_{j}} + \sum_{k=1}^{l_{j}}
           \Op(h_{jk}(t)) \, (D_{t} - \Op(\theta_{j}(t)))^{l_{j}-k},
\\
\label{eq:hjk}
h_{jk} &\in C^\infty([0,T], S^{k-1, k-1}(\R^d)),
\quad
r_{j} \in C^\infty([0,T],S^{-\infty,-\infty}(\R^d)),
\quad
j=1, \dots, n, k = 1, \dots, l_{j}.
\end{align}
\end{proposition}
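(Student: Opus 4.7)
The plan is to proceed by induction on the number $n$ of distinct root groups, extracting one factor at a time from the right and determining the correction coefficients $h_{jk}$ by matching $SG$-symbols level by level. For the base case $n=1$, a single real root $\theta_1$ of multiplicity $m$ is present, and a direct computation via the $SG$-composition formula \eqref{eq:comp} yields
\[
(D_t - \Op(\theta_1(t)))^k = D_t^k + \sum_{j=1}^{k}\Op(g_{jk}(t))\,D_t^{k-j},
\qquad g_{jk}\in C^\infty([0,T], S^{j,j}(\R^d)),
\]
because each commutator $[D_t,\Op(\theta_1)]$ and each symbol cross-term produced by \eqref{eq:comp} lowers the number of $D_t$-factors by one while preserving the $(1,1)$-order of $\theta_1$. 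Comparing coefficients with the expression of $L$ as a polynomial in $D_t$, and using that the $SG$-principal symbol of $L$ equals $(\tau-\theta_1)^m$, produces coefficients $h_{1k}\in C^\infty([0,T], S^{k-1,k-1}(\R^d))$ with the claimed structure, modulo a lower-order remainder to be cleaned up in the last step.

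For the inductive step, assuming the statement for operators with fewer than $n$ distinct root groups, I would first construct $L_1$ attached to $G_1$ as in the base case and perform a right $SG$-pseudodifferential division $L = Q\circ L_1 + R_1$, in which $Q$ has $SG$-order $(m-l_1,m-l_1)$ and $R_1$ has $D_t$-degree strictly less than $l_1$ and coefficients of suitably lower $SG$-order. The coefficients of $Q$ are determined one $SG$-symbolic level at a time from \eqref{eq:comp}, and at the principal level $Q$ has symbol $\prod_{j=2}^n(\tau-\theta_j)^{l_j}$, so it is again a hyperbolic operator of the type covered by the inductive hypothesis; applying the latter to $Q$ supplies the remaining factors $L_2,\ldots,L_n$. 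The critical place where \eqref{def:constmult} enters is the symbol-matching: the resulting algebraic equations for the symbols of $Q$ and for the corrections $h_{jk}$ involve the differences $\theta_j-\theta_k$ between roots lying in distinct groups, and by \eqref{def:constmult} each such difference is $SG$-elliptic of order $(1,1)$, hence admits a parametrix in $\Op(S^{-1,-1}(\R^d))$. A careful order count then shows $h_{jk}\in C^\infty([0,T],S^{k-1,k-1}(\R^d))$ as required.

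Once the recursive construction is carried out up to any prescribed level, the residual $L - L_n\circ\cdots\circ L_1$ has $SG$-order that can be made arbitrarily low by iterating the correction procedure; a standard Borel-type asymptotic summation in the $SG$-calculus then absorbs all but a genuinely smoothing tail into the $h_{jk}$ already in place, leaving a remainder of the form $\sum_{j=1}^m\Op(r_j(t))\,D_t^{m-j}$ with $r_j\in C^\infty([0,T],S^{-\infty,-\infty}(\R^d))$, as claimed in \eqref{eq:Lfactor}. The main obstacle is the twofold bookkeeping specific to the $SG$ framework: at every step of the recursion one must simultaneously control both the $\x$- and the $\csi$-growth of the correction symbols, and verify that each $h_{jk}$ sits \emph{exactly} in $S^{k-1,k-1}(\R^d)$ rather than in a larger class. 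This hinges on the \emph{joint} ellipticity in $S^{1,1}$ of the root differences between distinct groups---a property which, in the $SG$ framework, is precisely what \eqref{def:constmult} asserts and which has no counterpart in the uniform, single-order setting.
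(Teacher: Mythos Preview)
The paper does not actually supply a proof of this proposition: it is stated as the $SG$-adaptation of the classical \emph{Mizohata Lemma of Perfect Factorization}, with a footnote pointing to \cite{kumano-go, Mizohata:1, Mizohata:2} for the original result and implicitly to \cite{Coriasco:998.3} for the $SG$ version. Your outline---induction on the number $n$ of distinct root groups, right pseudodifferential division $L = Q\circ L_1 + R_1$, use of the $SG$-ellipticity of the differences $\theta_j-\theta_k$ for $j\neq k$ (which is exactly what \eqref{def:constmult} guarantees) to control the orders of the correction symbols $h_{jk}$, and a final Borel-type asymptotic summation to push the residual into $S^{-\infty,-\infty}$---is precisely the standard strategy carried out in those references, transported to the bi-filtered $SG$ setting.

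One small point of presentation: in your inductive step you write ``first construct $L_1$ attached to $G_1$ as in the base case'', but the base case treats a \emph{single} root of full multiplicity $m$, whereas here $\theta_1$ has multiplicity $l_1<m$. The cleaner way to phrase the argument (and the way it is done in the cited sources) is not to fix $L_1$ in advance and then divide, but to perform the division by $(D_t-\Op(\theta_1(t)))^{l_1}$ first, obtaining a remainder of $D_t$-degree $<l_1$, and then \emph{read off} the $h_{1k}$ from that remainder; the ellipticity of $\theta_j-\theta_1$ for $j\ge 2$ is what forces the remainder coefficients down to order $(k-1,k-1)$ rather than the naive $(k,k)$. This is only a reorganisation of what you wrote, not a genuine gap.
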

\noindent
The following corollary is an immediate consequence of Proposition \ref{prop:mlpf}, and is proved
by means of a reordering of the distinct roots $\theta_{j}$, $j=1,\dots,n$.
\begin{corollary}\label{cor:resort}
Let $\perm_{j}$, $j=1, \dots, n$, denote the reordering of the $n$-tuple $(1, \dots, n)$, given, for $k = 1, \dots, n$, by 
\begin{equation}\label{eq:resort}
\perm_{j}(k) =
  \begin{cases}
        j + k -1       & \mbox{for $j + k \le n+1$},
      \\ 
        j + k - n - 1 & \mbox{for $j + k  >  n+1$},
  \end{cases}
\end{equation}
That is, for $n\ge2$, $\perm_{1} = (1, \dots, n), \perm_{2} = (2, \dots, n,1),\dots, \perm_{n} = (n, 1, \dots, n-1)$. 
Then, under the same hypotheses of  Proposition \ref{prop:mlpf}, we have, for any $p=1, \dots, n$,
\begin{equation}
\label{eq:Lfactorbis}
L = L^{(p)}_{\perm_{p}(n)} \dots L^{(p)}_{\perm_{p}(1)} + 
    \sum_{j=1}^m \Op(r^{(p)}_{j}(t)) D_{t}^{m-j}
\end{equation}
with
\begin{equation}
\label{eq:Ljbis}
L^{(p)}_{j}= (D_{t} - \Op(\theta_{j}(t)))^{l_{j}} + \sum_{k=1}^{l_{j}}
           \Op(h^{(p)}_{jk}(t)) \, (D_{t} - \Op(\theta_{j}(t)))^{l_{j}-k},
\end{equation}
\begin{equation}
\label{eq:hjkbis}
h^{(p)}_{jk} \in C^\infty([0,T], S^{k-1, k-1}(\R^d)),
j=1, \dots, n, k = 1, \dots, l_{j},
\quad
r^{(p)}_{j} \in C^\infty([0,T],S^{-\infty,-\infty}(\R^d)),
j=1, \dots, m.
\end{equation}
\end{corollary}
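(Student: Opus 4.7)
The plan is to derive the stated factorization by direct application of Proposition \ref{prop:mlpf} after a relabeling of the distinct roots. Fix $p \in \{1,\dots,n\}$ and introduce the permuted sequence of characteristic roots $\widetilde{\theta}_k := \theta_{\perm_p(k)}$ with associated multiplicities $\widetilde{l}_k := l_{\perm_p(k)}$, for $k = 1,\dots,n$. Because the hypothesis \eqref{def:constmult} makes no reference to the ordering of the $n$ groups $G_1,\dots,G_n$, and since $\widetilde{l}_1 + \cdots + \widetilde{l}_n = l_1 + \cdots + l_n = m$, the operator $L$ is still hyperbolic with constant multiplicities with respect to this relabeling.

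Next, I would apply Proposition \ref{prop:mlpf} to $L$ using the relabeled data $(\widetilde{\theta}_k, \widetilde{l}_k)_{k=1}^n$ in place of $(\theta_j, l_j)_{j=1}^n$. This yields a factorization
\[
L = \widetilde{L}_n \widetilde{L}_{n-1} \cdots \widetilde{L}_1 + \sum_{j=1}^m \Op(\widetilde{r}_j(t))\, D_t^{m-j},
\]
where each
\[
\widetilde{L}_k = (D_t - \Op(\widetilde{\theta}_k(t)))^{\widetilde{l}_k} + \sum_{i=1}^{\widetilde{l}_k} \Op(\widetilde{h}_{ki}(t))\, (D_t - \Op(\widetilde{\theta}_k(t)))^{\widetilde{l}_k - i},
\]
with $\widetilde{h}_{ki} \in C^\infty([0,T], S^{i-1,i-1}(\R^d))$ for $k=1,\dots,n$, $i = 1,\dots,\widetilde{l}_k$, and $\widetilde{r}_j \in C^\infty([0,T], S^{-\infty,-\infty}(\R^d))$ for $j=1,\dots,m$.

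I would then translate back to the original labeling by defining $L^{(p)}_{\perm_p(k)} := \widetilde{L}_k$, $h^{(p)}_{\perm_p(k),i} := \widetilde{h}_{ki}$, and $r^{(p)}_j := \widetilde{r}_j$. Since $\perm_p$ is a bijection of $\{1,\dots,n\}$, each operator $L^{(p)}_j$ and each symbol $h^{(p)}_{jk}$ is well-defined and fulfils \eqref{eq:Ljbis}--\eqref{eq:hjkbis}. Rewriting the product $\widetilde{L}_n\cdots\widetilde{L}_1$ in terms of the $L^{(p)}_{\perm_p(k)}$ yields precisely \eqref{eq:Lfactorbis}.

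The only subtle point to verify is that the proof of Proposition \ref{prop:mlpf} does not privilege any specific ordering of the distinct roots. Inspection of the Mizohata perfect-factorization argument shows that at each step the root to be factored off can be chosen freely among those not yet processed, so the procedure applies verbatim to the sequence $(\widetilde{\theta}_k)_{k=1}^n$. Hence no genuine obstacle arises, and Corollary \ref{cor:resort} follows as an immediate consequence of Proposition \ref{prop:mlpf}. Note that the cyclic structure of $\perm_p$ is not used in the argument itself; it is simply the family of reorderings that will be needed in the subsequent construction of the fundamental solution.
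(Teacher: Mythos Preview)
Your proposal is correct and matches the paper's approach exactly: the paper states that the corollary ``is an immediate consequence of Proposition \ref{prop:mlpf}, and is proved by means of a reordering of the distinct roots $\theta_{j}$, $j=1,\dots,n$,'' which is precisely the relabeling argument you have written out in detail.
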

\begin{remark}
	Of course, for $n=1$, we only have the single ``reordering'' $\varpi_1=(1)$, $l_1=l=m$, and 
	\[
		L = L^{(1)}_{1} +  \sum_{j=1}^m \Op(r^{(1)}_{j}(t)) D_{t}^{m-j}
	\]
with
\begin{align*}
L^{(1)}_{1}&= (D_{t} - \Op(\theta_{1}(t)))^m + \sum_{k=1}^{m}
           \Op(h^{(1)}_{1k}(t)) \, (D_{t} - \Op(\theta_{1}(t)))^{m-k},
\\
h^{(1)}_{1k} &\in C^\infty([0,T], S^{k-1, k-1}(\R^d)), 
k = 1, \dots, m,
\quad
r^{(1)}_{j} \in C^\infty([0,T],S^{-\infty,-\infty}(\R^d)),
j=1,\dots,m
\end{align*}
\end{remark}
With inductive procedures similar to those performed  in 
\cite{Cicognani-Zanghirati:997.1, Cicognani-Zanghirati:998.1} and 
\cite{Mizohata:2}, respectively, it is possible to prove the following Lemma \ref{lem:Dt}.
\begin{lemma}\label{lem:Dt}
Under the same hypotheses of Proposition \ref{prop:mlpf},
for all $k=0, \dots, m-1$, it is possible to
find symbols $\varsigma_{kpq} \in C^\infty([0,T],S^{k-q+l_{p}-n,k-q+l_p-n}(\R^d))$,
$p = 1, \dots, n$, $q = 0, \dots, l_{p}-1$, such that, for all $t\in[0,T]$,
\begin{equation*}
\theta^k = \sum_{p=1}^n 
                \left[\sum_{q=0}^{l_{p}-1} \varsigma_{kpq}(t) (\theta - \theta_{p}(t))^q\right]
                \cdot
                \left[\prod_{\afrac{1\le j \le n}{j\not=p}} (\theta - \theta_{j}(t))^{l_{j}}
                \right].
\end{equation*}
\end{lemma}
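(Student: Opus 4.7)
The plan is to prove the claim as a Chinese Remainder / Hermite interpolation identity in the polynomial ring in the auxiliary variable $\theta$ over an appropriate ring of symbols. For $p = 1,\ldots,n$ let
$$\Pi_p(\theta) := \prod_{\substack{1\le j\le n \\ j\neq p}}(\theta - \theta_j(t))^{l_j},$$
so that the stated identity reads $\theta^k = \sum_p R_p^{(k)}(\theta)\,\Pi_p(\theta)$ with $R_p^{(k)}(\theta) = \sum_{q=0}^{l_p-1}\varsigma_{kpq}(t)(\theta-\theta_p(t))^q$, a polynomial of degree less than $l_p$ in $\theta$. The constant multiplicities hypothesis \eqref{def:constmult} implies that for every $(t,x,\xi)\in [0,T]\times \R^{2d}$ and every $p\neq j$ the symbol $(\theta_p-\theta_j)(t,x,\xi)$ is elliptic in $S^{1,1}(\R^d)$; consequently $(\theta_p-\theta_j)^{-1}\in S^{-1,-1}$, $\Pi_p(\theta_p(t))$ is elliptic of order $(m-l_p,m-l_p)$ with inverse in $S^{l_p-m,l_p-m}$, and the polynomials $(\theta-\theta_p(t))^{l_p}$, $p=1,\ldots,n$, are pairwise coprime as polynomials in $\theta$.

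Exploiting coprimality, I would \emph{define} the coefficients as the Taylor coefficients of the rational function $\theta\mapsto\theta^k/\Pi_p(\theta)$ at $\theta = \theta_p(t)$, namely
\begin{equation*}
\varsigma_{kpq}(t) := \frac{1}{q!}\left.\frac{d^q}{d\theta^q}\!\left(\frac{\theta^k}{\Pi_p(\theta)}\right)\right|_{\theta=\theta_p(t)},\qquad p=1,\ldots,n,\ q=0,\ldots,l_p-1,
\end{equation*}
which is well defined because $\Pi_p(\theta_p(t))$ is elliptic. With this choice, for each $p$ the product $R_p^{(k)}(\theta)\,\Pi_p(\theta)$ is congruent to $\theta^k$ modulo $(\theta-\theta_p(t))^{l_p}$, by construction of the Taylor expansion. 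At the same time, for $p'\neq p$ the product $R_p^{(k)}\,\Pi_p$ is divisible by $(\theta - \theta_{p'}(t))^{l_{p'}}$, because this factor appears in $\Pi_p$. Summing over $p$, the difference $\theta^k - \sum_p R_p^{(k)}(\theta)\,\Pi_p(\theta)$ is divisible by each $(\theta-\theta_p(t))^{l_p}$, and hence, by pairwise coprimality, by the full product $\prod_p(\theta-\theta_p(t))^{l_p}$, a polynomial of degree $m$. Being a polynomial of degree at most $m-1$, it must vanish identically, which establishes the claimed identity.

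The last step will be the verification of the $SG$-orders of the $\varsigma_{kpq}$. Applying the Leibniz rule to the defining formula yields
$$\varsigma_{kpq}(t) = \sum_{r+s=q}\binom{k}{r}\theta_p(t)^{k-r}\cdot\frac{1}{s!}\left.\frac{d^s}{d\theta^s}\Pi_p(\theta)^{-1}\right|_{\theta=\theta_p(t)}.$$
A logarithmic-differentiation induction on $s$, based on the relation $\frac{d}{d\theta}\Pi_p(\theta)^{-1} = -\Pi_p(\theta)^{-1}\sum_{j\neq p}l_j/(\theta-\theta_j)$ and on the ellipticity of $(\theta_p-\theta_j)$ in $S^{1,1}$, shows that the $s$-th derivative of $\Pi_p^{-1}$, evaluated at $\theta=\theta_p(t)$, belongs to $C^\infty([0,T],S^{l_p-m-s,\,l_p-m-s}(\R^d))$. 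Coupling this with $\theta_p^{k-r}\in C^\infty([0,T],S^{k-r,k-r})$ gives
$$\varsigma_{kpq}\in C^\infty\!\big([0,T],S^{k-q+l_p-m,\,k-q+l_p-m}(\R^d)\big)\subseteq C^\infty\!\big([0,T],S^{k-q+l_p-n,\,k-q+l_p-n}(\R^d)\big),$$
which matches the statement, since $n\le m$. The smooth $t$-dependence is inherited from that of the $\theta_p$. The main technical point I foresee is the careful bookkeeping of the $SG$-orders through repeated $\theta$-differentiation and through the Leibniz/Faà di Bruno expansion, but once the logarithmic-derivative relation is exploited the estimate becomes mechanical, and no hypothesis beyond the separation of the distinct roots \eqref{def:constmult} is actually needed.
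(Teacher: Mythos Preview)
Your argument is correct. The paper itself does not give a proof of this lemma; it only points to ``inductive procedures similar to those performed in \cite{Cicognani-Zanghirati:997.1, Cicognani-Zanghirati:998.1} and \cite{Mizohata:2}'', i.e.\ an induction (presumably on $k$, or on the degree of the interpolation problem), whereas you give a direct, closed-form construction via Hermite interpolation / Chinese Remainder in $\theta$. Your explicit formula $\varsigma_{kpq}=\frac{1}{q!}\partial_\theta^q(\theta^k/\Pi_p)|_{\theta=\theta_p}$ makes the order computation transparent: the key ingredients are that each $\theta_p-\theta_j$ is elliptic in $S^{1,1}$ by \eqref{def:constmult} (so $\Pi_p(\theta_p)^{-1}\in S^{l_p-m,l_p-m}$) and that each $\theta$-derivative of $\Pi_p^{-1}$, via the logarithmic derivative, lowers the $SG$-order by exactly one. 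Your Leibniz expansion with coefficient $\binom{k}{r}\frac{1}{s!}$ is indeed the correct combinatorics, and the degree argument (difference of degree $\le m-1$ divisible by a degree-$m$ polynomial) is the standard uniqueness step for Hermite interpolation.

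One remark worth recording: your computation actually produces the sharper order $\varsigma_{kpq}\in C^\infty([0,T],S^{k-q+l_p-m,k-q+l_p-m})$, which you then embed into $S^{k-q+l_p-n,k-q+l_p-n}$ using $n\le m$. This is strictly better than what the lemma asserts whenever $n<m$ (i.e.\ whenever at least one multiplicity $l_p$ exceeds $1$). The inductive route cited by the paper presumably tracks the orders less tightly; your direct approach makes the optimal $m$ rather than $n$ visible, which would in fact be useful downstream (cf.\ Lemma~\ref{lem:solregbis}, where the improvement from $n$ to $m$ is what matters for the regularity gain in the constant-multiplicity case).
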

In the case of strict hyperbolicity, or, more generally, hyperbolicity with constant multiplicities, 
we can actually ``decouple'' the equations in \eqref{sys} into $n$ blocks of smaller dimensions, 
by means of the so-called \textit{perfect diagonalizer}, an element of
$C^\infty([0,T],\Op(S^{0,0}))$. Thus, the solution of \eqref{sys} can be reduced to the solution of $n$ independent smaller systems.
The principal part of the coefficient matrix of each one of such decoupled subsystems 
admits then a single distinct eigenvalue of maximum multiplicity,
so that it can be treated, essentially, like a
scalar $SG$ hyperbolic equations of first order. Explicitely, see, e.g., \cite{Coriasco:998.2,kumano-go}, 
\begin{theorem}\label{thm:perfd}
	Assume that the system \eqref{sys} is hyperbolic with constant multiplicities $\nu_j$, $j=1,\dots,N$, $\nu_1+\cdots+\nu_n=\nu$,
	with diagonal principal part $\kappa_1\in C^\infty([0,T],S^{1,1}(\R^d))$
	and $\kappa_0\in C^\infty([0,T], S^{0,0}(\R^d))$, both of them ($\nu\times \nu$)-dimensional matrices. 
	Then, there exist ($\nu\times\nu$)-dimensional matrices
	$\omega\in C^\infty([0,T],S^{0,0}(\R^d))$ and $\widetilde{\kappa}_0\in C^\infty([0,T],S^{0,0}(\R^d))$ such that
	\[
		\det(\omega)\asymp 1\Rightarrow \omega^{-1}\in	 C^\infty([0,T],S^{0,0}(\R^d)),
		\quad
		\widetilde{\kappa}_0=\mathrm{diag}(\widetilde{\kappa}_{01}, \dots, \widetilde{\kappa}_{0n}),
		\;
		\widetilde{\kappa}_{0j} \text{ ($\nu_j\times \nu_j$)-dimensional matrix},
	\]
	and
	\begin{equation}\label{eq:perfdiag}
		(D_t-\Op(\kappa_1(t))-\Op(\kappa_0(t)))\Op(\omega(t))
		-
		\Op(\omega(t))(D_t-\Op(\kappa_1(t))-\Op(\widetilde{\kappa}_0(t)))
		\in C^\infty([0,T],\Op(S^{-\infty,-\infty}(\R^d)).
	\end{equation}
\end{theorem}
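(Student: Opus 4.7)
The plan is to construct $\omega$ and $\widetilde{\kappa}_0$ by asymptotic expansion in the $SG$ calculus, exploiting the block structure forced by the constant multiplicity hypothesis. Using $[D_t,\Op(\omega)]=-i\Op(\partial_t\omega)$, condition \eqref{eq:perfdiag} is equivalent to
\[
-i\Op(\partial_t\omega)+[\Op(\omega),\Op(\kappa_1)]-\Op(\kappa_0)\Op(\omega)+\Op(\omega)\Op(\widetilde{\kappa}_0)\in C^\infty([0,T],\Op(S^{-\infty,-\infty}(\R^d))).
\]
I would seek $\omega\sim\sum_{j\geq 0}\omega_{-j}$ and $\widetilde{\kappa}_0\sim\sum_{j\geq 0}\widetilde{\kappa}_{0,-j}$ with $\omega_{-j},\widetilde{\kappa}_{0,-j}\in C^\infty([0,T],S^{-j,-j}(\R^d))$, each $\widetilde{\kappa}_{0,-j}$ block-diagonal with blocks of sizes $\nu_1,\dots,\nu_n$, and then use Borel-type asymptotic summation in the $SG$ calculus to realize them as bona fide symbols.

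The top-order condition $[\omega_0,\kappa_1]=0$ forces $\omega_0$ to be block-diagonal; the simplest choice is $\omega_0=I$. Using the composition formula \eqref{eq:comp}, the equation at order $(0,0)$ reduces to
\[
[\omega_{-1},\kappa_1]=\kappa_0-\widetilde{\kappa}_{0,0},
\]
which entrywise reads $(\omega_{-1})_{ij}(\kappa_{1,jj}-\kappa_{1,ii})=(\kappa_0)_{ij}-(\widetilde{\kappa}_{0,0})_{ij}$. For $i,j$ in the same block one has $\kappa_{1,ii}=\kappa_{1,jj}$, so I set $(\widetilde{\kappa}_{0,0})_{ij}=(\kappa_0)_{ij}$; for $i,j$ in distinct blocks, the separation hypothesis \eqref{def:constmult} makes $\kappa_{1,jj}-\kappa_{1,ii}$ $SG$-elliptic of order $(1,1)$, so I define $(\omega_{-1})_{ij}\in S^{-1,-1}$ by division and set $(\widetilde{\kappa}_{0,0})_{ij}=0$. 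The block-diagonal part of $\omega_{-1}$ is a free parameter that I fix to zero by normalization. The inductive step has the same shape: at order $(-k,-k)$ one obtains
\[
[\omega_{-k-1},\kappa_1]+R_{-k}=\widetilde{\kappa}_{0,-k},
\]
for a known $R_{-k}\in C^\infty([0,T],S^{-k,-k})$ produced by \eqref{eq:comp} and $\partial_t$ acting on the already determined $\omega_{-j}$ and $\widetilde{\kappa}_{0,-j}$; splitting this into its block-diagonal part (which defines $\widetilde{\kappa}_{0,-k}$) and its off-block-diagonal part (which defines $\omega_{-k-1}$ by division) advances the recursion.

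After asymptotic summation yielding $\omega,\widetilde{\kappa}_0\in C^\infty([0,T],S^{0,0})$, the left-hand side of \eqref{eq:perfdiag} has symbol in $S^{-\infty,-\infty}$ and therefore belongs to $C^\infty([0,T],\Op(S^{-\infty,-\infty}))$ by Theorem \ref{thm:smoothing}. Since $\omega=I+r$ with $r\in S^{-1,-1}$, one has $\det\omega(x,\xi)=1+O(\langle x\rangle^{-1}\langle\xi\rangle^{-1})$, so $|\det\omega|\geq 1/2$ outside a compact subset of $\R^{2d}$; a compactly supported (hence smoothing) correction arranges $\det\omega\asymp 1$ everywhere without affecting \eqref{eq:perfdiag}, and the resulting $SG$-ellipticity of order $(0,0)$ gives $\omega^{-1}\in C^\infty([0,T],S^{0,0}(\R^d))$. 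The main technical obstacle is the solvability of the off-block-diagonal part of the commutator equation at every order: it relies crucially on the separation condition \eqref{def:constmult}, which ensures the $SG$-ellipticity of the divisors $\kappa_{1,jj}-\kappa_{1,ii}$ across different blocks; without this, the construction of $\omega_{-k-1}$ would break down and the whole recursion would collapse.
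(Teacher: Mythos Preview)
The paper does not actually prove Theorem \ref{thm:perfd}; it is stated with a reference to \cite{Coriasco:998.2,kumano-go} for the proof (``Explicitely, see, e.g., \cite{Coriasco:998.2,kumano-go}''). Your argument is precisely the standard construction of the perfect diagonalizer found in those references: rewrite \eqref{eq:perfdiag} as a commutator equation, expand $\omega$ and $\widetilde{\kappa}_0$ asymptotically, and at each step split into block-diagonal part (absorbed into $\widetilde{\kappa}_0$) and off-block-diagonal part (solved for the next $\omega_{-k-1}$ by dividing by the $SG$-elliptic differences $\kappa_{1,jj}-\kappa_{1,ii}$, available thanks to \eqref{def:constmult}). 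The ellipticity-of-$\det\omega$ argument via $\omega=I+S^{-1,-1}$ plus a compactly supported (hence $S^{-\infty,-\infty}$) correction is also standard and correct. So your proposal is essentially the proof the paper defers to the literature.
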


%
\section{Function-valued solutions for semilinear SPDEs.}\label{sec:nonlin}
In this section we state and prove our main result of existence and uniqueness of a function-valued solution of the SPDE \eqref{eq:SPDE}, under suitable assumptions of hyperbolicity for the operator $L$, see \eqref{elle}, \eqref{roots}.
We work here with a class of operators with more general symbols than the
(polynomial) ones appearing in \eqref{elle}.
Namely, we consider operators of the form 
\beqs\label{elle2}
	L=D_t^m-\displaystyle\sum_{j=1}^m A_j(t,x,D_x) D_t^{m-j},
\eeqs
where $A_j(t)=\Op(a_j(t))$ are $SG$ pseudo-differential operators with symbols $a_{j}\in C^\infty([0,T], S^{j,j})$, $1\leq j\leq m$.
Notice that, of course, \eqref{elle} is a particular case of \eqref{elle2}.
The hyperbolicity condition on $L$ becomes
\begin{equation}\label{roots2}
	\caL_m(t,x,\tau,\xi)= \tau^m-\displaystyle\sum_{j=1}^m\tilde A_{j}(t,x,\xi)\tau^{m-j}=\prod_{j=1}^m\left(\tau-\tau_j(t,x,\xi)\right),
\end{equation}
where $\tilde A_j$ stands for the principal part of $A_j$, with characteristic roots 
$\tau_j(t,x,\xi)\in\R$, $\tau_j\in C^\infty([0,T];S^{1,1})$.
\\
Let us then consider the Cauchy problem 
\beqs\begin{cases}\label{cp}
 Lu(t,x) = \gamma(t,x, u(t,x)) + \sigma(t,x, u(t,x))\dot{\Xi}(t,x),\qquad (t,x)\in(0,T]\times\R^d
 \\
 D_t^j u(0,x)=u_j(x),\qquad x\in\R^d,\ 0\leq j\leq m-1,
\end{cases}
\eeqs 
where $L$ has the form \eqref{elle2}, under conditions \eqref{roots2} and either \eqref{def:strict} or \eqref{def:constmult} or \eqref{def:involutive}. We also assume that
$\gamma,\sigma:[0,+\infty)\times\R^d\times \R\longrightarrow \R$ are measurable functions, (at least locally-)Lipschitz-continuous, in our functional setting, with respect to the third variable, see Definition \ref{def:lip} and Theorem \ref{thm:semilinearweak} below for  the precise hypotheses. Here $\dot \Xi$ is the stochastic noise described in Subsection \ref{subnoise}.

We are interested in finding conditions on $L$, on the stochastic noise $\dot \Xi$, and on $\sigma, \gamma, u_j$, $j=0,\dots,m-1$,
such that \eqref{cp} admits a unique function-valued solution of the form \eqref{eq:mildsolutionSPDE}, following the stochastic integration theory presented in Subsection \ref{subH}.
To this aim, we need first to construct the distribution kernel $\Lambda$ that we are going to deal with. This is performed in Subsection \ref{subs:reduction}, through the following steps:
\begin{itemize}
	\item[-] reduction of the (formal) Cauchy problem 
	\beqs\begin{cases}\label{cplin}
 	Lu(t) = g(t)\qquad t\in(0,T]
 	\\
 	D_t^j u(0)=u_j,\qquad 0\leq j\leq m-1,
	\end{cases}
	\eeqs 
	where $L$ is the operator in \eqref{cp} and $g$ is a short notation for the right-hand side, to an equivalent first order system;
	\item[-] construction of the fundamental solution $E(t,s)$ for the system byTheorem \ref{thm:fundsolabs}, and then of its (formal) solution, following Section \ref{sec:fio};
	\item[-] construction of the distribution kernel $\Lambda$ and of the (formal) solution to \eqref{cplin} thanks to the equivalence of \eqref{cplin} and the corresponding first order system.   
\end{itemize}
For the sake of brevity, we will describe here the reduction procedure only in the case \ref{hyp:cm}, being \ref{hyp:st} a simpler subcase of \ref{hyp:cm} with $l=1$. In the case \ref{hyp:in}, we will shortly address some technical points that we are currently still investigating. Notice that all the results on $SG$-hyperbolic differential operators recalled in the previous Section \ref{sec:fio}, in particular, Proposition \ref{prop:mlpf} and Lemma \ref{lem:Dt}, still hold true for $SG$-hyperbolic operators of the form \eqref{elle2}. We adopt the same terminology and definitions also for this more general operators, with straightforward modifications, where needed.

Then, we need to understand the noise $\Xi$ in terms of a canonically associated Hilbert space $\mathcal H_\Xi$, so that we can define the stochastic integral with respect to a cylindrical Wiener 
process on $\mathcal H_\Xi$. This will be done in Subsection \ref{subs:pez}.

Finally, in Subsection \ref{subs:mainslin} we state and prove the main result of this paper, namely Theorem \ref{thm:semilinearweak}.
The conditions on the stochastic noise (explained in Subsection \ref{subs:pez}) will be given on the spectral measure 
$\mu$ corresponding to the correlation measure $\Gamma$ related to $\dot \Xi$. 
The operator $L$ is assumed to be hyperbolic (either strictly hyperbolic or weakly hyperbolic with constant multiplicity). The main conditions on $\sigma$ and $\gamma$ are Lipschitz continuity assumptions, which are typical in semilinear problems. 
We will also state in Theorem \ref{thm:semilinearinvolutive} a further result, corresponding to the involutive case \ref{hyp:in}, whose detailed proof will be given elsewhere.

\begin{remark} With respect to the existing literature, in particular \cite{peszat}, we allow here for general hyperbolic equations of higher orders, coefficients depending both on time and space, and possibly with a polynomial growth with respect to $x$. We remark that in the strictly hyperbolic case we obtain the compatibility condition \eqref{eq:meascm} with $l=1$ (set $l=1$ in Theorem \ref{thm:semilinearweak}), which exactly corresponds, for $m=2$, to the one obtained in \cite{peszat}.
\end{remark}

The proof of the main Theorem \ref{thm:semilinearweak} starts with the construction of the (formal) solution $u$ to \eqref{cp}, and then deals with existence and uniqueness of $u$ under the assumption \ref{hyp:cm} through a fixed point theorem, in view of the fundamental Lemma \ref{lem:weightedpesz} below, which is the key of the proof of the Theorem and the main result of Subsection \ref{subs:pez}.

%
\subsection{Reduction to a first order system and construction of the distribution kernel $\Lambda$.}\label{subs:reduction}
Let us denote by $\theta_j$, $j=1,\dots, n$, the distinct values of the roots $\tau_k$, $k=1, \dots, m$, and
with $\varpi_p$, $p=1,\dots,n$, the reorderings of the $n$-tuple $(1,\dots,n)$ defined in \eqref{eq:resort}.

The equivalence of the Cauchy problems for the equation $Lu(t)=g(t)$ and a $1\times 1$ system \eqref{sys} is then
trivial for $m=1$. For $m\ge 2$, we will now define a ($nm$)-dimensional vector of unknown $W$ and construct a corresponding linear first order hyperbolic system, with diagonal principal part
and constant multiplicities, equivalent to $Lu(t)=g(t)$. 

Let us set, for convenience, with 
the notation introduced in Corollary \ref{cor:resort},
\[
l^{(p,k)} =
   \begin{cases}
     0,                                     & k = 0,
    \\
    \displaystyle \sum_{1 \le j \le k} l_{\varpi_{p}(j)}, & 1 \le k \le n-1, \text{ if $n\ge2$},
    \\
    m,					& k=n,
   \end{cases}
\quad
L^{(p,k)} =
   \begin{cases}
     I,                                                & k = 0,
    \\
     L^{(p)}_{\varpi_{p}(k)} \cdots L^{(p)}_{\varpi_{p}(1)},   & 1 \le k \le n-1, \text{ if $n\ge2$},
   \end{cases}
\]
$p=1,\dots,n$, and define
\begin{equation}\label{eq:Wpbis}
W^{(p)}_{l^{(p,k)}+j+1}(t) = (D_t-\Op(\theta_{\varpi_{p}(k+1)}(t)))^j L^{(p,k)} u(t),
\quad
p= 1, \dots, n, \, k=0,\dots,n-1,\, j = 0, \dots, l_{\varpi_{p}(k+1)} - 1.
\end{equation}
Using Lemma \ref{lem:Dt}, we can express the $t$ derivatives of $u$ in terms of the components of $W$
from \eqref{eq:Wpbis}. In fact:

\begin{lemma}\label{lem:Dtkbis}
Under the hypotheses of Lemma \ref{lem:Dt}, for all $k=1,\dots,m-1$, $p=1,\dots,n$, it is possible to find symbols 
$w^{(p)}_{kj}\in C^\infty([0,T],S^{j,j}(\R^d))$,
$j=1,\dots,k$, such that, with the ($nm$)-dimensional vector $W$ defined in \eqref{eq:Wpbis},
\begin{equation}\label{eq:Dtkbis}
	D_t^ku(t)=\sum_{j=1}^k\Op(w^{(p)}_{kj}(t))W^{(p)}_{k-j+1}(t)+W^{(p)}_{k+1}(t).
\end{equation}
\end{lemma}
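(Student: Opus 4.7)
I would prove \eqref{eq:Dtkbis} by induction on $k$, with $p\in\{1,\ldots,n\}$ fixed throughout. For brevity write $\theta_s := \theta_{\varpi_p(s)}$ and $\lambda_s := l_{\varpi_p(s)}$. The base case $k=1$ is immediate in the subcase $\lambda_1\ge 2$: by \eqref{eq:Wpbis} one has $W^{(p)}_1 = u$ and $W^{(p)}_2 = (D_t - \Op(\theta_1))u$, whence $D_t u = W^{(p)}_2 + \Op(\theta_1)W^{(p)}_1$ with $w^{(p)}_{11} := \theta_1 \in C^\infty([0,T], S^{1,1})$. The subcase $\lambda_1 = 1$ is handled analogously, using that then $W^{(p)}_2 = L^{(p)}_{\varpi_p(1)} u = (D_t - \Op(\theta_1))u + \Op(h^{(p)}_{\varpi_p(1),1})u$ by \eqref{eq:Ljbis}, so that $D_t u = W^{(p)}_2 + \Op(\theta_1 - h^{(p)}_{\varpi_p(1),1})W^{(p)}_1$, and $\theta_1 - h^{(p)}_{\varpi_p(1),1}\in S^{1,1}$ since $h^{(p)}_{\varpi_p(1),1}\in S^{0,0}$ by \eqref{eq:hjkbis}.

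For the inductive step, assume \eqref{eq:Dtkbis} holds for some $1\le k < m-1$. I would apply $D_t$ to both sides, distribute using $D_t\Op(a(t)) = \Op(D_t a(t)) + \Op(a(t))D_t$, and thereby reduce the computation of $D_t^{k+1}u$ to expressing each $D_t W^{(p)}_i$, for $i = 1, \ldots, k+1$, as a linear combination of the $W^{(p)}_j$'s with symbol coefficients. When $i$ lies in the interior of the $s$-th block (i.e., $i = l^{(p,s-1)} + j + 1$ with $0 \le j < \lambda_s - 1$), the definition \eqref{eq:Wpbis} directly gives
\[
	D_t W^{(p)}_i = W^{(p)}_{i+1} + \Op(\theta_s)W^{(p)}_i.
\]
At a block boundary $i = l^{(p,s)}$ with $s < n$, I would use the explicit form \eqref{eq:Ljbis} of $L^{(p)}_{\varpi_p(s)}$, which yields
\[
	(D_t - \Op(\theta_s))^{\lambda_s} L^{(p,s-1)} u = W^{(p)}_{l^{(p,s)}+1} - \sum_{r=1}^{\lambda_s} \Op(h^{(p)}_{\varpi_p(s),r}) W^{(p)}_{l^{(p,s-1)} + \lambda_s - r + 1},
\]
and, combined with the identity $D_t W^{(p)}_{l^{(p,s)}} = (D_t-\Op(\theta_s))W^{(p)}_{l^{(p,s)}} + \Op(\theta_s) W^{(p)}_{l^{(p,s)}}$, expresses $D_t W^{(p)}_{l^{(p,s)}}$ in the desired form. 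Crucially, since $k+1\le m-1$, one never needs to cross the very last boundary $i = m$, so the equation $Lu = g$ itself never enters the computation---this is exactly why \eqref{eq:Dtkbis} is free of any right-hand side contribution.

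Substituting these transport identities back and collecting terms produces an expression of the form \eqref{eq:Dtkbis} for $D_t^{k+1}u$. The claimed symbol orders $w^{(p)}_{k+1,j}\in C^\infty([0,T], S^{j,j})$ follow from the $SG$-composition rule \eqref{eq:comp}, which guarantees that sums and products of symbols in $S^{a,a}$ and $S^{b,b}$ lie in $S^{a+b,a+b}$; combined with $\theta_s \in S^{1,1}$, $h^{(p)}_{jk}\in S^{k-1,k-1}$ and the inductive $w^{(p)}_{kj}\in S^{j,j}$, this yields the order-$(j,j)$ statement for each new coefficient, and the $C^\infty$-dependence on $t$ is transparent since all the ingredients are $C^\infty$ in $t$. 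The main obstacle is precisely the combinatorial bookkeeping at the block boundaries, together with the verification that the orders are not over-counted: the cleanest way is to observe that each $W^{(p)}_i$ arises as an $SG$-operator of order $(i-1,i-1)$ applied to $u$, so \eqref{eq:Dtkbis} is nothing but a matching-of-orders identity of total order $(k,k)$ on both sides, which forces the order $(j,j)$ for each $w^{(p)}_{kj}$.
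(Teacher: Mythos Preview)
Your inductive argument is correct and complete: the base case and the interior/boundary transport identities you write down are exactly right, and your order bookkeeping (each new coefficient lands in $S^{j,j}$) goes through once one observes that the boundary term $\Op(h^{(p)}_{\varpi_p(s),r})$ hits $W^{(p)}_{i-r+1}$ with $h\in S^{r-1,r-1}$, so the composite order matches. The final heuristic sentence (``each $W^{(p)}_i$ arises as an $SG$-operator of order $(i-1,i-1)$ applied to $u$'') is slightly loose, since $W^{(p)}_i$ is built from both $D_t$ and $x$-pseudodifferential factors, but you do not actually use it in the argument, and the explicit inductive count already settles the orders.

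The paper itself does not spell out a proof of this lemma; it only writes ``Using Lemma~\ref{lem:Dt}'' and states the result. The route implicit in that hint is different from yours: Lemma~\ref{lem:Dt} is a purely algebraic partial-fraction-type identity expressing $\theta^k$ in terms of the factors $(\theta-\theta_p)^q\prod_{j\neq p}(\theta-\theta_j)^{l_j}$, and the intended argument is to promote this symbolic identity to operators by substituting $D_t$ for $\theta$ and absorbing the commutator remainders into lower-order coefficients. Your approach bypasses this machinery entirely, working instead with the recursive structure of the $W^{(p)}_i$ directly. The advantage of your method is that it is self-contained and makes the order count transparent step by step; the advantage of the paper's implicit route is that it explains \emph{why} the coefficients $w^{(p)}_{kj}$ exist at once for all $k$, and connects naturally to the later use of Lemma~\ref{lem:Dt} in constructing the matrix $\varUpsilon_n$ of Lemma~\ref{lem:solregbis}.
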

\noindent
By the definition \eqref{eq:Wpbis}, we find the extension of \eqref{eq:Dtkbis} to $k=0$ in the form
$u(t)=W^{(p)}_1(t)$, $p=1,\dots,n$. 
Using Lemma \ref{lem:Dtkbis} we see that \eqref{eq:Lfactorbis}, \eqref{eq:Wpbis} and \eqref{eq:Dtkbis} give rise to a block diagonal linear system in the $nm$ unknown $W^{(p)}_{l^{(p,k)}+j+1}(t)$ with blocks labeled by $p=1,\dots,n$, of the type
\begin{equation}\label{eq:Wmbis}
	\left\{
	\begin{aligned}
	&\dots,
	\\
	(D_t-\Op(\theta_{\varpi_p(1)}(t)))W^{(p)}_{j+1}(t) &= \phantom{-}W^{(p)}_{j+2}(t), 
	\quad j=0,\dots,l_{\varpi_p(1)}-2, \text{ if $l_{\varpi_p(1)}\ge2$}, 
	\\
	(D_t-\Op(\theta_{\varpi_p(1)}(t)))W^{(p)}_{l^{(p,1)}}(t) & = 
	-\sum_{k=1}^{l_{\varpi_p(1)}}\Op(h^{(p)}_{\varpi_p(1)k}(t))W^{(p)}_{l^{(p,1)}-k+1}(t)+
	W^{(p)}_{l^{(p,1)}+1}(t),
	\\
	(D_t-\Op(\theta_{\varpi_p(2)}(t)))W^{(p)}_{l^{(p,1)}+j+1}(t) &= \phantom{-}W^{(p)}_{l^{(p,1)}+j+2}(t), 
	\quad j=0,\dots, l_{\varpi_p(2)}-2, \text{ if $l_{\varpi_p(2)}\ge2$, $n\ge2$}, 
	\\
	(D_t-\Op(\theta_{\varpi_p(2)}(t)))W^{(p)}_{l^{(p,2)}}(t) & = 
	-\sum_{k=1}^{l_{\varpi_p(2)}}\Op(h^{(p)}_{\varpi_p(2)k}(t))W^{(p)}_{l^{(p,2)}-k+1}(t)+
	W^{(p)}_{l^{(p,2)}+1}(t), \text{ if $n\ge2$},
	\\
	&\dots,
	\\
	\displaystyle (D_t-\Op(\tau_{\varpi_p(n)}(t)))W^{(p)}_{m}(t)&=
	-\sum_{k=1}^{l_{\varpi_p(n)}}\Op(h^{(p)}_{\varpi_p(n)k}(t))W^{(p)}_{m-k+1}(t)
	\\
	-\sum_{j=1}^{m-1}&\left(\sum_{q=1}^{m-j}\Op(r^{(p)}_j(t))\circ\Op(w^{(p)}_{m-j,q}(t))W^{(p)}_{m-j-q+1}(t)+
	\Op(r^{(p)}_j(t))W^{(p)}_{m-j+1}(t)\right)
	\\
	&\hspace{-10mm}-\Op(r^{(p)}_m(t))W^{(p)}_1(t)+g(t),
	\\
	&\dots
	\end{aligned}
	\right.
\end{equation}
and equivalent, block by block, to the equation $Lu(t)=g(t)$.

As it is very well-known in the usual hyperbolic theory, in the case of weak hyperbolicity the principal term does not provide enough information, by itself, to imply well-posedness of the Cauchy problem. In other words, lower order terms are also relevant in this case, and one needs to impose additional conditions on them. We will then assume that $L$ satisfies the $SG$-Levi 
condition
\begin{equation}
\label{eq:LC}
h^{(p)}_{jk} \in C^\infty([0,T], S^{0,0}(\R^d)), \quad
p,j=1, \dots, n, k=1, \dots, l_{j},
\end{equation}
see Corollary \ref{cor:resort}.

\begin{remark}
	Let us observe that, indeed, \eqref{eq:LC} needs to be fulfilled only for a single value of $p=1,\dots,n$. 
	Also, \eqref{eq:LC} is automatically fulfilled when $L$ is strictly $SG$-hyperbolic.
	If $L$ satisfies \eqref{eq:LC} we will also say that $L$ is of Levi type. 
\end{remark}

It is clear, in view of the 
calculus of $SG$ pseudodifferential operators, the fact that $r^{(p)}_j\in C^\infty([0,T],S^{-\infty,-\infty})$, $p=1,\dots,n$,
and the inclusions among the $SG$ 
symbols, that the system \eqref{eq:Wmbis} is a hyperbolic first order linear system of the form \eqref{sys}, where:

\noindent
- the ($nm\times nm$)-dimensional, block-diagonal matrix $\kappa_1\in C^\infty([0,T], S^{1,1})$ is given by 
	$\kappa_1=\mathrm{diag}(\kappa_{11},\dots,\kappa_{1n})$, with each block defined by
	$$\kappa_{1p}=\mathrm{diag}(\underbrace{\theta_{\omega_p(1)},\dots,\theta_{\omega_p(1)}}_{\text{$l_{\omega_{p(1)}}$ times}},\underbrace{\theta_{\omega_p(2)},\dots,\theta_{\omega_p(2)}}_{\text{$l_{\omega_{p(2)}}$ times}}, \ldots, \underbrace{\theta_{\omega_p(n)},\dots,\theta_{\omega_p(n)}}_{\text{$l_{\omega_{p(n)}}$ times}} ),\ p=1,\dots,n;$$
\\
- 	the ($nm\times nm$)-dimensional, block-diagonal matrix
	$\kappa_0\in C^\infty([0,T], S^{0,0})$ is given by $\kappa_0=\mathrm{diag}(\kappa_{01},\dots,\kappa_{0m})$
	%
	with suitable matrices $\kappa_{0p}$ having entries in $C^\infty([0,T], S^{0,0})$, $p=1,\dots,n$;
\\
- the right-hand side is
	\[
	Y(t)=(\underbrace{G(t),\dots,G(t)}_{\text{$n$ times}})^t, 
	\quad
	G(t)=(\underbrace{0,\dots,0}_{\text{$m-1$ times}},g(t))^t.
\]
The initial data $W_0$ is obtained by $W_0=\Op(b)U_0$, with $U_0=(u_0, \dots, u_{m-1} )^t$ and a ($mn\times m$)-dimensional 
block-matrix symbol $b$ with the following structure:
\begin{equation}\label{eq:bm}
	b=\begin{pmatrix}
		b^{(1)}\rule{0mm}{6mm}
		\\
		\underline{\hspace{6mm}}
		\\
		\dots
		\\
		\underline{\hspace{6mm}}\rule{0mm}{3mm}
		\\
		b^{(n)}\rule{0mm}{6mm}
		\\
		\hspace{6mm}
	\end{pmatrix},
	\quad
	b^{(p)}=\begin{pmatrix}
		1 & 0 & 0 & 0 & \dots
		\\
		b^{(p)}_{10} & 1 & 0 & 0 &\dots
		\\
		b^{(p)}_{20} & b^{(p)}_{21} & 1 & 0 & \dots
		\\
		\dots & \dots & \dots & \dots &\dots
	\end{pmatrix}, p=1,\dots,n,
\end{equation}
and the ($m\times m$)-dimensional matrices $b^{(p)}$ satisfying 
\begin{itemize}
	\item[-] if $m\ge2$, $b^{(p)}_{jk}\in S^{j-k,j-k}$, $j > k$, $j=1,\dots, m-1$, $k=0,\dots,j-1$,
	\item[-] $b^{(p)}_{jj}=1\in S^{0,0}$, $j=0,\dots,m-1$,
	\item[-] if $m\ge2$, $b^{(p)}_{jk}=0$, $j<k$, $j=0,\dots,m-2$, $k=j+1,\dots, m-1$,
\end{itemize}
$p=1,\dots,m$.

\begin{remark}
	Consider, for instance, the case $n=1$, that is, $\caL_m$ admits a unique real root $\theta_1=\tau_1$
	of maximum multiplicity $l=l_1=m$.
	Then, there is a single ``reordering'' $\varpi_1=(1)$,
	the vector $W$ has $m$ components, $W=(W^{(1)}_1, \dots, W^{(1)}_m)$, 
	and \eqref{eq:Wmbis} consists of a single block
	of $m$ equations. Namely, in view of Corollary \ref{cor:resort}, assuming $n\ge2$ and
	dropping everywhere the ${^{(1)}}$ label, \eqref{eq:Wpbis} reads, in this case,
	\begin{align*}
		W_1(t)&=u(t),
		\\
		W_2(t)&=(D_t-\Op(\tau_1(t)))u(t)=(D_t-\Op(\tau_1(t)))W_1(t),
		\\
		&\dots,
		\\
		W_{m}(t)&=(D_t-\Op(\tau_1(t)))^{m-1}u(t)=(D_t-\Op(\tau_1(t)))W_{m-1}(t),
	\end{align*}
	while $Lu(t)=g(t)$ is then equivalent to 
	\begin{align*}
		(D_t-\Op(\tau_1(t)))^mu(t)&+
		\sum_{k=1}^{m}\Op(h_{1k}(t))(D_t-\Op(\tau_1(t)))^{m-k}u(t)+\sum_{j=1}^m \Op(r_{j}(t)) D_{t}^{m-j}u(t)=g(t)
		\\
		&\Leftrightarrow
		\\
		(D_t-\Op(\tau_1(t)))W_m(t)&=-\sum_{k=1}^{m}\Op(h_{1k}(t))W_{m-k+1}(t)
		\\
		-\sum_{j=1}^{m-1}&\left(\sum_{q=1}^{m-j}\Op(r_j(t))\circ\Op(w_{m-j,q}(t))W_{m-j-q+1}(t)+
		\Op(r_j(t))W_{m-j+1}(t)\right)
		\\
		&\hspace{-10mm}-\Op(r_m(t))W_1(t)+g(t),
	\end{align*}
	that is,
	\[
		\left\{
		\begin{aligned}
			(D_t-\Op(\tau_1(t)))W_1(t)&=\phantom{-}W_2(t)
			\\
			\dots
			\\
			(D_t-\Op(\tau_1(t)))W_{m-1}(t)&=\phantom{-}W_m(t)
			\\
		(D_t-\Op(\tau_1(t)))W_m(t)&=-\sum_{k=1}^{m}\Op(h_{1k}(t))W_{m-k+1}(t)
		\\
		-\sum_{j=1}^{m-1}&\left(\sum_{q=1}^{m-j}\Op(r_j(t))\circ\Op(w_{m-j,q}(t))W_{m-j-q+1}(t)+
		\Op(r_j(t))W_{m-j+1}(t)\right)
		\\
		&\hspace{-10mm}-\Op(r_m(t))W_1(t)+g(t),			
		\end{aligned}
		\right.
	\]
	which has the form \eqref{sys} with $Y(t)=(\underbrace{0,\dots,0}_{m-1\text{ times}},g(t))^t$, as claimed,
	since $\kappa_1(t)=\mathrm{diag}(\tau_1(t),\dots,\tau_1(t))$, while
	the coefficients of the components of $W$ in the right-hand sides of the equations
	are all symbols of order $(0,0)$, since $S^{-\infty,-\infty}\subset S^{0,0}$.
\end{remark}
In this situation, 
by an extension of the results in \cite{Coriasco:998.2, Coriasco:998.3}, we can give an explicit form 
to the fundamental solution $E(t,s)$ in Theorem \ref{thm:fundsol}, in terms of (smooth families of) $SG$ FIOs of type I, modulo smoothing
remainders.
With the results of Theorem \ref{thm:perfd} at hand, we 
solve, by means of the so-called
\textit{geometrical optics} (or FIOs) method, the system
\begin{equation}\label{eq:sysz}
\begin{cases}
	(D_t - \Op(\kappa_1(t)) - \Op(\widetilde{\kappa}_0(t)))\widetilde{E}(t,s) = 0, & t\in[0,T_0],
	\\
	\widetilde{E}(s,s)=I, & s\in[0,T_0).
\end{cases}
\end{equation}
Notice that the \textit{approximate solution operator} $\widetilde{A}(t,s)$, $(t,s)\in\Delta_{T_0}$, 
in terms of $SG$ FIOs solves the corresponding
operator problem up to smoothing remainders. Namely, the FIOs family $\widetilde{A}(t,s)$ solves the system
\begin{equation}\label{eq:syszopr}
\begin{cases}
	(D_t - \Op(\kappa_1(t)) - \Op(\widetilde{\kappa}_0(t)))\widetilde{A}(t,s) = \widetilde{R}_1(t,s), & (t,s)\in \Delta_{T_0}, 
	\\
	\widetilde{A}(s,s)=I+\widetilde{R}_2(s), & s\in[0,T_0),
\end{cases}
\end{equation}
where $\widetilde{R}_1$ and $\widetilde{R}_2$ are suitable smooth families of operators in
 $\caO(-\infty,-\infty)$, coming from the solution method, see
\cite{cordes, coriasco99, Coriasco:998.2, Coriasco:998.3, kumano-go} for more details. It turns out that 
$\widetilde{A}(t,s)$ belongs to $\caO(0,0)$ for any 
$(t,s)\in\Delta_{T_0}$. Explicitely, 
\begin{align*}
\widetilde{A}(t,s)&=\mathrm{diag}(\widetilde{A}^{(1)}(t,s),\dots,\widetilde{A}^{(m)}(t,s)),
\\
\widetilde{A}^{(p)}(t,s)&=\mathrm{diag}(\Op_{\varphi_{\varpi_p(1)}(t,s)}(a^{(p)}_1(t,s)), \dots,\Op_{\varphi_{\varpi_p(m)}(t,s)}(a^{(p)}_m(t,s))), 
p=1,\dots,m,
\end{align*}
with phase functions
$\varphi_j\in C^\infty(\Delta_{T_0}, \Phr(\lambda))$, $\lambda=\lambda(T_0)$ suitably small, solutions of the eikonal equations \eqref{eik}
with $\tau_j$ in place of $\varkappa$, and symbols $a^{(p)}_j\in C^\infty(\Delta_{T_0}, S^{0,0})$, $p,j=1,\dots, m$, see \cite{Coriasco:998.2}. In fact,
in this case, the system can be diagonalized block by block.

Solving the equations in \eqref{eq:sysz} modulo smoothing terms
is enough for our aims, as we will see below. Indeed, we have the following result (see \cite{ACSlinear} for its proof).
\begin{proposition}\label{prop:diffsm}
	Under the hypotheses \eqref{elle2}, \eqref{roots2}, let
	$A(t,s)=\Op(\omega(t))\circ\widetilde{A}(t,s)\circ\Op(\omega_{-1})(s)$, with
	$\widetilde{A}(t,s)$ solution
	of \eqref{eq:syszopr}, $(t,s)\in\Delta_{T_0}$, and 
	$\Op(\omega_{-1})(s)$ parametrix of the perfect diagonalizer $\Op(\omega(s))$, $s\in[0,T]$.
	Then, the solution ${E}(t,s)$ of \eqref{tocheck} and the operator family $A(t,s)$
	satisfy ${E}-{A}\in C^\infty(\Delta_{T_0},\Op(S^{-\infty,-\infty}(\R^d)))$.
\end{proposition}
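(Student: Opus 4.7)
The plan is to verify that the family $A(t,s)$ is an \emph{approximate} fundamental solution of \eqref{tocheck}, in the sense that it satisfies the same Cauchy problem modulo $C^\infty(\Delta_{T_0},\Op(S^{-\infty,-\infty}))$-terms, and then to deduce $E - A \in C^\infty(\Delta_{T_0}, \Op(S^{-\infty,-\infty}))$ by applying Duhamel's formula to the difference. The key algebraic device is the intertwining identity from Theorem \ref{thm:perfd}, which conjugates the original system to the perfectly diagonalized one (up to smoothing), combined with the fact that $\widetilde{A}$ solves the latter modulo smoothing, by \eqref{eq:syszopr}.

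The first step is to compute $(D_t - \Op(\kappa_1(t)) - \Op(\kappa_0(t)))A(t,s)$. Using the product rule in $t$ (noting that $\Op(\omega_{-1})(s)$ is $t$-independent) and rewriting $(\Op(\kappa_1)+\Op(\kappa_0))\Op(\omega)$ by means of the intertwining identity in Theorem \ref{thm:perfd}, which gives
\[
(\Op(\kappa_1(t))+\Op(\kappa_0(t)))\Op(\omega(t))
= \Op(D_t\omega(t)) + \Op(\omega(t))(\Op(\kappa_1(t))+\Op(\widetilde{\kappa}_0(t))) - R_\omega(t),
\]
with $R_\omega \in C^\infty([0,T], \Op(S^{-\infty,-\infty}))$, the two $\Op(D_t\omega(t))\widetilde{A}(t,s)\Op(\omega_{-1})(s)$-contributions cancel. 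Using \eqref{eq:syszopr} to replace $(D_t - \Op(\kappa_1) - \Op(\widetilde{\kappa}_0))\widetilde{A}(t,s)$ by $\widetilde{R}_1(t,s)$, I then arrive at
\[
(D_t - \Op(\kappa_1(t)) - \Op(\kappa_0(t)))A(t,s) = \Op(\omega(t))\widetilde{R}_1(t,s)\Op(\omega_{-1})(s) + R_\omega(t)\widetilde{A}(t,s)\Op(\omega_{-1})(s) =: F(t,s),
\]
and the ideal property of $\caO(-\infty,-\infty)$ within $\caO(\infty,\infty)$ (Remark \ref{rem:O} together with Theorem \ref{thm:smoothing}) gives $F \in C^\infty(\Delta_{T_0}, \Op(S^{-\infty,-\infty}))$. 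A parallel computation at $t = s$ yields
\[
A(s,s) = \Op(\omega(s))(I + \widetilde{R}_2(s))\Op(\omega_{-1})(s) = I + G(s),
\]
with $G \in C^\infty([0,T_0], \Op(S^{-\infty,-\infty}))$, since $\Op(\omega_{-1})(s)$ is a parametrix of $\Op(\omega(s))$ and $\widetilde{R}_2(s)$ is smoothing.

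Setting $B(t,s) := E(t,s) - A(t,s)$, the two preceding displays show that $B$ solves the operator-valued Cauchy problem $(D_t - \Op(\kappa_1) - \Op(\kappa_0))B(t,s) = -F(t,s)$, $B(s,s) = -G(s)$. Applying Duhamel's formula from Theorem \ref{thm:fundsolabs} pointwise on arbitrary $v \in \caS'(\R^d)$, I find
\[
B(t,s) = -E(t,s)G(s) - i\int_s^t E(t,\vartheta)F(\vartheta,s)\,d\vartheta,
\]
and, since $F, G$ are smoothing while $E(t,\vartheta) \in \caO(0,0)$ preserves both $\caS(\R^d)$ and $\caS'(\R^d)$, each factor in the right-hand side is smoothing. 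Joint $C^\infty$-dependence on $(t,s) \in \Delta_{T_0}$ is inherited from the smooth dependence of the ingredients via Corollary \ref{cortocite}, giving the claim.

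The main technical obstacle lies in the first step: the algebraic cancellations must be performed carefully, so that all smoothing remainders (those arising from Theorem \ref{thm:perfd}, from \eqref{eq:syszopr}, and from $\Op(\omega(s))\Op(\omega_{-1})(s) - I$) end up inside the smoothing ideal. In particular, since $\widetilde{A}(t,s)$ is only known to belong to $\caO(0,0)$, rather than to any pseudodifferential class (by Theorem \ref{thm:fundsol}, it is merely a limit of matrices of $SG$ FIOs of type I), the analysis cannot invoke any symbolic calculus directly, and one must rely solely on the abstract ideal structure of $\caO(-\infty,-\infty)$ within $\caO(\infty,\infty)$ and on the preservation of smooth $(t,s)$-dependence throughout.
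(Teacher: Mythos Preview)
The paper does not give its own proof of this proposition; it only refers the reader to \cite{ACSlinear}. Your argument is the natural one and is essentially correct: you use the intertwining relation \eqref{eq:perfdiag} to show that $A(t,s)$ solves \eqref{tocheck} modulo smoothing terms (both in the evolution equation and in the initial condition), and then represent the difference $E-A$ by Duhamel's formula with smoothing data, from which the claim follows via the ideal property of $\caO(-\infty,-\infty)$ and Theorem \ref{thm:smoothing}.

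One minor point deserves a word of justification. When you conclude $F\in C^\infty(\Delta_{T_0},\Op(S^{-\infty,-\infty}))$, the pointwise membership in $\caO(-\infty,-\infty)=\Op(S^{-\infty,-\infty})$ is indeed immediate from the ideal property, but the \emph{smooth} $(t,s)$-dependence in the topology of $\caS(\R^{2d})$-kernels is not automatic from the abstract statement of Remark \ref{rem:O}. It does follow, however, from the explicit structure of the ingredients: $R_\omega$, $\widetilde{R}_1$, $\widetilde{R}_2$, $\omega$, $\omega_{-1}$ are smooth families of $SG$ symbols, and $\widetilde{A}(t,s)$ is given explicitly as a block-diagonal matrix of type~I $SG$ FIOs with phase functions and amplitudes in $C^\infty(\Delta_{T_0},\cdot)$ (as described just before the proposition). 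Composing these using Theorem \ref{thm:compi} and the $SG$ FIO calculus preserves smooth parameter dependence. The same remark applies to the Duhamel representation of $B(t,s)$, where smooth $(t,s)$-dependence comes from Corollary \ref{cortocite}(2) combined with differentiation under the integral sign. With these observations made explicit, your proof is complete.
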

\begin{remark}\label{rem:ker}
	Proposition \ref{prop:diffsm} means that the Schwartz kernels of ${E}$ and ${A}$ differ by a family
	of elements of $\caS(\R^{2d})$, smoothly depending on $(t,s)\in\Delta_{T_0}$.
\end{remark}
Using Proposition \ref{prop:diffsm}, by repeated applications of Theorem \ref{thm:compi}, we finally
obtain 
\begin{equation}\label{eq:e}
	E(t,s)=E_0(t,s)+R(t,s),\quad (t,s)\in\Delta_{T_0},
\end{equation}
where 
\begin{itemize}
	\item[-] $E_0$ is a ($nm\times nm$)-dimensional matrix of operators in $\caO(0,0)$ given by
	\[
		E_0(t,s)=\left(\sum_{p=1}^n\Op_{\varphi_p(t,s)}(e_{pjk}(t,s))\right)_{j,k=0,\dots,nm-1},
	\]
	with the regular phase-functions $\varphi_p(t,s)$, solutions of the eikonal equations associated with $\tau_p$,
	and symbols $e_{pjk}(t,s)\in S^{0,0}$, $j,k=0,\dots,nm-1$, $p=1,\dots,n$, smoothly depending on $(t,s)\in\Delta_{T_0}$;
	\item[-] $R$ is a ($nm\times nm$)-dimensional matrix of elements in $C^\infty(\Delta_{T_0},\Op(S^{-\infty, -\infty}))$,
	operators with kernel in $S(\R^{2d})$, smoothly depending on $(t,s)\in\Delta_{T_0}$, that is,
	\[
		R=(\Op(r_{jk}(t,s)))_{j,k=0,\dots,nm-1},
	\]
	with symbols $r_{jk}\in C^\infty(\Delta_{T_0},S^{-\infty,-\infty})$, $j,k=0,\dots,nm-1$,
	 collecting the remainders of the compositions in 
	$\Op(\omega)\circ \widetilde{A}\circ\Op(\omega_{-1})$ and the difference $E-A$.
\end{itemize}  
The next Lemma \ref{lem:solregbis} 
from \cite{Coriasco:998.3}, see also \cite{Cicognani-Zanghirati:997.1, Cicognani-Zanghirati:998.1} and
\cite{Mizohata:2}, is the key result to achieve, from \eqref{eq:e} and the expressions of $E_0$ and $R$,
the correct regularity of $u$.
\begin{lemma}\label{lem:solregbis}
There exists a ($m\times mn$)-dimensional matrix $\varUpsilon_n\in C^\infty([0,T_0],S^{0,0}(\R^d)$ such that
the $k$-th row consists of symbols of order $(l-m+k,l-m+k)$, $k=0,\dots,m-1$, and
\[
	\begin{pmatrix}
	u(t)
	\\
	\dots
	\\
	D_{t}^{m-1}u(t)
	\end{pmatrix} = \Op(\varUpsilon_n(t))W(t), \quad t\in[0,T_0].
\]
\end{lemma}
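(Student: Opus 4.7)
The plan is to invert the relation $u \leftrightarrow W$ by applying the algebraic identity of Lemma \ref{lem:Dt} at the symbol level, then translating it into an operator identity on $u$, and finally matching the resulting operator products with specific components of $W$. Fix $k\in\{0,1,\dots,m-1\}$. Lemma \ref{lem:Dt} gives
\[
\theta^k = \sum_{p=1}^n \left[\sum_{q=0}^{l_p-1} \varsigma_{kpq}(t)\,(\theta-\theta_p(t))^q\right]\prod_{\substack{1\le j\le n\\ j\ne p}}(\theta-\theta_j(t))^{l_j},
\]
with $\varsigma_{kpq}\in C^\infty([0,T],S^{k-q+l_p-n,k-q+l_p-n}(\R^d))$. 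Setting $\theta=D_t$ and applying both sides to $u$ produces an identity for $D_t^k u$ whose elementary building blocks are the operator compositions $(D_t-\Op(\theta_p))^q\prod_{j\ne p}(D_t-\Op(\theta_j))^{l_j}u$; passing from pointwise products to operator compositions generates commutator remainders of strictly reduced order, controlled by the $SG$ calculus through \eqref{eq:comp}.

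The next step is to match each operator product with a specific component of $W$. For each $p\in\{1,\dots,n\}$, let $\rho(p)$ be the unique index for which $\varpi_{\rho(p)}(n)=p$; by the cyclic structure \eqref{eq:resort} this is $\rho(p)=p+1\bmod n$, with the convention $n+1\mapsto1$. Then, by Corollary \ref{cor:resort} applied with the reordering $\varpi_{\rho(p)}$, the operator
\[
L^{(\rho(p),n-1)} = L^{(\rho(p))}_{\varpi_{\rho(p)}(n-1)}\cdots L^{(\rho(p))}_{\varpi_{\rho(p)}(1)}
\]
collects the $n-1$ factors corresponding to all distinct roots different from $\theta_p$. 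The Levi condition \eqref{eq:LC}, together with \eqref{eq:Ljbis}, ensures that $L^{(\rho(p),n-1)}$ differs from $\prod_{j\ne p}(D_t-\Op(\theta_j))^{l_j}$ only by terms of strictly lower order. By \eqref{eq:Wpbis} with the index choice $k=n-1$,
\[
(D_t-\Op(\theta_p(t)))^q L^{(\rho(p),n-1)}u(t) = W^{(\rho(p))}_{l^{(\rho(p),n-1)}+q+1}(t),
\]
so the operator products arising in the first step correspond to specific entries of the vector $W$.

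Combining the two steps produces an identity of the shape
\[
D_t^k u(t) = \sum_{p=1}^n\sum_{q=0}^{l_p-1}\Op(\varsigma_{kpq}(t))\,W^{(\rho(p))}_{l^{(\rho(p),n-1)}+q+1}(t) + \sum_{j=0}^{k-1}\Op(\eta_{k,j}(t))\,D_t^j u(t),
\]
where the second sum collects the correction terms: the Levi-type contributions $h^{(p)}_{jk}$ from \eqref{eq:Ljbis}, the commutator remainders generated by passing from products to compositions, and the smoothing remainders $r^{(p)}_j\in C^\infty([0,T],S^{-\infty,-\infty})$ from \eqref{eq:Lfactorbis}. A descending induction on $k$, starting from $k=m-1$ and using Lemma \ref{lem:Dtkbis} together with the relation $u=W^{(p)}_1$ valid for every $p$ (immediate from \eqref{eq:Wpbis} with $k=j=0$), replaces the lower-order $D_t^j u$ terms on the right by expressions in $W$, and assembles the matrix $\varUpsilon_n$ row by row.

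The main obstacle is the sharp order bound $(l-m+k,l-m+k)$ in the $k$-th row. The direct symbols $\varsigma_{kpq}$ only yield order $(k-q+l_p-n,k-q+l_p-n)$, which in the weakly hyperbolic case exceeds the target by as much as $m-n=\sum_j(l_j-1)\ge 0$. The compensation must come from the specific way the lower-order coefficients $h^{(\rho(p))}_{jk}\in C^\infty([0,T],S^{0,0}(\R^d))$ of the Levi-type factorization \eqref{eq:Ljbis} arrange themselves inside the factorization \eqref{eq:Lfactorbis}, so that their absorption into $\varUpsilon_n$ lowers the symbol orders exactly where needed. Carrying out this bookkeeping cleanly, block by block through the cyclic reorderings $\varpi_{\rho(p)}$, is the delicate technical point of the proof.
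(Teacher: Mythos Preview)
The paper does not prove this lemma; it is quoted from \cite{Coriasco:998.3}, so there is no in-paper argument to compare with. Your overall architecture --- use the partial-fraction identity of Lemma \ref{lem:Dt}, identify $(D_t-\Op(\theta_p))^q\prod_{j\ne p}(D_t-\Op(\theta_j))^{l_j}u$ with the component $W^{(\rho(p))}_{m-l_p+q+1}$ via the cyclic reorderings, and close by induction on $k$ --- is exactly the standard route in the cited references.

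There is, however, a genuine gap in your order bookkeeping. You take the bound $\varsigma_{kpq}\in S^{k-q+l_p-n,\,k-q+l_p-n}$ from Lemma \ref{lem:Dt} at face value and then look to the Levi coefficients $h^{(p)}_{jk}\in S^{0,0}$ to ``compensate'' the excess $m-n$. That mechanism cannot work: the $h^{(p)}_{jk}$ enter only as lower-order perturbations of the factors $L^{(p)}_j$, so they produce \emph{additional} terms of strictly lower $D_t$-degree; they cannot reduce the symbolic order of the principal coefficients $\varsigma_{kpq}$, which are determined by the characteristic roots alone. The actual point is that the order stated in Lemma \ref{lem:Dt} is not sharp. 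The coefficient $\varsigma_{kpq}$ is the $q$-th Taylor coefficient at $\theta=\theta_p$ of $\theta^k\big/\prod_{j\ne p}(\theta-\theta_j)^{l_j}$; by the separation hypothesis \eqref{def:constmult} each factor $(\theta_p-\theta_j)^{-1}\in S^{-1,-1}$, so the denominator contributes order $-(m-l_p)$ rather than $-(n-1)$. This gives $\varsigma_{kpq}\in S^{k-q+l_p-m,\,k-q+l_p-m}$, whose maximum over $p,q$ (attained at $q=0$, $l_p=l$) is exactly $l-m+k$, the target order for row $k$. (Check the extreme case $n=1$: there $\varsigma_{k1q}=\binom{k}{q}\theta_1^{k-q}\in S^{k-q,k-q}$, while the bound with $n$ would give $S^{k-q+m-1,k-q+m-1}$.) Once this sharper order is in hand, the Levi condition \eqref{eq:LC} is still needed, but only in the place you already use it: to guarantee that the discrepancy between $\prod_{j\ne p}(D_t-\Op(\theta_j))^{l_j}$ and $L^{(\rho(p),n-1)}$ consists of terms with fewer $D_t$-derivatives \emph{and} no gain in spatial order, so that the induction on $k$ closes.
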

Now assume for a moment that $g\in C([0,T], H^{z,\zeta})$, $(z,\zeta)\in\R^2$. Then, the Cauchy problem for the first order system \eqref{sys} with $s=0$, 
equivalent to \eqref{cplin}, fulfills all the assumptions of Theorem \ref{thm:fundsolabs}.
An application of Theorem \ref{thm:fundsolabs}, together with \eqref{eq:e} and Lemma \ref{lem:solregbis} initially gives
 \[
	\begin{pmatrix}
	u(t)
	\\
	\dots
	\\
	D_{t}^{m-1}u(t)
	\end{pmatrix}
	=[\Op(\varUpsilon_n(t))\circ(E_0(t,0)+R(t,0))\circ\Op(b)]U_0+i\ds\int_0^t[\Op(\varUpsilon_n(t))\circ(E_0(t,s)+R(t,s))]Y(s)ds, t\in[0,T_0].
\]
Then, taking into account that the only non-vanishing entries of $Y$ coincide with $g$, computations with matrices, 
the structure of the entries of $\varUpsilon_n$ and $b$, and further applications of Theorem \ref{thm:compi} give
\beqs\label{eq:uexplicitbis}
\begin{aligned}
u(t)&=\sum_{j=0}^{m-1}\left[\sum_{p=1}^n\Op_{\varphi_p(t,0)}(c^0_{pj}(t))+\Op({r}^0_j(t))\right]u_j
+i \int_0^t\left[\sum_{p=1}^n\Op_{\varphi_p(t,s)}(c^1_{p}(t,s))+\Op({r}^1(t,s))\right]g(s)ds,
\\
&=v_0(t)+\int_0^t\int_{\R^d}\Lambda(t,s,.,y)g(s,y)\,dyds,
\end{aligned}
\eeqs
where
\begin{itemize}
	\item[-] the phase functions $\varphi_p$ are solution to the eikonal equations \eqref{eik}, with $\theta_p$ in place of $\varkappa$,
	$p=1,\dots,n$;
	\item[-] $c^0_{pj}\in 
	C^\infty([0,T_0],S^{l-1-j,l-1-j})$, $p=1,\dots,n$, ${r}^0_j\in C^\infty([0,T_0],S^{-\infty,-\infty})$,
	$j=0,\dots,m-1$, so that $\displaystyle v_0\in \bigcap_{j\ge0}C^j([0,T_0],$ $H^{z+m-l-j,\zeta+m-l-j})$;
	\item[-] $\Lambda\in C^\infty(\Delta_{T_0},\caS^\prime)$ is, for any $(t,s)\in\Delta_{T_0}$,
	the Schwartz kernel of the operator
	\begin{equation}\label{eq:truekersolbis}
		{E}_{l-m}(t,s)=i\left[\sum_{p=1}^n\Op_{\varphi_p(t,s)}(c^1_p(t,s))+\Op(r^1(t,s))\right],
	\end{equation}
	with $c^1_{p}\in C^\infty(\Delta_{T_0},S^{l-m,l-m})$, $p=1,\dots,m$, ${r}^1\in C^\infty(\Delta_{T_0},S^{-\infty,-\infty})$,
	so that also 
	\[
		\int_0^t\int_{\R^d}\Lambda(t,s,.,y)g(s,y)\,dyds\in 
		\bigcap_{j\ge0}C^j([0,T_0], H^{z+m-l-j,\zeta+m-l-j}).
	\]
\end{itemize}
Notice the usual abuse of notation, using the kernel $\Lambda(t,s)$ in the \textit{distributional integral} 
in \eqref{eq:uexplicitbis}.
By Proposition \ref{thm:smoothing}, $\Lambda(t,s)$ differs by an element of $C^\infty(\Delta_{T_0},\caS)$
from the kernel of 
\begin{equation}\label{eq:kersolappbis}
	\widetilde{E}_{l-m}(t,s)=i\sum_{p=1}^n\Op_{\varphi_p(t,s)}(c^1_{p}(t,s)).
\end{equation}

\subsection{Admissible spectral measures for Hilbert space valued stochastic integrals.}\label{subs:pez}
In this subsection we want to make sense of the stochastic integral appearing in \eqref{eq:mildsolutionSPDE} as a stochastic integral with respect to a cylindrical Wiener process on a Hilbert space, as described in Subsection \ref{subH}. We know from Section \ref{subs:reduction} that, in the stochastic integral appearing in \eqref{eq:mildsolutionSPDE}, $\Lambda$ is the kernel of (a linear combination of) FIOs $E_{l-m}$, with amplitudes of order $(l-m,l-m)$, where $l$ stands for the maximum multiplicity of the characteristic roots ($l=1$ in the case of a strictly hyperbolic operator, $1<l\le m$ in the constant multiplicities case).
To give meaning to
\beqs\label{lintegrale}
\int_0^t\int_\Rd \Lambda(t,s,x,y)\sigma(s,y, u(s,y))\dot\Xi(s,y)dyds=\int_0^tE_{l-m}(t,s)\sigma(s,u(s))d\Xi(s),
\eeqs
we first introduce the so-called Cameron-Martin space associated with $\Xi$.
Given the Gaussian process $\Xi$ described in Section \ref{subnoise}, let us define 
\begin{equation}\label{cammart}
\mathcal H_\Xi=\{\widehat{\varphi\mu}\colon\varphi\in L^2_{\mu,s}(\R^d)\},
\end{equation}
where $\mu$ is the spectral measure associated with the noise $\Xi$, and $L^2_{\mu,s}$ is the space of symmetric functions in $L^2_\mu$, i.e. 
$\check{\varphi}(x)=\varphi(-x)=\varphi(x)$, $x\in\R^d$, and $\int_{\R^d}|\varphi(x)|^2\,\mu(dx)<\infty$. 
Clearly, $\mathcal H_\Xi\subset\mathcal S'(\R^d)$. The space $\mathcal H_\Xi$, endowed with the inner product
\[\langle\widehat{\varphi\mu},\widehat{\psi\mu}\rangle_{\mathcal H_\Xi}:=\langle\varphi,\psi\rangle_{L^2_\mu}, \quad \forall\varphi,\psi\in L^2_{\mu,s}(\R^d)\]
with corresponding norm \[||\widehat{\varphi\mu}||_{\mathcal H_\Xi}^2=||\varphi||_{L^2_\mu}^2\]
turns out to be a real separable Hilbert space, and it is the so-called
"Cameron-Martin space" of $\Xi$, see \cite[Propostition 2.1]{peszat}. Thus, $\Xi$ is a  cylindrical  Wiener process on $(\mathcal H_\Xi,\langle\cdot,\cdot\rangle_{\mathcal H_\Xi})$ which takes values in any Hilbert space $\mathcal U$ such that the embedding $\mathcal H_\Xi\hookrightarrow \mathcal U$ is an Hilbert-Schmidt map.

The following Lemma \ref{lem:weightedpesz} shows that the multiplication operator $\mathcal H_\Xi\ni\psi\mapsto E_{l-m}(t,s)\sigma(s,u)\cdot \psi$ is  Hilbert-Schmidt from $\mathcal H_\Xi$ to $H^{z+m-l,\zeta}$, under suitable assumptions on $\sigma$. Therefore, \eqref{lintegrale} is well-defined as stochastic integral with respect to a cylindrical  Wiener process on $(\mathcal H_\Xi,\langle\cdot,\cdot\rangle_{\mathcal H_\Xi})$ which takes values in $H^{z+m-l,\zeta}$.

\begin{definition}\label{def:lip}
The class $\mathrm{Lip}(z,\zeta,r,\rho)$, for given $z,\zeta,r,\rho\in\R$, $r,\rho\ge0$,
consists of all measurable functions $g:[0,T]\times\R^d\times\R\longrightarrow\C$ such that
there exists a real-valued, non negative,
$C_t=C(t)\in C[0,T]$, fulfilling the following:
\begin{itemize}
\item for every $w\in H^{z+r,\zeta+\rho}(\R^d)$, $t\in[0,T]$, we have
$\|g(t,\cdot,w)\|_{z,\zeta}\leq C(t)(1+\|w\|_{z+r,\zeta+\rho})$;
\item for every $w,v\in H^{z+r,\zeta+\rho}(R^d)$, $t\in[0,T]$, we have  
$\|g(t,\cdot,w)-g(t,\cdot,v)\|_{z,\zeta}\leq C(t)\|w-v\|_{z+r,\zeta+\rho}$.
\end{itemize}
\end{definition}

\begin{remark}\label{rem:lipbase}
		In Definition \ref{def:lip} we can actually relax the hypotheses, 
		and ask that the stated properties hold for $w,v\in U$,
		with $U$ a suitable open subset of $H^{w,\omega}(\R^d)$, for some 
		$w\ge z+r$, $\omega\ge \zeta+\rho$ (typically, a sufficiently small neighbourhood of
		the initial data of the Cauchy problem). In this case, we indicate
		the corresponding set by $\mathrm{Lip_{loc}}(z,\zeta,r,\rho)$.
\end{remark}

\begin{remark}\label{rem:lip}
Let $g:[0,T]\times\R^d\times\R\longrightarrow\R$ be measurable and
$\zeta=\rho=0$. Assume that
there exists a real-valued, non negative,
$C_t=C(t)\in C[0,T]$, satisfying
\begin{itemize}
\item for every $w\in\R$, $x\in\R^d$, $t\in[0,T]$, we have
$|g(t,x,w)|\leq C(t)(|\kappa(x)|+|w|)$, for some $\kappa \in H^{z,\zeta}(\R^d)$, and
\item for every $w,v\in\R$, $x\in\R^d$, $t\in[0,T]$, we have  $|g(t,x,w)-g(t,x,v)|\leq C(t)|w-v|$.
\end{itemize}
Then, $g\in\mathrm{Lip}(z,0,r,0)$. In fact, for some $C>0$,
	\begin{align*}
		\| g(t,\cdot,w) \|_{z,0}^2&=
		\| \jap^z g(t,\cdot,w) \|_{L^2}^2\leq C_t^2\|
		\jap^z (|\kappa|+|w|) \|_{L^2}^2\le 2C_t^2(\|\kappa\|^2_{z,0}+\|w\|^2_{z,0})
		\le C^2 C_t^2(1+\|w\|_{z+r,0})^2,
	\end{align*}
	and similarly for the Lipschitz continuity with respect to the third variable, cfr. \cite{peszat}.
\end{remark}

\begin{remark}\label{rem:intpowers}
	Let $g(t,x,w)=w^n$, $n\in\N$. Then 
	$g\in\mathrm{Lip_{loc}}(z,\zeta,r,\rho)$,
	when $z,r,\rho\ge0$,
	$\zeta>\frac{d}{2}$. In fact, 
	when $w\in H^{z+r,\zeta+\rho}(\R^d)$ is such that $\|w\|_{z+r,\zeta+\rho}\le R$,
	$$
	\| w^n \|_{z,\zeta}\le C \| w^n \|_{nz,\zeta}\le C \|w \|_{z,\zeta}^n\le 
	\widetilde{C} R^{n-1}\|w\|_{z+r,\zeta+\rho},
	$$
	for the algebra properties of the Sobolev-Kato spaces, see e.g.\ \cite[Proposition 2.2]{AC06}.
\end{remark}

\begin{lemma}\label{lem:weightedpesz}
Let $E_{l-m}(t,s)$ be a family of FIOs with amplitudes of order $(l-m,l-m)$, $0\leq l\leq m$, parametrized by
$0\le s\le t \le T$, and 
$\sigma\in \mathrm Lip(z,\zeta,m-l,0)$. If the spectral measure satisfies
\beqs
\sup_{\eta\in\R^d}\ds\int_{\R^d}\frac{1}{(1+|\xi+\eta|^2)^{m-l}}\mu(d\xi)<\infty,
\eeqs
(cfr \eqref{eq:meascm}),
then, for every $w\in H^{z+m-l,\zeta}(\R^d)$, the operator $\Phi(t,s)=\Phi_{l,m,\sigma,w}(t,s)\colon
\psi\mapsto E_{l-m}(t,s)\sigma(s,w) \psi$ belongs to $L_0^2(\mathcal H_\Xi, H^{z+m-l,\zeta}(\R^d))$. 
Moreover, the Hilbert-Schmidt norm of $\Phi(t,s)$ can be estimated by 
\[
\|\Phi(t,s)\|_{L_0^2(\mathcal H_\Xi, H^{z+m-l,\zeta})}^2\leq 
C^2_{t,s}(1+\|w\|_{z+m-l,\zeta})^2
\sup_{\eta\in\Rd}\int_\Rd \frac{1}{(1+|\xi+\eta|^2)^{m-l}}\mu(d\xi),
\]
for some $C_{t,s}>0$.
\end{lemma}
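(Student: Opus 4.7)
The plan is to select an orthonormal basis of $\mathcal H_\Xi$ induced by one of $L^2_{\mu,s}(\R^d)$, convert the Hilbert--Schmidt sum into an integral against $\mu$ by Bessel's inequality, and then exploit the Fourier shift identity to extract from the $SG$-FIO symbol the decay factor $\jap{\eta+\xi}^{-2(m-l)}$ that matches the spectral hypothesis. Specifically, an ONB $\{\varphi_k\}$ of $L^2_{\mu,s}$ yields, via \cite[Proposition 2.1]{peszat}, an ONB $\{e_k=\widehat{\varphi_k\mu}\}$ of $\mathcal H_\Xi$. For any $\psi=\widehat{\varphi\mu}\in\mathcal H_\Xi$, the linearity of $E_{l-m}(t,s)$ and of the multiplication by $\sigma(s,w)$ give
\[
\Phi(t,s)\psi(x)=\int_{\R^d} G(t,s,x,\xi)\varphi(\xi)\mu(d\xi),\qquad G(t,s,x,\xi):=E_{l-m}(t,s)\bigl[\sigma(s,w)(\cdot)e^{\ii(\cdot)\cdot\xi}\bigr](x).
\]
Applying Bessel's inequality in $L^2_\mu$ pointwise in $x$ to $\pdd^\zeta_x\Phi(t,s)e_k(x)$, multiplying by $\jap{x}^{2(z+m-l)}$, integrating in $x$ and invoking Fubini yields
\[
\|\Phi(t,s)\|_{L_0^2(\mathcal H_\Xi,H^{z+m-l,\zeta})}^2\le\int_{\R^d}\|G(t,s,\cdot,\xi)\|_{H^{z+m-l,\zeta}}^2\mu(d\xi).
\]

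The core task is to bound the integrand on the right. By \eqref{eq:e}, $E_{l-m}(t,s)=\Op_\phi(a)$ with $a\in S^{l-m,l-m}$, up to a smoothing remainder (negligible, since it maps $\mathcal S'$ to $\mathcal S$ and contributes a bounded term to the Hilbert--Schmidt norm). The Fourier shift identity $\widehat{fe^{\ii\cdot\xi}}(\eta)=\hat f(\eta-\xi)$ recasts
\[
G(t,s,\cdot,\xi)=\Op_{\phi_\xi}(a_\xi)\sigma(s,w),\qquad\phi_\xi(x,\eta):=\phi(x,\eta+\xi),\quad a_\xi(x,\eta):=a(x,\eta+\xi),
\]
with the key $\xi$-decay $|a_\xi(x,\eta)|\le C\jap{x}^{l-m}\jap{\eta+\xi}^{l-m}$. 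Composing $\jap{x}^{z+m-l}\pdd^\zeta$ with $\Op_\phi(a)$ via Theorem \ref{thm:compi}, invoking the $L^2$-continuity of SG-FIOs (Theorem \ref{thm:SGAF}), and performing the analogous shift for $\pdd^\zeta$ (which converts multiplication by $e^{\ii\cdot\xi}$ into a shift $\eta\mapsto\eta+\xi$ in the associated Fourier multiplier) should produce the pointwise bound
\[
\|G(t,s,\cdot,\xi)\|_{H^{z+m-l,\zeta}}^2\le C_{t,s}^2\int_{\R^d}\frac{|h_{\sigma,w}(\eta)|^2}{(1+|\eta+\xi|^2)^{m-l}}d\eta,\qquad\|h_{\sigma,w}\|_{L^2}\le C\|\sigma(s,w)\|_{z,\zeta}.
\]

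Finally, swapping integrations by Fubini and invoking the spectral hypothesis give
\[
\|\Phi(t,s)\|_{L_0^2}^2\le C_{t,s}^2\|\sigma(s,w)\|_{z,\zeta}^2\sup_{\eta\in\R^d}\int_{\R^d}\frac{\mu(d\xi)}{(1+|\eta+\xi|^2)^{m-l}},
\]
and the Lipschitz bound $\|\sigma(s,w)\|_{z,\zeta}\le C_{t,s}(1+\|w\|_{z+m-l,\zeta})$ coming from $\sigma\in\mathrm{Lip}(z,\zeta,m-l,0)$ closes the argument. The hard part is the pointwise estimate of the second paragraph: a naive Peetre-type split $\jap{\eta+\xi}^{-2(m-l)}\lesssim\jap{\eta}^{-2(m-l)}\jap{\xi}^{2(m-l)}$ would introduce spurious $\jap{\xi}$-factors incompatible with the stated hypothesis on $\mu$, so the Fourier shifts must be performed jointly within the amplitude $a$ and within $\pdd^\zeta$ before invoking $L^2$-continuity, in order to preserve the coupled $\jap{\eta+\xi}$-dependence throughout.
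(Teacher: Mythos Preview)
Your approach diverges from the paper's at the crucial point, and the ``hard part'' you flag is a genuine obstruction that the paper simply sidesteps.

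The paper does \emph{not} try to push the modulation $e^{\ii(\cdot)\cdot\xi}$ through the FIO and then bound a shifted operator $\Op_{\phi_\xi}(a_\xi)$. Instead, it factors the target Sobolev--Kato weight as
\[
\langle\cdot\rangle^{z+m-l}\langle D\rangle^{\zeta}\,E_{l-m}(t,s)
=\langle D\rangle^{l-m}\,\widetilde E(t,s),
\qquad
\widetilde E(t,s):=\langle D\rangle^{m-l}\langle\cdot\rangle^{z+m-l}\langle D\rangle^{\zeta}\,E_{l-m}(t,s),
\]
so that $\widetilde E(t,s)$ is an $SG$ FIO of order $(z,\zeta)$, and the residual $\langle D\rangle^{l-m}$ is a pure Fourier multiplier placed \emph{after} the FIO. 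Plancherel then converts the $H^{z+m-l,\zeta}$ norm into $(2\pi)^{-d}\int\langle\xi\rangle^{2(l-m)}|\mathcal F[\widetilde E(\sigma(s,w)\,e_k)](\xi)|^2\,d\xi$. At this stage the paper expands the Fourier transform of the product via convolution with $\widehat{\widehat{f_k\mu}}=(2\pi)^d f_k\mu$, applies Bessel's inequality in $L^2_\mu$ with respect to $\{f_k\}$, performs the change of variable $\xi=\eta+\theta$, and pulls out $\sup_{\theta}\int\langle\theta+\eta\rangle^{2(l-m)}\mu(d\eta)$. What remains is just $\|\mathcal F(\widetilde E(t,s)\sigma(s,w))\|_{L^2}^2$, which is controlled by $\|\sigma(s,w)\|_{z,\zeta}^2$ via the $(0,0)$-continuity of $\widetilde E(t,s)\langle D\rangle^{-\zeta}\langle\cdot\rangle^{-z}$ (Theorems \ref{thm:compi} and \ref{thm:SGAF}). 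No shifted phase or amplitude ever appears.

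By contrast, your route forces you to estimate $\|\Op_{\phi_\xi}(a_\xi)\sigma(s,w)\|$ uniformly in $\xi$, but $\phi_\xi(x,\eta)=\phi(x,\eta+\xi)$ fails the $SG$-phase conditions of Definition \ref{def:phase} (for instance $\langle\partial_x\phi_\xi\rangle\asymp\langle\eta+\xi\rangle$, not $\langle\eta\rangle$), so Theorem \ref{thm:SGAF} is not available. Your suggested fix of ``performing the shifts jointly within $a$ and within $\langle D\rangle^\zeta$'' still leaves the weight $\langle\cdot\rangle^{z+m-l}$, a multiplication operator in $x$, which does not commute with the modulation and does not shift; the coupled $\langle\eta+\xi\rangle$ factor from the amplitude and the $\langle\eta\rangle^\zeta$ factor from the Sobolev norm cannot be merged without a Peetre split that you rightly identify as fatal. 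The paper's device of peeling off $\langle D\rangle^{l-m}$ \emph{on the output side}, so that the spectral decay factor is produced by a Fourier multiplier acting \emph{after} the FIO rather than by the FIO's own amplitude, is precisely the idea your argument is missing.
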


\begin{remark} Lemma \ref{lem:weightedpesz} is the key result to prove Theorems \ref{thm:semilinearweak} and \ref{thm:semilinearinvolutive}. 
It is a generalization, for higher order equations
and different functional spaces, of Lemma 2.2 in \cite{peszat}. There, the author deals with the case
$m=2$ and $l=1$, related to the wave equation, and works with a multiplication
operator
by a test function $w$, obtaining an estimate of the corresponding Hilbert-Schmidt norm
involving a weighted $L^2$ norm of $w$. 
\end{remark}

\begin{proof}[Proof of Lemma \ref{lem:weightedpesz}]
Let us fix an orthonormal basis $\{e_k\}_{k\in\N}=\{\widehat{f_k\mu}\}_{k\in\N}$ of $\mathcal H_\Xi$, where $\{f_k\}_{k\in\N}$ is an orthonormal basis in $L^2_{\mu,s}$. We compute
\beqs\nonumber
||\Phi(t,s)||_{L_2^0(\mathcal H_\Xi, H^{z+m-l,\zeta})}^2&=&\sum_{k\in\N}||E_{l-m}(t,s)\sigma(s,w)\widehat{f_k\mu}||_{H^{z+m-l,\zeta}}^2
\\\nonumber
&=&\sum_{k\in\N}||\langle D\rangle^{l-m}\langle D\rangle^{m-l}\langle \cdot\rangle^{z+m-l}\langle D\rangle^\zeta E_{l-m}(t,s)\sigma(s,w)\widehat{f_k\mu}||_{L^2}^2
\\\nonumber
&=&\sum_{k\in\N}||\langle D\rangle^{l-m}\widetilde E(t,s)\sigma(s,w)\widehat{f_k\mu}||_{L^2}^2
\\\label{las}
&=&(2\pi)^{-d}\sum_{k\in\N}\int_{\R^d}\langle\xi\rangle^{2(l-m)}\left\vert\mathcal F\left(
\widetilde E(t,s)\sigma(s,w)\widehat{f_k\mu}\right)\right\vert^2(\xi)d\xi
\eeqs
with $\widetilde E(t,s)=\langle D\rangle^{m-l}\langle \cdot\rangle^{z+m-l}\langle D\rangle^\zeta E_{l-m}(t,s)$ family of FIOs of order $(z,\zeta).$
Now, using the well-known fact that the Fourier transform of a product is the ($(2\pi)^{-d}$ multiple of the)
convolution of the Fourier transforms, the 
property $f_k(-x)=f_k(x)$ (by the definition of $L^2_{\mu,s}$), that 
$\{f_k\}$ is an orthonormal system in $L^2_{\mu}$, and Bessel's inequality,
we get
\beqsn
(2\pi)^{-d}\sum_{k\in\N}\left\vert\mathcal F\left(\widetilde E(t,s)\sigma(s,w)\widehat{f_k\mu}\right)\right\vert^2(\xi)&=&(2\pi)^{-2d}\sum_{k\in\N}|\mathcal F\left(\widetilde E(t,s)\sigma(s,w)\right)\ast\widehat{\widehat{f_k\mu}}|^2(\xi)
\\
&=&(2\pi)^{-d}\sum_{k\in\N}|\mathcal F\left(\widetilde E(t,s)\sigma(s,w)\right)\ast f_k\mu|^2(\xi)
\\
&=&(2\pi)^{-d}\sum_{k\in\N}\left\vert\int_{\R^d}\left[\mathcal F\left(\widetilde E(t,s)\sigma(s,w)\right)\right](\xi-\eta) f_k(\eta)\mu(d\eta)\right\vert^2
\\
&\le &(2\pi)^{-d}\int_{\R^d}\left\vert\mathcal F\left(\widetilde E(t,s)\sigma(s,w)\right)\right\vert^2(\xi-\eta)\mu(d\eta).
\\
\eeqsn
Inserting this in \eqref{las}, and using the continuity of $\widetilde E$ on Sobolev-Kato spaces we finally get: 
\beqs\nonumber
||\Phi(t,s)||_{L_2^0(\mathcal H_\Xi, H^{z+m-l,\zeta})}^2&\leq&(2\pi)^{-d}
\int_{\R^d}\int_{\R^d}\langle\xi\rangle^{2(l-m)}\left\vert\mathcal F\left(\widetilde E(t,s)\sigma(s,w)\right)\right\vert^2(\xi-\eta)\mu(d\eta)d\xi
\\\nonumber
&= &(2\pi)^{-d}\int_{\R^d}\int_{\R^d}\langle\eta+\theta\rangle^{2(l-m)}\left\vert\mathcal F\left(\widetilde E(t,s)\sigma(s,w)\right)\right\vert^2(\theta)\mu(d\eta)d\theta
\\\nonumber
&\leq &(2\pi)^{-d}\left(\sup_{\theta\in\R^d}\int_{\R^d}\langle \theta+\eta\rangle^{2(l-m)}\mu(d\eta)\right)\int_{\R^d}\left\vert\mathcal F\left(\widetilde E(t,s)\sigma(s,w)\right)\right\vert^2(\theta)d\theta
\\\label{battezzata}
&=&(2\pi)^{-d}\left(\sup_{\theta\in\R^d}\int_{\R^d}\langle \theta+\eta\rangle^{2(l-m)}\mu(d\eta)\right)\|\mathcal F(\widetilde E(t,s)\sigma(s,w))\|_{L^2}^2
\\\nonumber
&\leq &\left(\sup_{\theta\in\R^d}\int_{\R^d}\langle \theta+\eta\rangle^{2(l-m)}\mu(d\eta)\right)C_{t,s}^2\|\sigma(s,w)\|_{z,\zeta}^2
\\\nonumber
&\leq &\left(\sup_{\theta\in\R^d}\int_{\R^d}\langle \theta+\eta\rangle^{2(l-m)}\mu(d\eta)\right)C_{t,s}^2C_s^2\left(1+\|w\|_{z+m-l,\zeta}\right)^2,
\eeqs
where $C_{t,s}$ stands for the norm in $\scrL(H^{z,\zeta},H^{z,\zeta})$ of the FIO $\widetilde E(t,s)\langle D\rangle^{-\zeta}\x^{-z}$, which,
by Theorem \ref{thm:compi}, has amplitude of order $(0,0)$. Since $\sigma\in \mathrm Lip(z,\zeta,m-l,0)$, $C_s$ is the constant in Definition \ref{def:lip}.
\end{proof}

%
\subsection{Function-valued solutions for semilinear hyperbolic equations of arbitrary order.}\label{subs:mainslin}

We are finally ready to deal with existence and uniqueness of a function-valued solution  for the Cauchy problem \eqref{cp} under conditions \eqref{roots2} and either \eqref{def:strict} or \eqref{def:constmult} or \eqref{def:involutive}.

In Theorem \ref{thm:semilinearweak} we study in parallel the strictly hyperbolic case and the weakly hyperbolic case with roots of constant multiplicity. In Theorem \ref{thm:semilinearinvolutive} we state a similar result for the involutive case.

%
\begin{theorem}\label{thm:semilinearweak} 
Let us consider the Cauchy problem \eqref{cp} for a hyperbolic SPDE \eqref{eq:SPDE}, where the partial differential operator $L$ of the form \eqref{elle2} satisfies \eqref{roots2}.
Moreover, assume that $L$ is weakly $SG$-hyperbolic with constant multiplcities, 
that is, $\caL_m$ satisfies \eqref{roots} and the characteristic roots $\tau_j$, $j=1,\dots, m$, 
can be divided into $n$ groups, $1\leq n< m$, of distinct and separated roots, 
in the sense that, possibly after a reordering of the $\tau_j$, $j=1,\dots, m$, there exist 
$l_1,\ldots l_n\in\N$ with $l_1+\ldots+l_n=m$ and $n$ sets
\[ G_1=\{\tau_1=\cdots=\tau_{\nu_1}\},\quad G_2=\{\tau_{\nu_1+1}=\cdots=\tau_{\nu_1+\nu_2}\},\quad \ldots \quad 
G_n=\{\tau_{m-\nu_n+1}=\cdots=\tau_{m}\},\]
satisfying \eqref{def:constmult} for some constant $C>0$. Assume also that $L$ is of Levy type, that is, with the notation of Corollary \ref{cor:resort}, it satisfies \eqref{eq:LC}.
Suppose that 
$\gamma,\sigma\in\mathrm{Lip_{loc}}(z,\zeta,m-l,0)$,
$z,\zeta\in\R$, in some sufficiently small open subset $U\subset H^{z+m-1,\zeta+m-1}(\R^d)\hookrightarrow H^{z+m-l,\zeta}(\R^d)$.
Finally, assume for the spectral measure that
\begin{equation}\label{eq:meascm}
\sup_{\eta\in\Rd}\int_\Rd \frac{1}{(1+|\xi+\eta|^2)^{m-l}}\mu(d\xi) < \infty,
\end{equation}
where $l=\max_{j=1,\dots,n}\nu_j$ is the maximum multiplicity of the roots of $L_m$.

Then, there exists a time horizon $0< T_0\leq T$ such that, for any choice of $u_j\in H^{z+m-1-j, \zeta+m-1-j}(\Rd)$, $0\leq j\leq m-1$,
$u_0\in U$,
the Cauchy problem \eqref{cp} admits a unique solution $u\in L^2([0,T_0]\times\Omega, H^{z+m-l,\zeta}(\R^d))$ satisfying 
\begin{equation}\label{eq:solslin}
	\begin{aligned}
	u(t,x)&=v_0(t,x)+\int_0^t\int_{\R^d}\Lambda(t,s,x,y)\gamma(s,y,u(s,y))\,dyds +
	\int_0^t\int_{\R^d}\Lambda(t,s,x,y)\sigma(s,y, u(s,y))\dot{\Xi}(s,y)\,dyds
	\end{aligned}
\end{equation}
where $\Lambda(t,s)$ is obtained in \eqref{eq:uexplicitbis}, the first integral in \eqref{eq:solslin} is a Bochner integral, and the second integral in \eqref{eq:solslin} is understood as the stochastic integral of the $H^{z+m-l,\zeta}(\R^d)$-valued stochastic process $E_{l-m}(t,\cdot)\sigma(\cdot,u(\cdot))$ with respect to the stochastic noise $\Xi$, in the sense explained in Subsection \ref{subH}.
\end{theorem}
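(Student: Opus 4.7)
The plan is to apply Banach's fixed point theorem to the integral operator
\[
\mathcal{T}u(t) := v_0(t) + \int_0^t E_{l-m}(t,s)\gamma(s,u(s))\,ds + \int_0^t E_{l-m}(t,s)\sigma(s,u(s))\,d\Xi(s),
\]
acting on a suitable closed ball $\mathcal B_R$ of the Banach space
$X_{T_0} := L^2([0,T_0]\times\Omega; H^{z+m-l,\zeta}(\R^d))$, where
$v_0$ is the deterministic term identified in \eqref{eq:uexplicitbis} and $\Lambda(t,s,\cdot,\cdot)$
is the Schwartz kernel of $E_{l-m}(t,s)$ as in \eqref{eq:truekersolbis}.
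Since the Lipschitz hypotheses on $\gamma,\sigma$ are only local on
$U\subset H^{z+m-1,\zeta+m-1}(\R^d)$, I would take $\mathcal B_R$ to be a small ball around
$v_0$ and eventually shrink $T_0$ so that every iterate remains in $U$.

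To verify $\mathcal{T}(\mathcal B_R)\subset \mathcal B_R$, I would estimate the three summands separately. The control of $v_0$ is provided by Subsection \ref{subs:reduction}. For the Bochner integral, the mapping property $E_{l-m}(t,s)\in\caO(l-m,l-m)$ combined with $\gamma\in \mathrm{Lip_{loc}}(z,\zeta,m-l,0)$ yields
$\|E_{l-m}(t,s)\gamma(s,u(s))\|_{z+m-l,\zeta}\lesssim 1+\|u(s)\|_{z+m-l,\zeta}$, after which Minkowski's inequality and Fubini close the estimate. For the stochastic integral, the central input is Lemma \ref{lem:weightedpesz} applied with $w=u(s)$, which, combined with the It\^{o} isometry \eqref{isomhilb} in the Hilbert space $H^{z+m-l,\zeta}(\R^d)$, gives
\[
\E\Bigl\|\int_0^t E_{l-m}(t,s)\sigma(s,u(s))\,d\Xi(s)\Bigr\|_{z+m-l,\zeta}^2 \lesssim M_\mu\int_0^t C_s^2\bigl(1+\E\|u(s)\|_{z+m-l,\zeta}^2\bigr)\,ds,
\]
with $M_\mu := \sup_{\eta}\int \<\xi+\eta\>^{2(l-m)}\mu(d\xi)<\infty$ by hypothesis \eqref{eq:meascm}. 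The contraction estimate is derived analogously, replacing $\gamma(s,u)$ and $\sigma(s,u)$ by the increments $\gamma(s,u)-\gamma(s,\tilde u)$ and $\sigma(s,u)-\sigma(s,\tilde u)$, and invoking the Lipschitz part of Definition \ref{def:lip} together with Lemma \ref{lem:weightedpesz} applied to $w=u(s)-\tilde u(s)$ and the difference $\sigma(s,u)-\sigma(s,\tilde u)$; the argument in the proof of that lemma only uses the $H^{z,\zeta}$-norm of the second argument of $\sigma$, so it transposes verbatim.

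The resulting inequality is of the form $\|\mathcal T u-\mathcal T \tilde u\|_{X_{T_0}}^2\le K(T_0)\|u-\tilde u\|_{X_{T_0}}^2$ with $K(T_0)\to 0^+$ as $T_0\to 0^+$, so shrinking $T_0$ yields a strict contraction on $\mathcal B_R$ and Banach's theorem produces the unique fixed point. The main obstacle I expect is the compatibility between the two regularity scales in play: the stochastic and deterministic estimates naturally control the $H^{z+m-l,\zeta}$-norm, yet to invoke the local Lipschitz hypotheses on $\gamma,\sigma$ the iterates must remain inside $U\subset H^{z+m-1,\zeta+m-1}(\R^d)$. I would overcome this by running, in parallel, a second energy estimate in the higher-regularity scale: since $E_{l-m}(t,s)$ acts continuously on every Sobolev-Kato space with the uniform loss $(l-m,l-m)$, and $v_0$ is regular in this scale, iterating the bound and applying Gronwall's inequality produces a uniform-in-$t$ control of $\|u(t)\|_{z+m-1,\zeta+m-1}$ on $[0,T_0]$, provided $u_0\in U$ is close enough to the centre of $U$ and $T_0$ is small. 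This ensures the whole Picard iteration stays in $U$, and the fixed point obtained in $X_{T_0}$ is the desired function-valued solution of \eqref{cp} in the sense of \eqref{eq:solslin}.
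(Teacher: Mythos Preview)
Your plan coincides with the paper's proof: define the map $\mathcal T$ on $L^2([0,T_0]\times\Omega;H^{z+m-l,\zeta})$, estimate $v_0$, the Bochner term via the $\caO(l-m,l-m)$ mapping property of $E_{l-m}(t,s)$, and the stochastic term via the It\^o isometry \eqref{isomhilb} together with Lemma \ref{lem:weightedpesz}; then derive the Lipschitz bound in the same way and shrink $T_0$ to obtain a contraction. The computations you sketch are exactly the ones carried out in the paper.

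The only substantive difference is your treatment of the \emph{local} Lipschitz hypothesis. The paper's proof runs the contraction on the whole space $L^2([0,T_0]\times\Omega;H^{z+m-l,\zeta})$ and uses Definition \ref{def:lip} as if it held globally, without explicitly confining the iterates to $U$; your idea of working on a ball $\mathcal B_R$ around $v_0$ is the natural refinement. However, your proposed parallel estimate in the stronger scale $H^{z+m-1,\zeta+m-1}$ does not close as written: by hypothesis $\gamma(s,u),\sigma(s,u)$ lie only in $H^{z,\zeta}$, so $E_{l-m}(t,s)$ maps them into $H^{z+m-l,\zeta+m-l}$, not into $H^{z+m-1,\zeta+m-1}$ when $l>1$, and you therefore cannot propagate the $H^{z+m-1,\zeta+m-1}$ norm through the iteration. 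The cleaner way to handle the locality, consistent with Remark \ref{rem:lipbase}, is to observe that the Lipschitz estimates are only needed for arguments in $H^{z+m-l,\zeta}$ (since $r=m-l$, $\rho=0$), take $\mathcal B_R$ as a ball in $L^2([0,T_0]\times\Omega;H^{z+m-l,\zeta})$ on which the $\mathrm{Lip_{loc}}$ bounds hold, show $\mathcal T(\mathcal B_R)\subset\mathcal B_R$ for $T_0$ small using the linear-growth part of Definition \ref{def:lip}, and then run the contraction there; the role of $U\subset H^{z+m-1,\zeta+m-1}$ is only to guarantee that $u_0$ has enough regularity for $v_0(t)$ to land in the right space.
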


\begin{remark}
	Notice that the noise $\Xi$
	defines a cylindrical Wiener process on 
	$(\mathcal H_\Xi(\R^d),\langle\cdot,\cdot\rangle_{\mathcal H_\Xi(\R^d)})$ 
	with values in $H^{z+m-l,\zeta}(\R^d)$, by Lemma \ref{lem:weightedpesz}. 
\end{remark}
\begin{remark} Notice that, if the correlation measure $\Gamma$ is absolutely continuous, then condition \eqref{eq:meascm} for $l=1$ (i.e., in the strictly hyperbolic case)
is equivalent to 
\begin{equation}\label{eq:measstbis}
  \int_\Rd \frac{1}{(1+|\xi|^2)^{m-1}}\mu(d\xi) < \infty,
\end{equation}
see \cite{peszat}. Condition \eqref{eq:measstbis} with $m=2$ on the spectral measure is the one needed for the existence and uniqueness of both a function-valued solution and a random-field solution to a second order SPDE well-known in literature, namely, the stochastic wave equation. 

Moreover, the same condition \eqref{eq:meascm}, with $l=1$. has been found in \cite{alessiandre}, looking for random-field solutions to linear strictly hyperbolic equations with uniformly bounded coefficients. The more general condition
\eqref{eq:meascm} is exactly the one obtained in \cite{ACSlinear}, looking for random-field solutions to
linear hyperbolic SPDEs with possibly unbounded variable coefficients. 
Thus, the class of the stochastic noises we can deal with if we want to obtain either a function-valued or a random-field solution of the Cauchy problem for an SPDE is described by \eqref{eq:meascm} for all $SG$-hyperbolic operators $L$. Condition \eqref{eq:meascm} can be understood as a {\emph {compatibility condition}} between the noise and the equation: as the order of the equation increases, we can allow for rougher stochastic noises $\Xi$; as the maximum multiplicity of the roots decreases (i.e., as the regularity of the operator $L$ increases), we can allow for rougher stochastic noises $\Xi$.
\end{remark}

\noindent We give here below a couple of examples of right-hand side that we can allow in \eqref{cp}. 

\begin{example}
	Let $\sigma(t, u)=u^2$. Then, $\sigma$ satisfies all the conditions
	required in Theorem \ref{thm:semilinearweak}. More generally,
	we can allow also $\sigma(t,u)=u^n$, $n\in\N$, $n>2$, see Remark 
	\ref{rem:intpowers}.
\end{example}

\begin{example}\label{ex:sigmanonlin}
	A class of explicitly $(t,x)$-dependent nonlinear
	stochastic coefficients which satisfy the
	requirements of Theorem \ref{thm:semilinearweak} are those of
	the form
	\beqs\label{expowerbis}
		\sigma(t,x,u)=\langle x\rangle^{l-m}\cdot\widetilde{\sigma}(t,u),
	\eeqs
	where $\widetilde{\sigma}\in \mathrm{Lip_{loc}}(z+m-l,\zeta,0,0)$. Indeed, the function $\sigma$ in \eqref{expowerbis} fulfills the assumptions of Theorem \ref{thm:semilinearweak}, being an element of $\mathrm{Lip_{loc}}(z,\zeta,m-\ell,0)$. In fact, for every $w$ in a sufficiently small subset $U\subset H^{z+m-l,\zeta}(\Rd)$, we have
	\[||\sigma(t,\cdot,w)||_{z,\zeta}=||\tilde\sigma(t,\cdot,w)||_{z+m-l,\zeta}\leq C(t)\left(1+||w||_{z+m-l,\zeta}\right),\]
	and the verification of $||\sigma(t,\cdot,w_1)-\sigma(t,\cdot, w_2)||_{z,\zeta}\leq C(t)||w_1-w_2||_{z+m-l,\zeta}$ follows similarly.
	\end{example}

The proof of Theorem \ref{thm:semilinearweak} consists of the following 4 steps:
\begin{enumerate}
\item factorization of the operator $L$;
\item reduction of \eqref{cp} to an equivalent first order system of the form \eqref{sys},
with the matrices $\kappa_1$ and $\kappa_0$ satisfying the assumptions described in Section \ref{sec:fio} above;
\item construction of the fundamental solution to \eqref{sys}, and then (formally) of the solution $u$ to \eqref{cp};
\item application of a fixed point scheme to obtain that the function-valued solution $u$ of \eqref{cp} is well-defined. 
\end{enumerate}
For steps (1), (2) and (3) we can rely on Proposition \ref{prop:mlpf}, Corollary \ref{cor:resort} and 
Lemma \ref{lem:Dt}, and on the procedure explained 
in \cite{Coriasco:998.3}.
We recall below the main aspects of this microlocal approach, for the convenience of the reader. A more detailed explanation, where the reduction to a first order system is performed first in the strictly hyperbolic case $m=n=1$ and then in the weakly hyperbolic one can be found in \cite{ACSlinear}.

\begin{proof}[Proof of Theorem \ref{thm:semilinearweak}]
We follow the computations of Subsection \ref{subs:reduction}. First, we perform a change of variable defining the ($nm$)-dimensional vector of unknowns $W$ having entries given by \eqref{eq:Wpbis}; our equation $Lu(t)=g(t,u)$, where formally $g(t,u):= \gamma(t,u) + \sigma(t,u)\dot{\Xi}(t)$
is so equivalent to the semilinear hyperbolic system of the first order \eqref{eq:Wmbis} (with $g(t,u)$ instead of $g(t)$) in the unknown $W$. 
The system in the unknown $W$ (with dimension $nm$) has the form 
\begin{equation}\label{syssl}
\begin{cases}
	(D_t - \Op(\kappa_1(t)) - \Op(\kappa_0(t)))W(t) = F(t,W(t))+G(t,W(t))\dot\Xi(t), & t\in [0,T], \\
	W(0)  = W_0,
\end{cases}
\end{equation}
with $\kappa_1\in C^\infty([0,T],S^{1,1})$ real-valued and diagonal, $\kappa_0\in C^\infty([0,T],S^{0,0})$, and $(nm)$-dimensional vectors $F(t,W(t))$, $G(t,W(t))$ given by
	\[
		F(t,W(t))=(\underbrace{\tilde F(t,W),\dots,\tilde F(t,W(t))}_{n\text{ times}})^t, \quad \tilde F(t,W(t))=(\underbrace{0,\dots,0}_{m-1\text{ times}},\gamma(t, W_1^{(1)}))^t,
	\]
	\[
		G(t,W(t))=(\underbrace{\tilde G(t,W),\dots,\tilde G(t,W(t))}_{n\text{ times}})^t, \quad \tilde G(t,W(t))=(\underbrace{0,\dots,0}_{m-1\text{ times}},\sigma(t, W_1^{(1)}))^t.
	\]
We still have that $W_0=\Op(b)U_0$, with a ($mn\times m$)-dimensional block-matrix symbol $b$ with structure analogous to \eqref{eq:bm} and entries with the same orders, so, by the assumptions of Theorem \ref{thm:semilinearweak}, we get $W_0\in H^{z,\zeta}$.

By Theorem \ref{thm:fundsolabs} we can formally construct, via Duhamel's formula, the ``mild solution" to \eqref{syssl}:
\[W(t)=E(t,0)W_0+i\ds\int_0^t E(t,s)F(s,W(s))ds+i\ds\int_0^t E(t,s)G(s,W(s))d\Xi(s),\quad t\in[0,T_0],
\] 
for a suitable $T_0\in(0, T]$. Now, we go back to the equation \eqref{eq:SPDE} to get its (formal) solution $u$. By Lemma \ref{lem:solregbis}, we know that
$u(t)$ is the first entry of the vector $\Op(\varUpsilon_n(t))W(t)$, and that the first row of $\varUpsilon_n(t)$ is a symbol of order $(l-m,l-m)$. Thus, exactly as in \eqref{eq:uexplicitbis}, we come (formally) to \eqref{eq:solslin}, that is we get
\beqsn
	u(t,x)&=&v_0(t,x)+\int_0^t\int_{\R^d}\Lambda(t,s,x,y)\gamma(s,y,u(s,y))\,dyds +\int_0^t\int_{\R^d}\Lambda(t,s,x,y)\sigma(s,y, u(s,y))\dot{\Xi}(s,y)\,dyds
\\
&=&v_0(t,x)+\int_0^tE_{l-m}(t,s)\gamma(s,u(s))ds +\int_0^t E_{l-m}(t,s)\sigma(s,u(s))\dot{\Xi}(s)ds,
\eeqsn
where $\displaystyle v_0\in \bigcap_{j\ge0} C^j([0,T_0], H^{z+m-l-j,\zeta+m-l-j})$ depends on the Cauchy data, and $\Lambda\in C^\infty(\Delta_{T_0},\caS^\prime)$ is, for any $(t,s)\in\Delta_{T_0}$, the Schwartz kernel of the Fourier integral operator family $E_{l-m}$, with amplitudes of order $(l-m,l-m)$.
We then construct the map $u\to \mathcal{T}u$ on $L^2([0,T_0]\times\Omega, H^{z+m-l,\zeta}(\R^d))$, defined as follows:
\beqs\label{calm}
\mathcal{T}u(t)&:=&
v_0(t)+\ds\int_0^t E_{l-m}(t,s)\gamma(s,u(s))ds+\ds\int_0^t E
_{l-m}(t,s)\sigma(s,u(s))dB_s, \quad t\in[0,T_0],
\\
\nonumber
&:=&v_0(t)+\mathcal{T}_1u(t)+\mathcal{T}_2u(t),
\eeqs
where the last integral on the right-hand side is understood as the stochastic integral of the stochastic process $E_{l-m}(t,\cdot)\sigma(\cdot,u(\cdot))\in L^2([0,T_0]\times\Omega, H^{z+m-l,\zeta})$ with respect to the cylindrical Wiener process $\{W_t(h)\}_{t\in[0,T], h\in H^{z+m-l,\zeta}}$ associated with the random noise $\Xi(t)$, which is well-defined by Lemma \ref{lem:weightedpesz} and takes values in $H^{z+m-l,\zeta}$. 

To prove that the solution \eqref{eq:solslin} of the Cauchy problem \eqref{cp} is indeed well-defined, we have to check that $$\mathcal{T}\colon L^2([0,T_0]\times\Omega, H^{z+m-l,\zeta}(\R^d))\longrightarrow L^2([0,T_0]\times\Omega, H^{z+m-l,\zeta})$$ is well-defined, it is Lipschitz continuous on $L^2([0,T_0]\times\Omega, H^{z+m-l,\zeta})$, and it becomes a contraction if we take $T_0$ small enough. Then, an application of Banach's fixed point Theorem will provide existence of a unique solution $u\in L^2([0,T_0]\times\Omega, H^{z+m-l,\zeta})$ satisfying $u=\mathcal{T}u$, that is \eqref{eq:solslin}.
\\
To verify that $\mathcal{T}u$ in \eqref{calm} belongs to $L^2([0,T_0]\times\Omega, H^{z+m-l,\zeta})$ for every $u\in L^2([0,T_0]\times\Omega, H^{z+m-l,\zeta})$ we notice that:
\\
-  $\displaystyle v_0\in \bigcap_{j\ge0} C^j([0,T_0], H^{z+m-l-j,\zeta+m-l-j})\subset L^2([0,T_0]\times\Omega, H^{z+m-l,\zeta});$
\\
- $\mathcal{T}_1u$ is in $L^2([0,T_0]\times\Omega, H^{z+m-l,\zeta})$; indeed, $\mathcal{T}_1u(t)$ is defined as the Bochner integral on $[0,t]$ of the function $s\to E_{l-m}(t,s)\gamma(s,u(s))$ with values in $L^2(\Omega, H^{z+m-l,\zeta})$, and, by the properties of Bochner integrals, the continuity of $E_{l-m}(t,s)$ on Sobolev-Kato spaces, and the fact that 
$\gamma\in \Lip(z,\zeta,m-l,0)$, we have
\beqsn%
\|\mathcal{T}_1u\|_{L^2([0,T_0]\times\Omega, H^{z+m-l,\zeta})}^2=&&\E\left[\ds\int_0^{T_0}\|\mathcal{T}_1u(t)\|_{z+m-l,\zeta}^2dt
\right]
=\ds\int_0^{T_0}\E\left[\left\|\ds\int_0^t E_{l-m}(t,s)(\gamma(s,u(s))ds\right\|_{z+m-l,\zeta}^2\right]dt
\\
&&\leq\ds\int_0^{T_0}\ds\int_0^t \E\left[\left\|E_{l-m}(t,s)(\gamma(s,u(s))\right\|_{z+m-l,\zeta}^2\right]dsdt
\\
&&\leq \ds\int_0^{T_0}\ds\int_0^t C_{t,s}^2\E\left[\left\|\gamma(s,u(s)))\right\|_{z,\zeta+l-m}^2\right]dsdt
\\
&&\leq \ds\int_0^{T_0}\ds\int_0^t C_{t,s}^2C_s^2\E\left[\left(1+\|u(s)\right\|_{z+m-l,\zeta+l-m})^2\right]dsdt
\\
&&\leq 2\left(\max_{0\leq s\leq t\leq T_0}C_{t,s}^2C_s^2\right)T_0\ds\int_0^{T_0} \left(1+\E\left[\left\|u(s)\right\|_{z+m-l,\zeta}^2\right]\right)ds
\\
&&=2C_{T_0}T_0(T_0+\|u\|^2_{L^2([0,T_0]\times\Omega, H^{z+l-m,\zeta})})<\infty;
\eeqsn
$ $
\\
- $\mathcal{T}_2u$ is in $L^2([0,T_0]\times\Omega, H^{z+m-l,\zeta})$, in view of the fundamental isometry \eqref{isomhilb}, 
Lemma \ref{lem:weightedpesz} and the fact that the expectation can be moved inside and outside time integrals, by Fubini's Theorem:
\beqsn
\|\mathcal{T}_2u\|_{L^2([0,T_0]\times\Omega, H^{z+m-l,\zeta})}^2&=&\E\left[\ds\int_0^{T_0}\|\mathcal{T}_2u(t)\|_{z+m-l,\zeta}^2dt\right]
=\ds\int_0^{T_0}\E\left[\left\|\ds\int_0^t E_{l-m}(t,s)\sigma(s,u(s))dW_s\right\|_{z+m-l,\zeta}^2\right]dt
\\
&=&\ds\int_0^{T_0}\ds\int_0^t \E\left[\left\|E_{l-m}(t,s)\sigma(s,u(s))\right\|_{L_0^2(\mathcal H_\Xi, H^{z+m-l,\zeta})}^2\right]dsdt
\\
&\leq&\ds\int_0^{T_0}\ds\int_0^t \E\left[ C^2_{(t,s)} \left(1+\|u(s)\|_{H^{z+m-l,\zeta}}\right)^2 \sup_{\eta\in\Rd}\int_\Rd \frac{1}{(1+|\xi+\eta|^2)^{m-l}}\mu(d\xi)\right]dsdt
\\
&=&\left(\sup_{\eta\in\Rd}\int_\Rd \frac{1}{(1+|\xi+\eta|^2)^{m-l}}\mu(d\xi)\right)\ds\int_0^{T_0}\ds\int_0^t C^2_{(t,s)}\E\left[ \left(1+\|u(s)\|_{H^{z+m-l,\zeta}}\right)^2\right]dsdt
\\
&\leq&2\left(\sup_{\eta\in\Rd}\int_\Rd \frac{1}{(1+|\xi+\eta|^2)^{m-l}}\mu(d\xi)\right) \left(\max_{0\leq s\leq t\leq T_0}C_{(t,s)}^2\right)T_0  \left(T_0+\ds\int_0^{T_0} \E\left[\left\|u(s)\right\|_{z+m-l,\zeta}^2\right]ds\right)
\\
&=& 2C_{T_0,m,l}T_0(T_0+\|u\|^2_{L^2([0,T_0]\times\Omega, H^{z+l-m,\zeta})})<\infty.
\eeqsn
Now, we take $u_1,u_2\in L^2([0,T_0]\times\Omega, H^{z+m-l,\zeta})$ and compute
\beqs\nonumber
\|\mathcal{T}u_1-\mathcal{T}u_2\|_{L^2([0,T_0]\times\Omega, H^{z+m-l,\zeta})}^2&\leq& 2\left(\|\mathcal{T}_1u_1-\mathcal{T}_1u_2\|_{L^2([0,T_0]\times\Omega, H^{z+m-l,\zeta})}^2+\|\mathcal{T}_2u_1-
\mathcal{T}_2u_2\|_{L^2([0,T_0]\times\Omega, H^{z+m-l,\zeta})}^2\right)
\\\label{pri}
&=&2\ds\int_0^{T_0}\E\left[\left\|\ds\int_0^t E_{l-m}(t,s)(\gamma(s,u_1(s))-\gamma(s,u_2(s)))ds\right\|_{z+m-l,\zeta}^2\right]dt
\\\label{se}
&+&2\ds\int_0^{T_0}\E\left[\left\|\ds\int_0^t E_{l-m}(t,s)(\sigma(s,u_1(s))-\sigma(s,u_2(s)))dB_s\right\|_{z+m-l,\zeta}^2\right]dt.
\eeqs
In the term \eqref{pri} here above we can move the expectation and the $(z+m-l,\zeta)-$norm inside the integral with respect to $s$. Then, by continuity of $E_{l-m}$ on Sobolev-Kato spaces, Definition \ref{def:lip}, and the embedding $H^{z+m-l,\zeta}\hookrightarrow H^{z+m-l,\zeta+l-m}$, we obtain 
\beqsn%
2\ds\int_0^{T_0}\E&&\left[\left\|\ds\int_0^t E_{l-m}(t,s)(\gamma(s,u_1(s))-\gamma(s,u_2(s)))ds\right\|_{z+m-l,\zeta}^2\right]dt
\\
&&\leq2\ds\int_0^{T_0}\ds\int_0^t \E\left[\left\|E_{l-m}(t,s)(\gamma(s,u_1(s))-\gamma(s,u_2(s)))\right\|_{z+m-l,\zeta}^2\right]dsdt
\\
&&\leq 2\ds\int_0^{T_0}\ds\int_0^t C_{t,s}^2\E\left[\left\|\gamma(s,u_1(s))-\gamma(s,u_2(s))\right\|_{z,\zeta+l-m}^2\right]dsdt
\\
&&\leq 2\ds\int_0^{T_0}\ds\int_0^t C_{t,s}^2C_s^2\E\left[\left\|u_1(s)-u_2(s)\right\|_{z+m-l,\zeta+l-m}^2\right]dsdt
\\
&&\leq 2\left(\max_{0\leq s\leq t\leq T_0}C_{t,s}^2C_s^2\right)T_0\ds\int_0^{T_0} \E\left[\left\|u_1(s)-u_2(s)\right\|_{z+m-l,\zeta}^2\right]ds
\\
&&=2C_{T_0}T_0\|u_1-u_2\|^2_{L^2([0,T_0]\times\Omega, H^{z+l-m,\zeta})}.
\eeqsn
To the term \eqref{se} we apply, here below, the fundamental isometry \eqref{isomhilb} to pass from the first to the second line, formula \eqref{battezzata} of Lemma \ref{lem:weightedpesz} to pass from the second to the third line, Definition \ref{def:lip} to pass from the third to the fourth line, and finally get:
\beqsn
2\ds\int_0^{T_0}\E&&\left[\left\|\ds\int_0^t E_{l-m}(t,s)(\sigma(s,u_1(s))-\sigma(s,u_2(s)))dB_s\right\|_{z+m-l,\zeta}^2\right]dt
\\
&&=2\ds\int_0^{T_0}\ds\int_0^t \E\left[\left\|E_{l-m}(t,s)(\sigma(s,u_1(s))-\sigma(s,u_2(s)))\right\|_{L_2^0(\mathcal H_\Xi, H^{z+m-l,\zeta})}^2\right]dsdt
\\
\\
&&\leq 2\ds\int_0^{T_0}\ds\int_0^t \E\left[
C^2_{t,s}\|\sigma(s,u_1(s))-\sigma(s,u_2(s))\|_{H^{z,\zeta}}^2\sup_{\eta\in\Rd}\int_\Rd \frac{1}{(1+|\xi+\eta|^2)^{m-l}}\mu(d\xi)\right]dsdt
\\
&&\leq 2\left(\sup_{\eta\in\Rd}\int_\Rd \frac{1}{(1+|\xi+\eta|^2)^{m-l}}\mu(d\xi)\right)\ds\int_0^{T_0}\ds\int_0^t C_{t,s}^2C_s^2\E\left[\left\|u_1(s)-u_2(s)\right\|_{z+m-l,\zeta}^2\right]dsdt
\\
&&\leq2C_{T_0}T_0\left(\sup_{\eta\in\Rd}\int_\Rd \frac{1}{(1+|\xi+\eta|^2)^{m-l}}\mu(d\xi)\right)\|u_1-u_2\|^2_{L^2([0,T_0]\times\Omega, H^{z+m-l,\zeta})}.
\eeqsn
Summing up, we have proved that
\[
\|\mathcal{T}u_1-\mathcal{T}u_2\|_{L^2([0,T_0]\times\Omega, H^{z,\zeta})}^2\leq  2C_{T_0}T_0\left(1+\sup_{\eta\in\Rd}\int_\Rd \frac{1}{(1+|\xi+\eta|^2)^{m-l}}\mu(d\xi)\right)\|u_1-u_2\|^2_{L^2([0,T_0]\times\Omega, H^{z+m-l,\zeta})} \,,
\]
that is, $\mathcal{T}$ is Lipschitz continuous on $L^2([0,T_0]\times\Omega, H^{z+m-l,\zeta})$. Moreover, in view of the assumption \eqref{eq:meascm},  if we take $T_0>0$ such that 
\beqs\label{time} 2C_{T_0}T_0\left(1+\sup_{\eta\in\Rd}\int_\Rd \frac{1}{(1+|\xi+\eta|^2)^{m-l}}\mu(d\xi)\right)< 1,\eeqs
then $\mathcal{T}$ becomes a strict contraction on $L^2([0,T_0]\times\Omega, H^{z+m-l,\zeta})$, and so it admits a unique fixed point $u=\mathcal{T}u$. That is, there exists a unique, well-defined solution of \eqref{cp}. To prove the estimate \eqref{time}, it is sufficient to take $T_0$ small enough, since the constant $C_{T_0}$ is continuously dependent on $T_0$.
The proof is complete.
\end{proof}

%

\subsection{The weakly hyperbolic case with involutive roots}\label{subs:inv}
We conclude the section with the statement of a result of existence and uniqueness of a solution to the Cauchy problem \eqref{cp} for the SPDE \eqref{eq:SPDE} in the more general case of involutive roots, cfr. \eqref{def:involutive}. 
With these even weaker hyperbolicity assumption we can still switch from \eqref{cp} to an equivalent first order system
\eqref{sys}, but at the price, as usual, of some further requirement on the lower order terms of the operator $L$. Namely, we ask, that $L$ admits a factorization \eqref{eq:Lfactor} with symbols $h_{jk}$, $j=1,\dots,m$, $k=1,\dots,l_j$, such that $h_{jk}\in C^\infty([0,T],S^{0,0})$.
Notice that this is automatically true in the case of strict hyperbolicity, and that only the request on the order of the symbols $h_{jk}$ has to be fulfilled in the case of hyperbolicity with constant multiplicities. 
We say, in the present case, that $L$ satisfies the \textit{strong Levi condition}, or, equivalently, that it is \textit{of strong Levi type}.
We state and discuss here below our further result, under the hypothesis \ref{hyp:in}.

\begin{theorem}\label{thm:semilinearinvolutive} 
Let us consider the Cauchy problem \eqref{cp} for an SPDE \eqref{eq:SPDE}, where the partial differential operator $L$ of the form \eqref{elle2} satisfies the hyperbolicity hypothesis \eqref{roots2}. Assume that $L$ is $SG$-hyperbolic with involutive roots, that is, 
all the roots of the principal part $L_m$ of $L$ are real-valued and
form an involutive system, in the sense of \eqref{def:involutive}. Moreover, assume that $L$ is of strong Levi type.
Suppose that 
$\gamma,\sigma\in\mathrm{Lip_{loc}}(z,\zeta,0,0)$,
$z,\zeta\in\R$, in some sufficiently small open subset $U\subset H^{z+m-1,\zeta+m-1}(\R^d)\hookrightarrow H^{z,\zeta}(\R^d)$.
Finally, assume that the spectral measure satisfies the compatibility condition
\begin{equation}\label{eq:measinv}
  \int_\Rd \mu(d\xi) < \infty.
\end{equation}
\\
Then, there exists a time horizon $0\leq T_0\leq T$ such that for any choice of $u_j\in H^{z+m-1-j, \zeta+m-1-j}(\Rd)$, $0\leq j\leq m-1$, $u_0\in U$,
the Cauchy problem \eqref{cp} admits a unique solution $u\in L^2([0,T_0]\times\Omega, H^{z,\zeta}(\R^d))$ satisfying
\beqsn
	u(t,x)&=&v_0(t,x)+\int_0^t\int_{\R^d}\Lambda(t,s,x,y)\gamma(s,y,u(s,y))\,dyds +\int_0^t\int_{\R^d}\Lambda(t,s,x,y)\sigma(s,y, u(s,y))\dot{\Xi}(s,y)\,dyds,
\eeqsn
where $\Lambda(t,s)$ is obtained through the Schwartz kernels of Fourier integral operators, the first integral is a Bochner integral, and the second integral is intended to be the stochastic integral of the $H^{z,\zeta}(\R^d)$-valued stochastic process $E_0(t,\cdot)\sigma(\cdot,u(\cdot))$ with respect to the stochastic noise 
$\Xi$.
\end{theorem}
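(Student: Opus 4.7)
The plan is to adapt to the involutive setting the four-step scheme that establishes Theorem \ref{thm:semilinearweak}: (i) factorize $L$ and reduce the scalar equation to an equivalent $m\times m$ first order hyperbolic system of the form \eqref{sys}; (ii) construct its fundamental solution $E(t,s)$ by Theorem \ref{thm:fundsolabs}; (iii) express the formal mild solution of \eqref{cp} through a kernel $\Lambda(t,s,\cdot,\cdot)$ obtained from $E(t,s)$ via Lemma \ref{lem:solregbis}; (iv) close the argument by Banach's fixed point theorem in $L^2([0,T_0]\times\Omega, H^{z,\zeta}(\R^d))$. For step (i), the involutive commutator condition \eqref{def:involutive}, combined with the strong Levi assumption $h_{jk}\in C^\infty([0,T],S^{0,0})$, is exactly what ensures a Mizohata-type factorization
\[L = (D_t-\Op(\tau_m(t)))\cdots(D_t-\Op(\tau_1(t))) + \sum_{j=1}^m \Op(r_j(t))\,D_t^{m-j},\]
with $r_j\in C^\infty([0,T], S^{-\infty,-\infty})$ and no order-increasing couplings between factors. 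Setting $W_{j+1}=\prod_{k=1}^{j}(D_t-\Op(\tau_k(t)))\,u$, $j=0,\ldots,m-1$, and using an involutive-type extension of Lemma \ref{lem:Dt} to re-express the $D_t^k u$ in terms of the $W_j$, yields a first order system with diagonal principal part $\kappa_1=\mathrm{diag}(\tau_1,\ldots,\tau_m)\in C^\infty([0,T],S^{1,1})$ and $\kappa_0\in C^\infty([0,T],S^{0,0})$, equivalent to $Lu(t)=\gamma(t,u)+\sigma(t,u)\dot\Xi(t)$.

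Step (ii) proceeds via Theorem \ref{thm:fundsolabs}; the key novelty with respect to the constant multiplicity case is that, by Theorem \ref{thm:fundsol}, the fundamental solution $E(t,s)$ is only available as the strong limit of a sequence $\{E^{(N)}(t,s)\}$ of matrices of $SG$ FIOs of type I, whose regular phase functions arise as multi-products of solutions of the eikonal equations associated with $\tau_1,\ldots,\tau_m$ (via Theorem \ref{thm:mainprbis}), and whose amplitudes lie in $C^1(\Delta_{T_0},S^{0,0}(\R^d))$. Going back from $W$ to $u$ through Lemma \ref{lem:solregbis} in the limit case $l=m$ of the regularity accounting, in which the first row of the recovery matrix $\varUpsilon_n$ already consists of symbols of order $(0,0)$, produces the representation
\[u(t,x) = v_0(t,x) + \int_0^t\!\!\int_{\R^d} \Lambda(t,s,x,y)\gamma(s,y,u(s,y))\,dy\,ds + \int_0^t\!\!\int_{\R^d} \Lambda(t,s,x,y)\sigma(s,y,u(s,y))\dot{\Xi}(s,y)\,dy\,ds,\]
with $\Lambda(t,s,\cdot,\cdot)$ the Schwartz kernel of an operator $E_0(t,s)\in\caO(0,0)$ obtained as the corresponding strong limit of FIO matrices with amplitudes of order $(0,0)=(l-m,l-m)$. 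The deterministic term $v_0$ lies in $\bigcap_{j\geq 0}C^j([0,T_0], H^{z-j,\zeta-j}(\R^d))$, and the first (Bochner) integral belongs to $L^2([0,T_0]\times\Omega, H^{z,\zeta}(\R^d))$ by the Lipschitz hypothesis on $\gamma$.

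The stochastic term requires the analog of Lemma \ref{lem:weightedpesz} with $l=m$: each approximating FIO $E^{(N)}_0(t,s)$ satisfies
\[\bigl\|E^{(N)}_0(t,s)\sigma(s,w)\,\cdot\,\bigr\|^2_{L_2^0(\caH_\Xi,H^{z,\zeta}(\R^d))} \leq C_{t,s}^2(1+\|w\|_{z,\zeta})^2\int_{\R^d}\mu(d\xi),\]
since the weight $(1+|\xi+\eta|^2)^{l-m}=1$ collapses the spectral side to the compatibility condition \eqref{eq:measinv}. The constant $C_{t,s}$ is bounded uniformly in $N$ because, exactly as in the proof of Lemma \ref{lem:weightedpesz}, it depends on the amplitudes of $E^{(N)}_0(t,s)$ only through a single $S^{0,0}$-seminorm of finite order (Theorems \ref{thm:sobcont} and \ref{thm:SGAF}), controlled in turn by the multi-product estimate \eqref{eq:estsna}. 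Passing to the strong limit in $N$ transfers the same bound to $E_0(t,s)$, so the $H^{z,\zeta}(\R^d)$-valued stochastic integral of $E_0(t,\cdot)\sigma(\cdot,u(\cdot))$ against $\Xi$ is well-defined. Setting
\[\mathcal{T}u(t) := v_0(t) + \int_0^t E_0(t,s)\gamma(s,u(s))\,ds + \int_0^t E_0(t,s)\sigma(s,u(s))\,d\Xi(s),\]
one then verifies, by verbatim repetition of the computation in the proof of Theorem \ref{thm:semilinearweak} specialized to $l=m$, that $\mathcal{T}$ is Lipschitz on $L^2([0,T_0]\times\Omega, H^{z,\zeta}(\R^d))$ with constant bounded by $C\cdot T_0\bigl(1+\int\mu(d\xi)\bigr)$; choosing $T_0>0$ small enough it becomes a strict contraction on a small ball around $v_0$, and Banach's theorem produces the unique solution.

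The chief obstacle is the uniform-in-$N$ control of the Hilbert-Schmidt bound and its passage to the limit: it is precisely here that one needs the full strength of the multi-product composition result (Theorem \ref{thm:mainprbis}), rather than the single-phase Asada-Fujiwara $L^2$ continuity used in Theorem \ref{thm:semilinearweak}. A second, subtler point is the rigorous justification of the involutive analog of Proposition \ref{prop:mlpf} and Lemma \ref{lem:Dt}, which in the present generality does not follow directly from \cite{Coriasco:998.3} and is presumably the reason why the authors defer the detailed proof of Theorem \ref{thm:semilinearinvolutive} to a separate paper.
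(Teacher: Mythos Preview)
The paper does not actually prove Theorem \ref{thm:semilinearinvolutive}: it only sketches the strategy and explicitly defers the full argument to \cite{AAC17}. So your proposal should be compared against that sketch, not against a complete proof. Your overall four-step scheme matches the paper's intent, and your identification of the order-$(0,0)$ amplitudes (hence the condition \eqref{eq:measinv}) is exactly what the paper says. But your handling of the fundamental solution differs in a substantive way and contains a gap.

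\textbf{The difference.} The paper's plan is \emph{not} to work with the strong limit $E(t,s)=\lim_N E^{(N)}(t,s)$ from Theorem \ref{thm:fundsol}. Rather, it states that one needs a \emph{finite-sum} FIO representation of $E(t,s)$, modulo smoothing, obtained by transplanting Taniguchi's result \cite{taniguchi} into the $SG$ setting (this is the announced content of \cite{AAC17}). With such a finite sum in hand, Lemma \ref{lem:weightedpesz} applies termwise with $l=m$, exactly as in the proof of Theorem \ref{thm:semilinearweak}, and no limiting procedure is required.

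\textbf{The gap in your argument.} You instead propose to apply Lemma \ref{lem:weightedpesz} to each $E^{(N)}_0(t,s)$ and pass to the limit, claiming that the constant $C_{t,s}$ is uniform in $N$ because it ``depends on the amplitudes only through a single $S^{0,0}$-seminorm \dots\ controlled by the multi-product estimate \eqref{eq:estsna}.'' But \eqref{eq:estsna} bounds the seminorm of the composed amplitude by $C_\ell\prod_{j=1}^{M}\vvvert a_j\vvvert^{m_j,\mu_j}_{\ell'}$, and in the iterative construction of $E^{(N)}$ the number $M$ of factors grows with $N$. Nothing in \eqref{eq:estsna} prevents this product from diverging; a uniform bound would require an additional mechanism (e.g.\ factorial gains from iterated time integrals dominating the product growth), which you do not supply. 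This is precisely why the paper insists on a finite-sum representation rather than the limit. Moreover, the strong convergence $E^{(N)}\to E$ guaranteed by Theorem \ref{thm:fundsol} is at the level of $\scrL(H^{z,\zeta},H^{z,\zeta})$, not of the $S^{0,0}$-seminorms of the amplitudes, so one cannot simply pass to the limit inside the estimate of Lemma \ref{lem:weightedpesz}.

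\textbf{A minor misidentification.} Your closing remark locates the obstacle in the involutive analogue of Proposition \ref{prop:mlpf} and Lemma \ref{lem:Dt}. The paper, however, refers to \cite{Coriasco:998.2,Morimoto:1} for the reduction to a first order system in the involutive case and does not flag the factorization as problematic. What it singles out as the missing ingredient, and the reason for deferring the proof, is the $SG$ version of Taniguchi's finite-sum FIO representation of the fundamental solution.
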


\begin{remark}
	$\Xi$ defines a cylindrical Wiener process on $(\mathcal H_\Xi(\R^d),\langle\cdot,\cdot\rangle_{\mathcal H_\Xi(\R^d)})$ with values 
	in $H^{z,\zeta}$, by Lemma \ref{lem:weightedpesz}. 
\end{remark}

By the procedure explained in \cite{Coriasco:998.2}, see also \cite{Morimoto:1}, 
the Cauchy problem \eqref{cplin} is turned into an equivalent first order system
\eqref{sys} with real-valued, diagonal principal part. 
However, due to the failure of the ellipticity of the differences $\tau_j(t,x,\xi)-\tau_k(t,x,\xi)$, even in the sense of the
constant multiplicities, here we have no possibility to decouple (into blocks) the equations through a perfect diagonalizer
$\Op(\omega)$ as in Theorem \ref{thm:perfd}, and directly proceed, as in the case of 
scalar equations of order $1$, by means of Fourier integral operators. 

By Theorem \ref{thm:fundsol} from \cite{AleSandro},
we know that the fundamental solution of \eqref{sys} can be obtained as limit of a sequence of matrices of Fourier operators.
However, if we can write $E$ as a finite-sum expression of FIOs, also in the case of variable multiplicities and involutive characteristics, we can reduce to a system, construct formally its solution using its fundamental solution operator, go back to the scalar equation \eqref{eq:SPDE}, apply Banach fixed point theorem as done in the proof of Theorem \ref{thm:semilinearweak} and get existence of a unique solution $u\in L^2([0,T_0]\times\Omega, H^{z,\zeta}(\R^d))$. 
In \cite{AAC17} we are extending to the $SG$ case a result by Taniguchi \cite{taniguchi}, which allows to obtain $E$, in the case of involutive roots, through a finite sum of FIOs, modulo smoothing terms.

Another difference is that, in this case, we have no improvement in the decay and smoothness order
loss, as it is instead provided by the matrix-valued operators 
$\Op(\varUpsilon_j)$, $j=m$ or $j=n<m$, in the cases of strict hyperbolicity or of constant multiplicities, respectively. So, the symbols 
$c^1_p$ appearing
in the expected kernels of the \text{approximate solution operator} in the sense of \eqref{eq:kersolappbis}
will be of order $(0,0)$. This explains the more restrictive condition \eqref{eq:measinv}, which allows again to go through an
argument similar to those in the proof of Theorem \ref{thm:semilinearweak}.

The full proof of Theorem \ref{thm:semilinearinvolutive} requires a careful analysis of a number of technical details, 
to incorporate in the $SG$ theory the analog of the result by Taniguchi mentioned above, which will be achieved in \cite{AAC17}.
In order to keep the present exposition within a reasonable size, and not to heavily divert from the main objects of interests treated here, 
the proof of Theorem \ref{thm:semilinearinvolutive} will then appear elsewhere.

\subsection{Function-valued solutions and random-field solutions in the linear case.}\label{sec:comparison}
Consider now the special case of \eqref{cp}, with a $SG$-hyperbolic operator $L$ with constant multiplicities, where 
$\sigma(t,x,u(t,x))=\sigma(t,x)$ and $\gamma(t,x,u(t,x))=\gamma(t,x)$, $\gamma,\sigma\in C([0,T],H^{z,\zeta})$,
$z\ge0$, $\zeta>\frac{d}{2}$, $s\mapsto\caF(\sigma)(s)=\nu_s\in L^2([0,T], \caM_b)$, $\caM_b$ the space of
complex-valued measures with finite total variation. That is, we look at the Cauchy problem
\beqs\begin{cases}\label{cplinbis}
 Lu(t,x) = \gamma(t,x) + \sigma(t,x)\dot{\Xi}(t,x),\quad (t,x)\in(0,T]\times\R^d
 \\
 D_t^j u(0,x)=u_j(x),\quad x\in\R^d,\ 0\leq j\leq m-1,
\end{cases}
\eeqs 
for the linear SPDEs studied in \cite{ACSlinear}. 
Such (more restrictive) hypotheses imply 
$\gamma,\sigma\in \Lip(z,\zeta,r,\rho)\subset \Liploc(z,\zeta,r,\rho)$ for any $r,\rho\ge0$. In fact, recalling
Definition \ref{def:lip}, trivially:
\begin{itemize}
\item for every $w\in H^{z+r,\zeta+\rho}$, $t\in[0,T]$,
	$\|g(t,\cdot,w)\|_{z,\zeta}=\|g(t,\cdot)\|_{z,\zeta}\leq C(t)(1+\|w\|_{z+r,\zeta+\rho})$,
	with $C(t)=\|g(t,\cdot)\|_{z,\zeta}$;
\item for every $w,v\in H^{z+r,\zeta+\rho}$, $t\in[0,T]$,  
$\|g(t,\cdot,w)-g(t,\cdot,v)\|_{z,\zeta}\equiv 0 \leq C(t)\|w-v\|_{z+r,\zeta+\rho}$.
\end{itemize}
Applying Theorem \ref{thm:semilinearweak}, we obtain the existence and uniqueness of a function-valued solution for the linear
Cauchy problem \eqref{cplinbis}, which we here denote by $u_\fv$. 
Since in Theorem 4.12 of \cite{ACSlinear} we proved the existence and uniqueness of a random-field solution of
\eqref{cplinbis}, which we here denote by $u_\rf$, we now wish to compare it with $u_\fv$.

\begin{remark}\label{rem:comp}
	Notice that, in analogy with \eqref{eq:solslin}, $u_\rf$ satisfies
	\begin{equation}\label{eq:sollinbis}
		u_\rf(t,x)=v_0(t,x)+\int_0^t\int_{\R^d}\Lambda(t,s,x,y)\gamma(s,y)\,dyds +\int_0^t\int_{\R^d}\Lambda(t,s,x,y)\sigma(s,y)\dot{\Xi}(s,y)\,dyds.
	\end{equation}
	While the first two terms in the right-hand side of \eqref{eq:sollinbis} clearly coincide with the first two terms
	in the right-hand side of \eqref{eq:solslin}, the corresponding third, stochastic terms in \eqref{eq:solslin} and \eqref{eq:sollinbis}
	are defined in different ways. 
\end{remark}

We now prove that a random-field solution of \eqref{cplinbis} is also a function-valued solution.

\begin{proposition}\label{prop:cmp}
	Let $u_\rf$ and $u_\fv$ be the random-field solution
	and the function-valued solution of \eqref{cplinbis}, respectively, with $L$ $SG$-hyperbolic with constant multiplicities,
	$\gamma,\sigma\in C([0,T],H^{z,\zeta})$, $z\ge0$, $\zeta>\frac{d}{2}$, $s\mapsto\caF(\sigma)(s)=\nu_s\in L^2([0,T], \caM_b)$,
	$\caM_b$ the space of complex-valued measures with finite total variation.
	Then, $u_\rf=u_\fv=u$.
\end{proposition}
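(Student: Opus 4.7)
The strategy is to reduce the comparison to the stochastic integrals only, since under the assumed hypotheses on $\gamma$ and $\sigma$, the deterministic terms $v_0(t,x)$ and $\int_0^t\!\int_{\R^d}\Lambda(t,s,x,y)\gamma(s,y)\,dyds$ appearing in both \eqref{eq:solslin} and \eqref{eq:sollinbis} coincide identically. The nontrivial content is thus to show that, for every fixed $(t,x)\in[0,T_0]\times\R^d$,
\[
I_\rf(t,x) \;:=\; \int_0^t\!\int_{\R^d}\Lambda(t,s,x,y)\sigma(s,y)\,\dot\Xi(s,y)\,dyds \;=\; [I_\fv(t)](x),
\]
where $I_\fv(t)\in H^{z+m-l,\zeta}(\R^d)$ is the Hilbert-space-valued Wiener integral of $E_{l-m}(t,\cdot)\sigma(\cdot)$ against the cylindrical Wiener process on $\mathcal{H}_\Xi$ associated with $\Xi$. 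The right-hand side is well defined pointwise in $x$ via the Sobolev embedding $H^{z+m-l,\zeta}(\R^d)\hookrightarrow C^0(\R^d)$, valid under the assumption $\zeta>d/2$.

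First I would approximate $\sigma$ by a sequence of elementary deterministic processes of the form $\sigma_n(s,y)=\sum_{k=1}^{N_n}\mathbf{1}_{(s_{k-1}^{(n)},s_k^{(n)}]}(s)\,\phi_k^{(n)}(y)$, with $\phi_k^{(n)}\in C_0^\infty(\R^d)$, chosen so that $\sigma_n\to\sigma$ in $L^2([0,T_0],H^{z,\zeta}(\R^d))$. For such step-integrands, both stochastic integrals reduce to the same finite linear combination of Gaussian increments of $\Xi$, paired with the $\mathcal{H}_\Xi$-element $E_{l-m}(t,\cdot)\phi_k^{(n)}$, because the cylindrical Wiener process of Subsection \ref{subH} and the distribution-valued Gaussian process of Subsection \ref{subnoise} are realized on the \emph{same} underlying probability space with the \emph{same} covariance \eqref{eq:correlation}. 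Hence $I_\rf^{(n)}(t,x)=[I_\fv^{(n)}(t)](x)$ a.s.\ at the elementary level, by direct inspection.

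Second I would pass to the limit $n\to\infty$. On the Hilbert-space side, Lemma \ref{lem:weightedpesz} applied to $\sigma-\sigma_n$ together with the It\^o isometry \eqref{isomhilb} yields
\[
\E\,\|I_\fv(t)-I_\fv^{(n)}(t)\|_{z+m-l,\zeta}^2 \;\le\; C\!\int_0^{T_0}\!\|\sigma(s)-\sigma_n(s)\|_{z,\zeta}^2\,ds\cdot\sup_{\eta\in\R^d}\int_{\R^d}\frac{\mu(d\xi)}{(1+|\xi+\eta|^2)^{m-l}},
\]
which vanishes thanks to \eqref{eq:meascm} and to the chosen approximation. Composing with the continuous embedding into $C^0(\R^d)$ gives $[I_\fv^{(n)}(t)](x)\to[I_\fv(t)](x)$ in $L^2(\Omega)$, uniformly in $x$ on compact sets. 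On the random-field side, the analogous convergence $I_\rf^{(n)}(t,x)\to I_\rf(t,x)$ pointwise in $(t,x)$ in $L^2(\Omega)$ is precisely the energy estimate underlying the construction in \cite{ACSlinear}, governed by the same compatibility condition \eqref{eq:meascm}. Equating the two limits gives $I_\rf(t,x)=[I_\fv(t)](x)$ almost surely, which combined with the identification of deterministic terms yields $u_\rf=u_\fv$.

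The main obstacle I anticipate is the second step, namely making rigorous the identification of the two stochastic integrals on elementary processes when $\Lambda(t,s,\cdot,\cdot)$ is only a distribution. This is handled by using the explicit representation \eqref{eq:truekersolbis} of $\Lambda$ as the Schwartz kernel of the finite sum of $SG$ FIOs $E_{l-m}(t,s)$ (plus smoothing remainders), so that the pairing $\int \Lambda(t,s,x,y)\phi_k^{(n)}(y)\,dy$ is interpreted as $[E_{l-m}(t,s)\phi_k^{(n)}](x)$, a genuine element of $\mathcal{H}_\Xi$ against which $\Xi$ can be integrated in both frameworks. Once this identification is secured at the elementary level, the limiting procedure is routine.
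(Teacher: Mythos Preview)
Your approach is correct and conceptually equivalent to the paper's, but the paper takes a much shorter route. Rather than carrying out the elementary approximation of $\sigma$ and the two separate limiting procedures by hand, the paper simply observes (citing \cite{ACSlinear}) that $\Lambda\sigma\in\mathcal{P}_0$, the completion of the simple processes under the pre-inner product
\[
\langle f,g\rangle_0=\E\int_0^T\int_{\R^d}[\caF f(s)](\xi)\,\overline{[\caF g(s)](\xi)}\,\mu(d\xi)\,ds,
\]
and then invokes Proposition~3.12 of Dalang and Quer-Sardanyons \cite{dalangquer}, which asserts in general that for integrands in $\mathcal{P}_0$ the martingale-measure stochastic integral and the cylindrical-Wiener-process stochastic integral coincide. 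Your steps~2--4 amount to re-deriving that comparison result in this specific setting: approximate by elementary integrands, check equality there, and pass to the limit via the respective isometries. What you gain is self-containedness (no black-box appeal to \cite{dalangquer}) and an explicit identification at the pointwise level via the Sobolev embedding $H^{z+m-l,\zeta}\hookrightarrow C^0$; what the paper gains is brevity, delegating the entire approximation--limit mechanism to an existing general theorem. Neither route is deeper than the other, since both rest on the same density-plus-isometry principle.
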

\begin{proof}
	Our analysis in \cite{ACSlinear} shows that $\Lambda \sigma\in\mathcal{P}_0$, the completion of the class $\caE$ of simple processes
	via the pre-inner product (defined for suitable $f,g$) 
	\[
		\langle f,g\rangle_{0}= \E\bigg[\int_0^T\int_\Rd \big(f(s)\ast\tilde{g}(s)\big)(x)\,\Gamma(dx) ds \bigg] = 
		\E\bigg[\int_0^T\int_\Rd [\caF f(s)](\xi)\cdot\overline{[\caF g(s)](\xi)}\,\mu(d\xi) ds\bigg].
	\]
	By Proposition 3.12 in \cite{dalangquer},
	it follows that the stochastic integrals 
	of $\Lambda \sigma$ with respect to the martingale measure associated with $\dot{\Xi}$ (considered in Section 4 of \cite{ACSlinear}), and with respect to the
	cylindrical Wiener process 
	considered in Section \ref{sec:nonlin} are equal.
	This proves that $u_\rf=u_\fv=u$, as claimed.
\end{proof}

%
%
%


\begin{thebibliography}{10}

\bibitem{AAC17}
A. Abdeljawad, A. Ascanelli,  S. Coriasco.
{Multi-products of Fourier integral operators and fundamental solutions to hyperbolic systems with involutive characteristics on $\R^n$} (2018).
In preparation.

\bibitem{ACSlinear}
A. Ascanelli, S. Coriasco, A S\"u\ss.
\newblock{Random-field solutions of linear hyperbolic stochastic partial differential equations
with polynomially bounded coefficients} (2018). Submitted.

\bibitem{AC06}
A. Ascanelli, M. Cappiello.
{Log-Lipschitz regularity for $SG$ hyperbolic systems}.
{J. Differential Equations}, {230} (2006), 556--578.

\bibitem{AC10}
A. Ascanelli, M. Cappiello.
{The Cauchy problem for finitely degenerate hyperbolic equations with polynomial coefficients}. 
{Osaka J.Math.}, {47} (2010), n.2, 423--438.

\bibitem{AleSandro}
A. Ascanelli, S. Coriasco.
\newblock {Fourier Integral Operators Algebra and Fundamental Solutions to hyperbolic systems
with polynomially bounded coefficients on $\R^n$}
\newblock {\em Journal of Pseudo-Differential Operators and Applications}, 6 (2015), 521--565.

\bibitem{acs}
A. Ascanelli, S. Coriasco, A S\"u\ss.
\newblock {On temperate distributions decaying at infinity}
\newblock {Volume: {\em Generalized functions and Fourier analysis}, Series: "Operator Theory: Advances and Applications" (Editors: M.Oberguggenberger, J.Toft, J.Vindas, P.Wahlberg), Vol. 260 (2017), Birkh\"auser/Springer, pp. 1-18}.

\bibitem{alessiandre} 
A. Ascanelli, A. S\"u\ss.
\newblock {Random-field solutions to linear hyperbolic stochastic partial differential equations with variable coefficients.}
\newblock {Stochastic Processes and their Applications (2017)}
\newblock {https://doi.org/10.1016/j.spa.2017.09.019.}

%

\bibitem{Cicognani-Zanghirati:997.1}
M.~Cicognani, L.~Zanghirati.
\newblock {Analytic Regularity for Solutions of Nonlinear Weakly Hyperbolic
  Equations}.
\newblock {\em Boll. Un. Mat. Ital.}, (7) \textbf{11-B}:643--679, 1997.

\bibitem{Cicognani-Zanghirati:998.1}
M.~Cicognani, L.~Zanghirati.
\newblock {The Cauchy Problem for Nonlinear Hyperbolic Equations with Levi's
  Condition}.
\newblock {\em Bull. Sci. Math.}, 123: 413--435, 1999.

\bibitem{conusdalang}
D.~Conus, R.~C. Dalang.
\newblock The non-linear stochastic wave equation in high dimensions.
\newblock {\em Electronic Journal of Probability}, 13:629--670, 2008.

\bibitem{CorNicRod1} E. Cordero, F. Nicola, L. Rodino. {On the global
boundedness of Fourier integral operators}. Ann. Global Anal. Geom.,
{38} (2010), 373--398.

\bibitem{cordes}
H.~O.~Cordes. 
\newblock The Technique of Pseudodifferential Operators. 
\newblock Cambridge Univ. Press, 1995.

\bibitem{coriasco99}
S.~Coriasco.
\newblock Fourier integral operators in $SG$ classes I. Composition theorems
  and action on $SG$ Sobolev spaces.
\newblock {\em Rend. Sem. Mat. Univ. Pol. Torino}, 57, 4:249--302, 1999.

\bibitem{Coriasco:998.2}
S.~Coriasco.
\newblock {Fourier integral operators in $SG$ classes II. Application to $SG$
  hyperbolic Cauchy problems}.
\newblock {\em Ann. Univ. Ferrara}, 47:81--122, 1998.

\bibitem{CJT4} S. Coriasco, K. Johansson, J. Toft.
Global wave-front properties for Fourier integral operators and hyperbolic problems. 
\newblock{J. Fourier Anal. Appl.}, 22, 2:285-333, 2016.


\bibitem{Coriasco:998.3}
S.~Coriasco, L.~Rodino.
\newblock {Cauchy problem for $SG$-hyperbolic equations with constant
  multiplicities.}
\newblock {\em Ric. di Matematica}, 48, (Suppl.):25--43, 1999.


\bibitem{dalang}
R.~C. Dalang.
\newblock {Extending Martingale Measure Stochastic Integral with Applications
  to Spatially Homogeneous SPDEs}.
\newblock {\em Electronic Journal of Probability}, 4:1--29, 1999.


\bibitem{dalangquer}
R.~C. Dalang, L.~Quer-Sardanyons.
\newblock {Stochastic integral for spde's: a comparison}.
\newblock {\em Expositiones Mathematicae}, 29 :67--109, 2011.

\bibitem{dapratozabczyk}
G.~DaPrato, J.~Zabczyk.
\newblock {\em {Stochastic Equations in Infinite Dimensions}}.
\newblock Number~45 in Encyclopedia of Mathematics and its Applications.
  Cambridge University Press, 2008.



\bibitem{kumano-go}
H.~Kumano-go.
\newblock {\em {Pseudo-Differential Operators}}.
\newblock MIT Press, 1981.

\bibitem{ME} R. Melrose. \emph{Geometric scattering theory.} Stanford
Lectures. Cambridge University Press, Cambridge, 1995.

\bibitem{Mizohata:1}
S.~Mizohata.
\newblock {\em The Theory of Partial Differential Equations}.
\newblock Cambridge Univ. Press, 1973.

\bibitem{Mizohata:2}
S.~Mizohata.
\newblock {\em On the Cauchy Problem}.
\newblock Academic Press, Inc., 1985.


\bibitem{Morimoto:1}
Y.~Morimoto.
\newblock {Fundamental Solutions for a Hyperbolic Equation with Involutive
  Characteristics of Variable Multiplicity}.
\newblock {\em Comm. in Partial Differential Equations}, 4(6):609--643, 1979.

\bibitem{ObSc14}
M.~Oberguggenberger, M.~Schwarz.
\newblock {Fourier Integral Operators in Stochastic Structural Analysis}.
\newblock {Proceedings of the \em 12th International Probabilistic Workshop}, 2014.

\bibitem{oksendal}
B.~\O{}ksendal.
\newblock {Stochastic Differential Equations}.
\newblock {Springer}, 2010.

\bibitem{PA72}
C.~Parenti.
\newblock Operatori pseudodifferenziali in $\mathbb{R}^n$ e applicazioni.
\newblock {\em Ann. Mat. Pura Appl.}, 93:359--389, 1972.

\bibitem{peszat}
S.~Peszat.
\newblock {The Cauchy problem for a nonlinear stochastic wave equation in any
  dimension}.
\newblock {\em Journal of Evolution Equations}, 2(3):383--394, 2002.

\bibitem{RuSu} M. Ruzhansky, M. Sugimoto.
      {Global $L^2$ boundedness 
      theorems for a class of Fourier integral operators.} 
      Comm. Partial Differential Equations,
      {31} (2006), 547--569. 


\bibitem{sanzvuillermot}
M.~Sanz-Sol\'e, P.~A. Vuillermot.
\newblock {Equivalence and H{\"o}lder-Sobolev regularity of solutions for a
  class of non-autonomous stochastic partial differential equations}.
\newblock {\em Inst. H. Poincar{\'e} Probab. Statist.}, 39(4):703--742, 2003.


\bibitem{schwartz}
L.~Schwartz.
\newblock {\em {Th{\'e}orie des Distributions}}.
\newblock Hermann, 2nd edition, 2010.

\bibitem{taniguchi}
K. Taniguchi.
\newblock {Multi-products of Fourier integral operators and the fundamental solution for a hyperbolic system with involutive characteristics}.
\newblock {\em Osaka J. Math.} 21 (1984), 169-224.
%
\bibitem{tindel}
S.~Tindel.
\newblock Spdes with pseudo-differential generators: the existence of a
  density.
\newblock {\em Applicationes Matematicae}, (27):287--308, 2000.

\bibitem{walsh}
J.~B. Walsh.
\newblock {\em {{\'E}cole d'{\'e}t{\'e} de Probabilit{\'e}s de Saint Flour XIV,
  1984}}, volume 1180 of {\em {Lecture Notes in Math}}, chapter {An
  Introduction to Stochastic Partial Differential Equations}.
\newblock Springer, 1986.

\end{thebibliography}
\end{document}